\definecolor{darkgreen}{rgb}{0.0, 0.5, 0.0} 
\providecommand{\tria}{\mathcal{T}}
\providecommand{\dx}{\,\mathrm{d}x}
\providecommand{\dt}{\,\mathrm{d}t}
\providecommand{\ds}{\,\mathrm{d}s}
\providecommand{\dz}{\,\mathrm{d}z}
\providecommand{\dy}{\,\mathrm{d}y}
\providecommand{\diam}{\textup{diam}}
\providecommand{\faces}{\mathcal{F}}
\providecommand{\level}{\mathtt{lvl}}
\providecommand{\refine}{\mathtt{refine}}
\DeclareMathOperator{\Span}{span}
\DeclareMathOperator{\ran}{range}
\DeclareMathOperator{\meas}{meas}
\newcommand{\eps}{\varepsilon}
\DeclareMathOperator{\supp}{supp}
\newcommand{\nrm}{| \! | \! |}
\newcommand{\lnrm}{\mathopen{| \! | \! |}}
\newcommand{\rnrm}{\mathclose{| \! | \! |}}
\newcommand{\U}{\mathscr{U}}
\newcommand{\R}{\mathbb R}
\newcommand{\N}{\mathbb N}
\renewcommand{\P}{\mathbb P}
\newcommand{\cH}{\mathcal H}
\newcommand{\cJ}{\mathcal J}
\newcommand{\be}{\begin{equation}}
\newcommand{\ee}{\end{equation}}
\newcommand{\cL}{\mathcal L}
\newcommand{\Lis}{\cL\mathrm{is}}
\begin{document}

\author[L.\ Diening]{Lars Diening}
\author[R.\ Stevenson]{Rob Stevenson}
\author[J.\ Storn]{Johannes Storn}
\address[L.\ Diening]{Department of Mathematics, Bielefeld University, Postfach 10 01 31, 33501 Bielefeld, Germany}
\email{lars.diening@uni-bielefeld.de}
\address[R.\ Stevenson]{Korteweg-de Vries (KdV) Institute for Mathematics, University of Amsterdam, PO Box 94248, 1090 GE Amsterdam, The Netherlands}
\email{r.p.stevenson@uva.nl}
\address[J.\ Storn]{Faculty of Mathematics \& Computer Science, Institute of Mathematics, Leipzig University, Augustusplatz 10, 04109 Leipzig, Germany}
\email{johannes.storn@uni-leipzig.de}
\thanks{
The work of Lars Diening and Johannes Storn was supported by the Deutsche Forschungsgemeinschaft (DFG, German
Research Foundation) – SFB 1283/2 2021 – 317210226.}

\subjclass[2020]{
 	65D05,   	
65F08, 
  		65M12,  
		65M15,  
		 	65M50,  
		 	65M60
		 	}
\keywords{adaptive space-time FEM, local mesh refinement, quasi-optimality, optimal preconditioner, interpolation operator}

\title[Quasi-optimal FEM for parabolic problems]{A quasi-optimal space-time FEM with local mesh refinements for parabolic problems}

\begin{abstract}

We present a space-time finite element method for the heat equation that computes quasi-optimal approximations with respect to natural norms while incorporating local mesh refinements in space-time. 
The discretized problem is solved with a conjugate gradient method with a (nearly) optimal preconditioner. 
\end{abstract}

\maketitle

\section{Introduction}
Simultaneous space-time finite element methods have gained significant attention due to their capacity for highly parallel computations and their effectiveness in handling singularities via adaptive mesh refinement in space-time. The methods have demonstrated excellent performance in a variety of numerical experiments, as highlighted in \cite{VenetieWesterdiep21,LangerSchafelner22,DanwitzVoulisHostersBehr23}. Despite these promising results, several fundamental theoretical issues remain unresolved. This paper address these challenges, advancing both the theoretical understanding and practical applicability.

In particular, we present the first finite element method for the heat equation that computes quasi-optimal approximations with respect to natural norms, while incorporating local mesh refinements in space-time. Furthermore, we introduce a preconditioner that is optimal for problems with homogeneous initial condition $u_0 =0$ or $u_0 \in H^1_0(\Omega)$ and nearly optimal for initial conditions $u_0 \in L^2(\Omega)$. This is a significant advancement, as current optimal preconditioners are restricted to tensor-product-like grids, see for example \cite{Andreev16,NeumullerSmears19,StevensonWesterdiep21,VenetieWesterdiep21}. As illustrated in numerical experiments, the resulting scheme outperforms existing space-time finite element methods such as the least-squares \cite{FuehrerKarkulik21,GantnerStevenson21,GantnerStevenson23,GantnerStevenson24} and discontinuous Petrov--Galerkin \cite{DieningStorn22} method for highly singular solutions and, additionally, can be solved efficiently by a preconditioned conjugated gradient scheme.

Our method builds upon a variational formulation employing fractional trial and test spaces, as seen in \cite{SchwabStevenson17,SteinbachZank20}. 
Since this formulation requires initial data $u_0\in H^1_0(\Omega)$, we introduce a modified formulation that allows for non-matching initial data $u_0 \in L^2(\Omega)$. 
We approximate the solution to the resulting variational problem by a minimal residual method, adhering to the abstract framework outlined in \cite{MonsuurStevensonStorn23}.
Our ansatz leads to the following three key challenges:
\begin{enumerate}[leftmargin=25pt]
\item \textbf{Quasi-optimality:} Designing a Fortin operator to ensure stability.\label{itm:a}
\item \textbf{Fractional norm evaluation:} Replacing impractical evaluations of fractional norms with an efficient preconditioner. \label{itm:b}
\item \textbf{Error control:} Deriving estimators for both adaptive mesh refinement and the iteration error.\label{itm:c}
\end{enumerate}
\noindent 
We overcome these challenges as follows. 

\textit{Ad \ref{itm:a}.}
We construct the Fortin operator by applying the standard Fortin trick, which involves defining a correction operator using bubble functions to ensure the annihilation property. However, this operator is not stable on its own, necessitating its combination with a Scott--Zhang-type interpolation operator with optimal localized approximation properties. 
According to the regularity of the underlying test space, cf.~Lemma~\ref{lem:ParaPoincareH12}, these approximation properties require finite element spaces with underlying partitions $\tria$ of the time-space cylinder $Q$ with parabolically scaled time-space cells $K = K_t \times K_x \in \tria$, that is, the length of the time interval $K_t$ scales with the squared diameter of $K_x$ in the sense that 
\begin{equation*}
|K_t| \eqsim \text{diam}(K_x)^2.
\end{equation*} 

\textit{Ad \ref{itm:b}.}
To bypass the need for directly evaluating fractional norms, we design a multi-level preconditioner. This preconditioner transforms the quadratic minimization problem into a form that can be efficiently solved using the preconditioned conjugate gradient method with optimal preconditioner for initial conditions $u_0 \in H^1_0(\Omega)$ and nearly optimal preconditioner for $u_0 \in L^2(\Omega)$. 

\textit{Ad \ref{itm:c}.}
The minimal residual method naturally generates local error contributions that drive adaptive mesh refinements. We also introduce an error estimator that monitors the distance of the current iterate from the exact discrete solution within the preconditioned conjugate gradient scheme. The combination of both allows to drive efficient adaptive schemes.\\

\textit{Outline.}
Section~\ref{sec:VarForm} introduces variational formulations of the heat equation, including a novel formulation in fractional norms for non-matching initial data $u_0\in L^2(\Omega)$. Section~\ref{subsec:discreteSetting} describes the minimal residual method at an abstract level. Section~\ref{sec:Discretization} defines the finite element spaces and the partitioning $\tria$ of the time-space domain $Q$. The construction and analysis of the Fortin and interpolation operators \ref{itm:a} are presented in Section~\ref{sec:Interpol}. Section~\ref{sec:preCond} focuses on the preconditioner design \ref{itm:b}, and Section~\ref{sec:SolvProb} discusses the optimality of the preconditioned conjugate gradient scheme and the error estimators \ref{itm:c}. 
We conclude our studies with numerical experiments in Section~\ref{sec:NumExp}.
\section{Variational formulation}\label{sec:VarForm}
In this section we derive and discuss variational formulations of the following parabolic problem.
Let $\mathcal{J} = (0,T)$ be a finite time interval, let $\Omega \subset \mathbb{R}^d$ be a bounded Lipschitz domain, and let $Q \coloneqq \mathcal{J} \times \Omega$ be the time-space cylinder.
Given initial data $u_0\colon \Omega \to \mathbb{R}$ and a source term $f\colon Q \to \mathbb{R}$, we seek $u\colon Q \to \mathbb{R}$ with
\begin{equation}\label{eq:Heat}
\begin{aligned}
\partial_t u - \Delta_x u &= f&&\quad\text{in }Q,\\
u &= 0&&\quad\text{on }\mathcal{J} \times \partial \Omega,\\
u(0,\bigcdot) &= u_0&&\quad \text{in }\Omega.
\end{aligned}
\end{equation}

More generally, we consider the following problem. 
Let $K$, $H$ be separable Hilbert spaces (for convenience over $\R$) with dense and compact embedding $K \hookrightarrow H$. Identifying $H$ with its dual, we obtain the Gelfand triple $K \hookrightarrow H \simeq H' \hookrightarrow K'$. 
Let $a(t;\bigcdot,\bigcdot)$ be a bilinear form on $K \times K$ such that $t \mapsto a(t;u,v)$ is for all $u,v\in K$ measurable. We assume that the bilinear form $a(t;\bigcdot,\bigcdot)$ is for almost all times $t \in \cJ$ bounded and coercive, that is,
\begin{equation}\label{eq:normEquia}
\begin{aligned}
|a(t;u,v)| &\lesssim \|u\|_K \|v\|_K &\quad& \text{for all }u,v\in K,\\
a(t;u,u) &\gtrsim \|u\|_K^2&& \text{for all }u\in K.
\end{aligned}
\end{equation}
We define the linear mapping $A(t) \in \cL(K,K')$ by $(A(t)\eta)(\zeta)\coloneqq a(t;\eta,\zeta)$ for $\eta,\zeta\in K$ and almost all $t\in \cJ$. Given initial data $u_0 \in H$ and $f\colon \mathcal{J} \to K'$, the resulting evolutionary problem seeks $u\colon \mathcal{J} \to K$ with
\begin{equation}\label{eq:EQabst}
\begin{aligned}
\partial_t u (t)+A(t)u(t)& = f(t) \qquad \text{for almost all }t\in \cJ,\\
u(0) & = u_0.
\end{aligned}
\end{equation}
We use $\langle\bigcdot,\bigcdot\rangle$ both to denote the scalar product on $H \times H$ and its extension to a duality pairing on $K' \times K$. Moreover, we set for trial and test functions $v$ and $w$ the operator
\begin{equation}\label{eq:defB}
(Bv)(w) \coloneqq \int_\cJ \langle \partial_t v(t),w(t)\rangle+a(t;v(t),w(t))\dt.
\end{equation} 
With the first equation in \eqref{eq:EQabst} in variational form, we seek $u \colon  \mathcal{J} \to K$ with $u(0) = u_0 \in H$ and
\begin{equation*}
b(u,v) \coloneqq (Bu)(v) = \int_\mathcal{J} f(t) v(t)\dt.
\end{equation*}
In other words, with $\gamma_0 u\coloneqq u(0)$ we seek  the solution to the operator equation
\begin{equation}\label{eq:defBe}
B_e u\coloneqq (Bu,\gamma_0 u)=(f,u_0).
\end{equation}
In the remainder of this section we introduce and discuss suitable interpretations of the operator equation above.
\begin{remark}[G\aa rding inequality]
As for example illustrated in \cite[Rem.~65.5]{ErnGuermond21c}, it is possible to replace the second assumption in \eqref{eq:normEquia} by the G\aa rding inequality, reading with some arbitrary constant $\lambda \geq 0$
\begin{equation*}
\|u\|_K^2 \lesssim a(t;u,u) + \lambda\, \lVert u \rVert_H^2\qquad \text{for all }u\in K.
\end{equation*}
\end{remark}
\subsection{Classical setting}\label{subsec:ClassicalSetting}
The classical setting involves the Bochner spaces
\begin{align*}
U_C & \coloneqq L^2(\mathcal{J};K) \cap H^1(\mathcal{J},K'),\\
V_C & \coloneqq L^2(\mathcal{J};K).
\end{align*}
We consider the operator $B$ as a mapping from $U_C$ into the dual $V_C' \eqsim L^2(\cJ;K')$ of $V_C$. 
The following existence result is proven in \cite[Thm.~6.6]{ErnGuermond04} and \cite[Thm.~5.1]{SchwabStevenson09}, see also \cite[Chap.~IV, Sec.~26]{Wloka82} and \cite[Chap.~XVIII, Sec.~3]{DautrayLions92}.
\begin{theorem}[Well-posedness of classical setting]\label{thm:classicalEx}
The operator $B_e$ in \eqref{eq:defBe} is a linear isomorphism from $U_C$ into the dual space $(V_C \times H)' \eqsim V_C'\times H$, that is, 
\begin{equation*}
B_e \in \Lis\big(U_C,V_C'\times H\big).
\end{equation*} 
The upper bound for the norm of the operator and that of its inverse only depend on the hidden constants in \eqref{eq:normEquia}, and on $T$ when it tends to zero.
\end{theorem}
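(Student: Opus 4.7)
The plan is to verify three claims: (i) $B_e$ is bounded $U_C \to V_C' \times H$; (ii) it satisfies an a priori estimate $\|u\|_{U_C} \lesssim \|B_e u\|_{V_C' \times H}$, which yields injectivity and boundedness of the inverse on its range; (iii) it is surjective. Boundedness is routine: the first component $\|Bu\|_{V_C'}$ is bounded by Cauchy--Schwarz in time applied to the definition \eqref{eq:defB} together with the continuity half of \eqref{eq:normEquia}, while the trace estimate $\|\gamma_0 u\|_H \lesssim \|u\|_{U_C}$ follows from the continuous embedding $U_C \hookrightarrow C(\overline{\mathcal{J}};H)$. This embedding is classical and rests on the chain-rule identity $\frac{d}{dt}\|u(t)\|_H^2 = 2\langle \partial_t u(t), u(t)\rangle$ valid for $u \in U_C$, which is itself proved by density of smooth functions in $U_C$.

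The heart of the argument is (ii). Integrating the chain-rule identity and testing $(Bu)$ against $u$ itself produces the energy equation
\begin{equation*}
\tfrac12 \|u(T)\|_H^2 - \tfrac12 \|u_0\|_H^2 + \int_\mathcal{J} a(t;u,u)\dt = (Bu)(u).
\end{equation*}
Invoking the coercivity bound from \eqref{eq:normEquia} and a Young inequality yields $\|u\|_{L^2(\mathcal{J};K)}^2 + \|u(T)\|_H^2 \lesssim \|Bu\|_{V_C'}^2 + \|u_0\|_H^2$, with constants depending only on the hidden constants of \eqref{eq:normEquia}. I would then read off $\partial_t u = f - A(\cdot)u$ from \eqref{eq:defB} and control $\|\partial_t u\|_{L^2(\mathcal{J};K')}$ by $\|Bu\|_{V_C'}$ plus $\|u\|_{L^2(\mathcal{J};K)}$ using continuity of $A(t)$, closing the estimate $\|u\|_{U_C} \lesssim \|Bu\|_{V_C'} + \|u_0\|_H$. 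The explicit $T$-dependence as $T \to 0$ announced in the statement is tracked through the constants appearing in the Young inequality and the subsequent rearrangement, where the weighting between $\|u\|_{L^2(\mathcal{J};K)}$ and $\|u\|_{C(\overline{\mathcal{J}};H)}$ degenerates.

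For surjectivity I would construct a solution by Galerkin approximation in a countable basis $(\varphi_k)_{k \in \mathbb{N}}$ of $K$ that is orthonormal in $H$, obtained from the spectral theorem applied to the compact embedding $K \hookrightarrow H$. Each finite-dimensional problem reduces to a system of linear ODEs with merely measurable, $L^1$-in-time coefficients, admitting a unique Carathéodory solution. The a priori estimate of step (ii) transfers verbatim to these Galerkin approximants, so a subsequence converges weakly in $U_C$; passing to the limit in the variational formulation and using weak continuity of $\gamma_0$ on $U_C$ shows that the limit satisfies $B_e u = (f, u_0)$. I expect the main obstacle to be this limit passage: the merely measurable time dependence of $a(t;\cdot,\cdot)$ forces testing against smooth, time-compactly-supported functions combined with dominated convergence, and the initial-datum convergence must be extracted from weak compactness of the Galerkin traces in $H$ rather than from strong convergence.
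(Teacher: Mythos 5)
Your argument is essentially correct, but note that the paper does not prove this theorem at all: it simply cites the classical literature (Ern--Guermond, Schwab--Stevenson, Wloka, Dautray--Lions), and what you have written is precisely the standard Lions-type proof that those references carry out --- boundedness, an energy-based a priori estimate giving injectivity and a bounded inverse, and surjectivity by Galerkin approximation with a spectral basis. So you are not taking a different route from the paper; you are filling in the argument the paper delegates. Two small points deserve attention. First, the dependence on $T$ as $T \to 0$ does not really come from the Young inequality in the coercivity estimate: the inverse bound $\|u\|_{U_C} \lesssim \|Bu\|_{V_C'} + \|u_0\|_H$ is uniform in $T$, and the degeneration sits in the \emph{forward} bound, namely in the trace estimate $\|u(0)\|_H \lesssim \|u\|_{U_C}$ (equivalently the embedding $U_C \hookrightarrow C(\overline{\mathcal J};H)$), whose constant blows up like $T^{-1/2}$ for short time intervals; you should locate it there. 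Second, when you say the a priori estimate ``transfers verbatim'' to the Galerkin approximants, the $L^2(\mathcal J;K')$-bound on $\partial_t u_m$ is the delicate part: it requires that the $H$-orthogonal projections onto the Galerkin spaces be uniformly bounded on $K$ (hence on $K'$), which is exactly what your choice of the spectral basis from the compact embedding $K \hookrightarrow H$ guarantees --- it is worth saying so explicitly, or alternatively one can forgo the uniform derivative bound, pass to the limit only in $L^2(\mathcal J;K)$, and recover $\partial_t u \in L^2(\mathcal J;K')$ from the limit equation itself. With these two clarifications your proof is complete and matches the cited classical results.
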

To date, to the best of the authors' knowledge, no numerical scheme has been developed that enables quasi-optimal finite element schemes with non-tensor meshes in the context of the functional setting outlined in Theorem~\ref{thm:classicalEx}. The difficulties associated with finite element approximations in this functional setting have been elucidated in \cite{StevensonStorn22}. Consequently, we employ the subsequent formulations.
\subsection{Fractional setting for homogeneous initial data}\label{subsec:FracSetting}
The lack of regularity with respect to time in $V_C$ causes difficulties in the design of Fortin operators and thus of inf-sup stable numerical schemes on non-tensor meshes.
Similarly observations apply to ultra-weak formulations like in \cite[Sec.~4]{TantardiniVeeser16} due to the trial space's lack of regularity with respect to time.
Further challenges result from the treatment of the dual norm $H^1(\cJ,K')$ on non-tensor meshes.
Consequently, we propose similar to \cite{SchwabStevenson17} and \cite{SteinbachZank20} the following ``almost symmetric'' framework that balances the regularity in the trial and test space and avoids dual norms. 

We define the Bochner spaces $H^1_{0,}(\cJ,H) \coloneqq \lbrace v \in H^1(\cJ,H) \colon v(0) = 0\rbrace$ and $H^1_{,0}(\cJ,H) \coloneqq \lbrace v \in H^1(\cJ,H) \colon v(T) = 0\rbrace$. Moreover, we set via real interpolation \cite{LionsPeetre64} the fractional Sobolev spaces
\begin{equation}\label{eq:InterpolSpaceTime}
\begin{aligned}
H^{1/2}(\mathcal{J};H) & \coloneqq [H^1(\mathcal{J};H),L^2(\mathcal{J};H)]_{1/2,2},\\
H^{1/2}_{0,}(\mathcal{J};H) & \coloneqq [H_{0,}^1(\mathcal{J};H),L^2(\mathcal{J};H)]_{1/2,2},\\
H^{1/2}_{,0}(\mathcal{J};H) & \coloneqq [H^1_{,0}(\mathcal{J};H),L^2(\mathcal{J};H)]_{1/2,2}.
\end{aligned}
\end{equation}
Set for all $v\in L^2(\cJ;H)$ the squared semi-norm 
\begin{equation*}
\lvert v \rvert_{H^{1/2}(\mathcal{J};H)}^2 \coloneqq \int_\mathcal{J} \int_\mathcal{J} \frac{\lVert v(t) - v(s)\rVert_H^2}{|t-s|^2} \dt\ds.
\end{equation*}
\begin{proposition}[Equivalent characterizations]\label{prop:equiNorms}
Let $v\in L^2(\cJ;H)$.
The norm in the first interpolation space in \eqref{eq:InterpolSpaceTime} is equivalent to the Sobolev-Slobodeckij norm 
\begin{align*}
\lVert v \rVert_{H^{1/2}(\mathcal{J};H)} \coloneqq \big(\lVert v \rVert_{L^2(\mathcal{J};H)}^2 + \lvert v \rvert_{H^{1/2}(\mathcal{J};H)}^2\big)^{1/2}.
\end{align*}
The norms in the second and third space in \eqref{eq:InterpolSpaceTime} are equivalent to the norms 
\begin{align*}
\lVert v \rVert_{H_{0,}^{1/2}(\mathcal{J};H)} & \coloneqq \Big(\lVert v \rVert_{H^{1/2}(\mathcal{J};H)}^2  + \int_{\cJ} \frac{\lVert v(t) \rVert_H^2}{t} \dt \Big)^{1/2}&&\text{and}\\
\lVert v \rVert_{H_{,0}^{1/2}(\mathcal{J};H)} & \coloneqq \Big(\lVert v \rVert_{H^{1/2}(\mathcal{J};H)}^2  + \int_{\cJ} \frac{\lVert v(t) \rVert_H^2}{T-t} \dt \Big)^{1/2},&&\text{respectively}.
\end{align*}
\end{proposition}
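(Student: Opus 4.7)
The plan is to establish the first (unconstrained) equivalence by a direct $K$-functional argument, and then to derive the two trace-vanishing equivalences from it via extension by zero.

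For the unconstrained space I would compute
\[
\lVert v\rVert_{[H^1,L^2]_{1/2,2}}^2\eqsim \int_0^\infty s^{-2}K(s,v)^2\ds,\quad K(s,v)^2=\inf_{v=v_0+v_1}\bigl(\lVert v_0\rVert_{L^2(\cJ;H)}^2+s^2\lVert v_1\rVert_{H^1(\cJ;H)}^2\bigr),
\]
and bound it against the Sobolev--Slobodeckij norm in both directions. For the upper bound I would choose $v_1\coloneqq \phi_s*\bar v$ as the mollification at scale $s$ of a bounded extension $\bar v$ of $v$ to $\R$, which yields $\lVert v-v_1\rVert_{L^2}^2\lesssim s^{-1}\int_0^s\lVert \tau_h v-v\rVert_{L^2}^2\,\mathrm{d}h$ and $s^2\lVert v_1\rVert_{H^1}^2\lesssim \lVert v\rVert_{L^2}^2$. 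Integration against $s^{-2}\ds$ and Fubini, together with the classical identity $\lvert v\rvert_{H^{1/2}(\cJ;H)}^2\eqsim \int_0^T h^{-2}\lVert \tau_h v-v\rVert_{L^2(0,T-h;H)}^2\,\mathrm{d}h$, give the upper bound. The converse follows from $\lVert \tau_h v-v\rVert_{L^2}\le 2K(h,v)$ applied to any admissible decomposition. All steps are pointwise in $H$ through the Bochner integral, so the scalar proof carries over verbatim.

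For the trace-vanishing spaces, by time reversal it suffices to treat $H^{1/2}_{0,}(\cJ;H)$. Let $\tilde E$ denote the zero extension to $(-\infty,T)$; it is bounded as $L^2(\cJ;H)\to L^2((-\infty,T);H)$ and as $H^1_{0,}(\cJ;H)\to H^1((-\infty,T);H)$, where the second mapping property uses precisely $v(0)=0$. Real interpolation together with the first equivalence on $(-\infty,T)$ gives $\lVert \tilde E v\rVert_{H^{1/2}((-\infty,T);H)}\lesssim \lVert v\rVert_{[H^1_{0,},L^2]_{1/2,2}}$. Splitting the Slobodeckij double integral of $\tilde E v$ into the four rectangles generated by $\cJ$ and $(-\infty,0)$ and evaluating $\int_{-\infty}^0 (t-s)^{-2}\ds = 1/t$ on the two cross-rectangles identifies this left-hand side with
\[
\lVert v\rVert_{L^2(\cJ;H)}^2+\lvert v\rvert_{H^{1/2}(\cJ;H)}^2+2\int_\cJ t^{-1}\lVert v(t)\rVert_H^2\dt,
\]
which proves the $\gtrsim$ direction of the stated weighted-norm equivalence.

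For the reverse direction I would construct, for any $v$ of finite weighted Slobodeckij norm, an admissible $K$-functional decomposition $v=v_0+v_1$ with $v_1\in H^1_{0,}(\cJ;H)$ using the one-sided mollification $v_1\coloneqq \phi_s*\tilde E v$ with $\supp \phi_s\subset [0,s]$, which automatically enforces $v_1(0)=0$. The extra $L^2$-approximation cost from mollifying across $t=0$ is, after integration against $s^{-2}\ds$, exactly the Hardy weight $\int_\cJ t^{-1}\lVert v(t)\rVert_H^2\dt$. The main technical obstacle is this last step: one must ensure that the decomposition simultaneously produces a good $K$-functional estimate and preserves the vanishing trace, and that the boundary contribution assembles into the weight $t^{-1}$ rather than a neighbouring weight, which reflects the critical scaling of $H^{1/2}$-traces at the endpoint.
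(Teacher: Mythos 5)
Your route is genuinely different from the paper's: the paper does not prove this proposition at all, it simply records it as classical and points to Lions--Magenes, whereas you sketch a self-contained proof via the $K$-functional, extension by zero and one-sided mollification. The architecture of your argument for the two constrained spaces is the right one and is consistent with the paper's own Remark on extension by zero: the direction ``interpolation norm controls the weighted norm'' via boundedness of $\tilde E$ on $L^2(\cJ;H)$ and on $H^1_{0,}(\cJ;H)$, interpolation, and the splitting of the Slobodeckij integral on $(-\infty,T)$ using $\int_{-\infty}^0 (t-s)^{-2}\ds = t^{-1}$ is exactly how the weight $t^{-1}$ arises; and your reverse construction $v_1=\phi_s*\tilde E v$ with $\supp\phi_s\subset[0,s]$ does enforce $v_1(0)=0$, with the boundary cost assembling, via $\int_t^\infty s^{-2}\ds=t^{-1}$, into the Hardy weight. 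What the citation buys is brevity; what your argument buys is a transparent explanation of why the weight appears exactly at the critical exponent $1/2$.

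There is, however, one step that fails as written, and it occurs in both halves of your sketch: the derivative estimate for the mollification. From $s^2\lVert v_1\rVert_{H^1}^2\lesssim\lVert v\rVert_{L^2}^2$ you cannot conclude anything after ``integration against $s^{-2}\ds$'', since $\int_0^1 s^{-2}\lVert v\rVert_{L^2(\cJ;H)}^2\ds$ diverges; the crude bound $\lVert(\phi_s*\bar v)'\rVert_{L^2}\lesssim s^{-1}\lVert v\rVert_{L^2}$ has no decay in $s$ and is useless precisely in the regime $s\to 0$ that decides membership in the interpolation space. The standard repair is to exploit $\int\phi_s'=0$ and write $(\phi_s*\bar v)'(t)=\int\phi_s'(h)\bigl(\bar v(t-h)-\bar v(t)\bigr)\,\mathrm{d}h$, which gives $s\lVert(\phi_s*\bar v)'\rVert_{L^2}\lesssim\sup_{0<h\le s}\lVert\tau_h\bar v-\bar v\rVert_{L^2}$ (or an averaged variant); after integration against $s^{-2}\ds$ this is controlled by the Slobodeckij seminorm, and in the constrained case by the seminorm of $\tilde E v$ on $(-\infty,T)$, i.e.\ by the weighted norm, which is what is needed. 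In addition, the $s$-integral must be split, taking the trivial decomposition $v_1=0$ for $s\gtrsim T$, since the mollification estimate is only useful for small $s$. With these two standard adjustments your sketch goes through.
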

\begin{proof}
This classical result can be found for example in \cite{LionsMagenes72}.
\end{proof}
\begin{remark}[Extension by zero]\label{rem:ExtByZero}
One can characterize the latter spaces in \eqref{eq:InterpolSpaceTime} via extensions by zero in the sense that any $v\in H^{1/2}_{0,}(\mathcal{J};H)$ can be seen as a function in $H^{1/2}(-\infty,T;L^2(\Omega))$ with $v|_{(-\infty,T]\times \Omega} = 0$, since such functions satisfy
\begin{equation}\label{eq:HalfSpaceToLocal}
\begin{aligned}
&\lVert v \rVert_{H^{1/2}(-\infty,T;H)}^2 = \lVert v \rVert_{H^{1/2}(\mathcal{J};H)}^2 + 2 \int_0^T \int_{-\infty}^0 \frac{\lVert v(t)\rVert_H^2}{|t-s|^2} \ds \dt \\
&\qquad = \lVert v \rVert_{H^{1/2}(\mathcal{J};H)}^2 + 2 \int_0^T \frac{\lVert v(t)\rVert_H^2}{t} \dt 
\eqsim \lVert v \rVert_{H^{1/2}_{0,}(\mathcal{J};H)}^2.
\end{aligned}
\end{equation}
A corresponding result holds for $H^{1/2}_{,0}(\mathcal{J};H)$.
\end{remark}
We set the fractional trial and test spaces
\begin{equation}\label{eq:DefUvV}
\begin{aligned}
U_F& \coloneqq L^2(\mathcal{J};K) \cap H^{1/2}_{0,}(\mathcal{J};H),\\ 
V_F& \coloneqq L^2(\mathcal{J};K) \cap H^{1/2}_{,0}(\mathcal{J};H).
\end{aligned}
\end{equation}
\begin{theorem}[Well-posedness of fractional setting]\label{thm:fractEx}
The operator $B$ in \eqref{eq:defB} is a linear isomorphism from $U_F$ into the dual space $V_F'$, that is, 
\begin{equation*}
B \in \Lis(U_F,V_F').
\end{equation*} 
The upper bound for the norm of the operator and that of its inverse only depend on the hidden constants in \eqref{eq:normEquia}. 
\end{theorem}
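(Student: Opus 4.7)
The plan is to apply the Banach-Nečas-Babuška theorem to $b(\cdot,\cdot)$ on $U_F\times V_F$, reducing the isomorphism claim to three verifications: \textbf{(a)} boundedness $|b(u,v)|\lesssim\|u\|_{U_F}\|v\|_{V_F}$, \textbf{(b)} an inf-sup bound $\|u\|_{U_F}\lesssim\sup_{0\neq v\in V_F} b(u,v)/\|v\|_{V_F}$, and \textbf{(c)} non-degeneracy in the test variable. The strategy follows \cite{SchwabStevenson17,SteinbachZank20}.

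For \textbf{(a)}, split $b=b_a+b_{\partial_t}$ according to \eqref{eq:defB}. The spatial part obeys $|b_a(u,v)|\lesssim\|u\|_{L^2(\cJ;K)}\|v\|_{L^2(\cJ;K)}$ by \eqref{eq:normEquia} and Cauchy-Schwarz. For the temporal part I extend $u$ by zero to $(-\infty,T]$ and $v$ by zero to $[0,\infty)$; by Remark~\ref{rem:ExtByZero} these extensions have $H^{1/2}$ norms equivalent to the $U_F$ and $V_F$ norms. Extending both by zero to all of $\R$ rewrites $\int_\cJ\langle\partial_tu,v\rangle\dt=\int_\R\langle\partial_tu,v\rangle\dt$, and the latter is bounded by $\|u\|_{H^{1/2}(\R;H)}\|v\|_{H^{1/2}(\R;H)}$ via the $H^{-1/2}(\R;H)\times H^{1/2}(\R;H)$ duality, itself obtained by interpolating between the obvious pairings on $H^1(\R;H)\times L^2(\R;H)$ and $L^2(\R;H)\times H^1(\R;H)$.

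The inf-sup \textbf{(b)} is the main obstacle. Following \cite{SteinbachZank20} I use the modified Hilbert transform $\mathcal H_T\colon L^2(\cJ)\to L^2(\cJ)$, acting only on the temporal variable, which restricts to an isomorphism $H^{1/2}_{0,}(\cJ;H)\to H^{1/2}_{,0}(\cJ;H)$ and satisfies the positivity identity $\int_\cJ\langle\partial_tu,\mathcal H_Tu\rangle\dt\eqsim|u|_{H^{1/2}(\cJ;H)}^2$. For $u\in U_F$ I test against a combination $v=\alpha_1\mathcal H_Tu+\alpha_2w\in V_F$ with $\alpha_i>0$ and an auxiliary correction $w$ designed so that the positivity identity controls the $H^{1/2}_{0,}$-contribution to $\|u\|_{U_F}$ from below, while the coercivity $\int_\cJ a(t;u,u)\dt\gtrsim\|u\|_{L^2(\cJ;K)}^2$ controls the $L^2(\cJ;K)$-contribution. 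The delicate step is balancing $\alpha_1$ and $\alpha_2$ and absorbing the indefinite cross term $\int_\cJ a(t;u,\mathcal H_Tu)\dt$ via boundedness of $\mathcal H_T$ on $L^2(\cJ;K)$ and Young's inequality. Finally, \textbf{(c)} follows by testing with $u\in C_c^\infty(\cJ;K)\subset U_F$: if $v\in V_F$ annihilates $b(\cdot,v)$, one obtains $-\partial_tv+A(t)^*v=0$ distributionally, and since $v$ has vanishing trace at $t=T$ in the sense of $H^{1/2}_{,0}(\cJ;H)$, the time-reversed analogue of Theorem~\ref{thm:classicalEx} forces $v=0$.
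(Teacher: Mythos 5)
Note first that the paper does not actually prove this theorem: it only cites \cite{BaiocchiBrezzi83,Fontes09,SchwabStevenson17,SteinbachZank20}. So the question is whether your sketch would stand as a self-contained proof, and it would not: the gap is in your step \textbf{(b)}, which is the heart of the matter. Your test function must lie in $V_F$, so the natural correction $w=u$ is inadmissible ($u\in U_F$ does not vanish at $t=T$), and your proposal never says what $w$ is. Even granting a $w\in V_F$ with $\int_\cJ a(t;u(t),w(t))\dt\gtrsim\|u\|_{L^2(\cJ;K)}^2$ and $\int_\cJ\langle\partial_t u,w\rangle\dt\ge 0$, the plan of ``absorbing the cross term by Young'' fails: since $\mathcal H_T$ acts only in time and is an $L^2(\cJ)$-isometry, the indefinite term $\int_\cJ a(t;u,\mathcal H_T u)\dt$ is of size $\eqsim\|u\|_{L^2(\cJ;K)}^2$, i.e.\ exactly the same order as the coercive gain, and no choice of $\alpha_1,\alpha_2$ can absorb it without a sign or smallness property. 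In \cite{SteinbachZank20} this is resolved only for the autonomous, self-adjoint case $a(u,v)=\langle\nabla_x u,\nabla_x v\rangle$, by proving $\int_\cJ\langle\nabla_x u,\nabla_x\mathcal H_T u\rangle\dt\ge 0$ through simultaneous diagonalization (spatial eigenfunctions combined with the sine--cosine expansion defining $\mathcal H_T$); that argument is unavailable for nonsymmetric, time-dependent $a(t;\cdot,\cdot)$ satisfying only \eqref{eq:normEquia}, which is the generality the theorem claims (constants depending only on \eqref{eq:normEquia}). For that generality the known proofs proceed differently: by interpolation between the classical setting and a dual/ultraweak one \cite{SchwabStevenson17}, or by Fourier and half-line techniques \cite{BaiocchiBrezzi83,Fontes09}. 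Your step \textbf{(c)} (injectivity of the adjoint via the backward problem and Theorem~\ref{thm:classicalEx}) is essentially fine modulo standard trace details.

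There is also a smaller, repairable slip in \textbf{(a)}: Remark~\ref{rem:ExtByZero} justifies extension by zero only across the endpoint where the function vanishes. Extending $u\in H^{1/2}_{0,}(\cJ;H)$ by zero \emph{beyond} $T$ (and $v\in H^{1/2}_{,0}$ by zero before $0$) is not bounded in $H^{1/2}(\R;H)$ -- it would require control of $\int_\cJ\|u(t)\|_H^2/(T-t)\dt$, which $\|u\|_{U_F}$ does not provide -- so the claimed bound $\|u\|_{H^{1/2}(\R;H)}\lesssim\|u\|_{U_F}$ is false as stated. Either use a bounded (e.g.\ reflection) extension across the endpoint carrying no boundary condition, noting that the identity $\int_\R\langle\partial_t\tilde u,\tilde v\rangle\dt=\int_\cJ\langle\partial_t u,v\rangle\dt$ persists because on each exterior interval one of the two factors vanishes, or avoid extensions altogether and interpolate $\partial_t\in\cL(H^1_{0,}(\cJ),(L^2(\cJ))')$ with $\partial_t\in\cL(L^2(\cJ),(H^1_{,0}(\cJ))')$, exactly as the paper does in the proof of Proposition~\ref{prop:2}, estimate \eqref{eq:b}.
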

\begin{proof}
For $T=\infty$, a proof of this result can be found in \cite[Thm.~2.2]{BaiocchiBrezzi83} and in \cite[Thm.~4.3]{Fontes09}.
Proofs for $T<\infty$ can be found in \cite[Cor.~3.9]{SchwabStevenson17} using interpolation spaces, and in \cite[Thm.~3.2]{SteinbachZank20} using a Hilbert transformation.
\end{proof}
In contrast to the formulation presented in Theorem~\ref{thm:classicalEx}, the formulation of this theorem features trial and test spaces with balanced regularity and without dual norms. This aspect is advantageous for the design of finite element methods that utilize non-tensor spaces. 
It requires, however, homogeneous initial data.
\subsection{Fractional setting with inhomogeneous initial data}\label{subsec:MixedProb}
As we have seen, finding $u \in U_F$ such that $B u=f$ with $f \in V_F'$ is a well-posed variational formulation of the parabolic problem \eqref{eq:EQabst} with homogeneous initial data $u_0=0$.
When having non-zero initial data $u_0$, an option is to extend it to a function $\bar{u}\colon \cJ \rightarrow K$, and to compute $u-\bar{u} \in U_F$ with $B (u-\bar{u})=f - B \bar{u}$, assuming that $B\bar{u} \in V_F'$. When $u_0 \in K$, it can be trivially extended to $\bar{u} \in H^1(\cJ;K)$, so that $B\bar{u} \in V_F'$.
For $u_0 \not\in K$, the task of constructing such a function $\bar{u}$ is non-trivial. We circumvent this challenge by introducing the following variational formulation that performs this task on the fly.
This formulation involves the spaces 
\begin{align*}
U & \coloneqq U_F + U_C = \big(L^2(\cJ;K) \cap H_{0,}^{1/2}(\cJ;H)\big) + \big(L^2(\cJ;K) \cap H^{1}(\cJ;K')\big),\\
V & \coloneqq V_F = L^2(\cJ;K) \cap H^{1/2}_{,0}(\cJ;H).
\end{align*}
The squared norm in $U$ reads 
\begin{equation*}
\lVert u \rVert_U^2 \coloneqq \inf_{u = u_1 + u_2} \big(\lVert u_1 \rVert_{U_F}^2 + \lVert u_2 \rVert_{U_C}^2\big) \qquad\text{for all }u\in U.
\end{equation*}
Additionally, we set the space $H^1_{0,}(\cJ;K') \coloneqq \lbrace v \in H^1(\cJ;K') \colon v(0) = 0\rbrace$. 

\begin{lemma}[Embedding]\label{lem:Embedding}
One has 
\begin{enumerate}
\item the embedding $L^2(\cJ;K) \cap H^1_{0,}(\cJ;K') \hookrightarrow U_F$, and \label{itm:embed}
\item the identity $\lbrace u \in U\colon u(0) = 0\rbrace = U_F$ with equivalent norms. \label{itm:ident}
\end{enumerate}
\end{lemma}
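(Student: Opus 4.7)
The plan is to prove part~(1) by extending $u$ by zero across $t=0$ and invoking the classical Lions--Magenes intermediate-derivative embedding, then to deduce part~(2) from part~(1) via the defining sum structure of $U$.

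\emph{Part~(1).} I would take $u \in L^2(\cJ;K) \cap H^1_{0,}(\cJ;K')$ and let $\tilde u$ denote its extension by zero to the negative time axis. Since $H^1(\cJ;K') \hookrightarrow C(\bar\cJ;K')$, the trace $u(0) = 0$ is well-defined in $K'$, so the extension acquires no jump at $t=0$ and satisfies $\tilde u \in L^2(-\infty,T;K) \cap H^1(-\infty,T;K')$ with $\lVert \tilde u \rVert_{L^2(-\infty,T;K)} = \lVert u\rVert_{L^2(\cJ;K)}$ and analogously for the $H^1(K')$-norm. The classical Lions--Magenes embedding $L^2(I;K)\cap H^1(I;K')\hookrightarrow H^{1/2}(I;H)$, applied on $I=(-\infty,T)$ (or equivalently on any sufficiently large bounded subinterval), then yields $\tilde u \in H^{1/2}(-\infty,T;H)$ with $\lVert\tilde u\rVert_{H^{1/2}(-\infty,T;H)}^2 \lesssim \lVert u\rVert_{L^2(\cJ;K)}^2 + \lVert u\rVert_{H^1(\cJ;K')}^2$. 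The identity of Remark~\ref{rem:ExtByZero} (together with Proposition~\ref{prop:equiNorms}) transfers this to the weighted norm $\lVert u\rVert_{H^{1/2}_{0,}(\cJ;H)}$, proving the embedding.

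\emph{Part~(2).} First I would verify that the trace at $t=0$ is well-defined on $U$. Given two decompositions $u=u_1+u_2=u_1'+u_2'$ with $u_i\in U_F$, $u_i'\in U_C$, the difference $w\coloneqq u_2-u_2'=u_1'-u_1$ lies in $U_F\cap U_C$; membership in $U_C\hookrightarrow C(\bar\cJ;H)$ makes $w(0)\in H$ meaningful, while the Hardy-type characterization of $H^{1/2}_{0,}$ from Proposition~\ref{prop:equiNorms} forces $\int_\cJ \lVert w(t)\rVert_H^2/t\dt < \infty$, which combined with $H$-continuity at $0$ implies $w(0)=0$. Hence $u(0)\coloneqq u_2(0)$ is decomposition-independent, and the inclusion $U_F\subseteq\{u\in U\colon u(0)=0\}$ follows from the trivial splitting $u=u+0$. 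Conversely, if $u\in U$ satisfies $u(0)=0$, then for any decomposition $u=u_1+u_2$ one has $u_2(0)=0$ in $H$, hence in $K'$, so $u_2\in L^2(\cJ;K)\cap H^1_{0,}(\cJ;K')$. Part~(1) gives $u_2\in U_F$ with $\lVert u_2\rVert_{U_F}\lesssim\lVert u_2\rVert_{U_C}$, whence $u\in U_F$ and $\lVert u\rVert_{U_F}^2\lesssim \lVert u_1\rVert_{U_F}^2+\lVert u_2\rVert_{U_C}^2$. Taking the infimum over decompositions yields $\lVert u\rVert_{U_F}\lesssim\lVert u\rVert_U$; the reverse bound is again immediate from the trivial splitting.

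\emph{Main obstacle.} The delicate step lies in part~(1): confirming that the zero-extended function inherits the correct weighted $H^{1/2}_{0,}$ regularity really requires the Hardy-weight characterization of Proposition~\ref{prop:equiNorms} together with the extension-by-zero identity of Remark~\ref{rem:ExtByZero}. That same characterization is also what makes the trace at $t=0$ well-defined on the sum space $U$—the critical observation that any element of $U_F\cap U_C$ vanishes at $t=0$—which is precisely what reduces part~(2) to a direct application of part~(1).
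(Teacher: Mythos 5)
Your proof is correct, but part~(1) takes a genuinely different route from the paper. You obtain the embedding by extending $u\in L^2(\cJ;K)\cap H^1_{0,}(\cJ;K')$ by zero across $t=0$ (legitimate because the $K'$-valued trace vanishes), applying the classical Lions--Magenes intermediate-derivative embedding $L^2(I;K)\cap H^1(I;K')\hookrightarrow H^{1/2}(I;H)$ on $I=(-\infty,T)$, and then translating back through the zero-extension identity of Remark~\ref{rem:ExtByZero} and the Hardy-weight characterization in Proposition~\ref{prop:equiNorms}. The paper instead diagonalizes with the eigenbases of the spatial operator $A_x$ and the temporal operator $A_t$ and shows the sharper statement $U_F\simeq[L^2(\cJ;K)\cap H^1_{0,}(\cJ;K'),L^2(\cJ;K)]_{1/2,2}$, from which the embedding is immediate; that interpolation identification is stronger than what the lemma asks for and is reused later (e.g.\ for the right inclusion in Proposition~\ref{prop:2}), whereas your argument is more elementary, avoids spectral expansions, and delivers exactly the one-sided embedding needed here. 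Your part~(2) follows essentially the paper's argument (apply part~(1) to the $U_C$-component of an arbitrary decomposition, then take the infimum, with the trivial splitting for the reverse bound); in fact you add a detail the paper leaves implicit, namely that $u(0)$ is well defined on the sum space $U$ because any element of $U_F\cap U_C$ is $H$-continuous up to $t=0$ and the Hardy term $\int_\cJ \lVert w(t)\rVert_H^2/t\,\mathrm{d}t<\infty$ forces its trace to vanish. The only blemishes are cosmetic: the sentence introducing the two decompositions mislabels which components lie in $U_F$ versus $U_C$, and the parenthetical ``any sufficiently large bounded subinterval'' should be justified by noting that the interaction with $(-T,0)$ already produces a weight comparable to $1/t$ on $(0,T)$; neither affects the argument.
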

\begin{proof}[Proof of \ref{itm:embed}]
Define for all $u,v\in K$ the operator $(A_x u)(v)\coloneqq \langle u,v\rangle_K$. We restrict its domain to 
\begin{equation*}
D(A_x)\coloneqq \Big\{u \in K\colon \sup_{0 \neq v \in K} \frac{(A_x u)(v)}{\|v\|_H}<\infty\Big\}.
\end{equation*}
This operator $A_x$ is a self-adjoint, densely defined, unbounded operator on $H$.
The set of normalized eigenvectors $(\phi_n)_{n \in \mathbb{N}} \subset K$ of $A_x$, with corresponding eigenvalues $(\lambda_n)_{n\in \mathbb{N}} \subset \mathbb{R}_{>0}$, forms an orthonormal basis for $H$ and 
$(\lambda_n^{-1} \phi_n)_{n\in \mathbb{N}}$ forms an orthonormal basis for $K$. 
Similarly, we set $(A_t u)(v)\coloneqq \langle u,v\rangle_{H^1(\cJ)}$ for all $u,v \in H_{0,}^1(\cJ)$ with restricted domain 
\begin{equation*}
D(A_t)\coloneqq \Big\{u \in H_{0,}^1(\cJ)\colon \sup_{0 \neq v \in H_{0,}^1(\cJ)} \frac{(A_t u)(v)}{\|v\|_{L^2(\cJ)}}<\infty\Big\}.
\end{equation*}
The set of normalized eigenvectors $(\psi_n)_{n\in \mathbb{N}} \subset L^2(\cJ)$ of $A_t$, with corresponding eigenvalues $(\mu_n)_{n\in \mathbb{N}} \subset \mathbb{R}_{>0}$, forms an orthonormal basis for $L^2(\cJ)$ and $(\mu_n^{-1} \psi_n)_{n\in \mathbb{N}}$ forms an orthonormal basis for $H_{0,}^1(\cJ)$. 
By $\frac12(\eta^{1/2}+\zeta^{1/2}) \leq (\eta+\zeta)^{1/2}\leq \eta^{1/2}+\zeta^{1/2}$ for non-negative numbers $\eta,\zeta\geq 0$,
we obtain for $u=\sum_{n,m \in \mathbb{N}} u_{n,m} \psi_n \otimes \phi_m$ the equivalence of norms
\begin{align*}
&\|u\|^2_{L^2(\cJ;K) \cap H_{0,}^{1/2}(\cJ;H)} =
\sum_{n,n} (\lambda_m +\mu_n^{1/2})\, |u_{n,m}|^2 \eqsim \sum_{n,n}  \left(\lambda_m +\frac{\mu_n}{\lambda_m}\right)^{1/2} \lambda_m^{1/2} |u_{n,m}|^2 \\
 &\qquad =
\|u\|^2_{[L^2(\cJ;K)\cap H_{0,}^1(\cJ;K'),L^2(\cJ;K)]_{1/2,2}}.
\end{align*}
In particular, one has 
\begin{equation*}
U_F \simeq [L^2(\cJ;K)\cap H_{0,}^1(\cJ;K'),L^2(\cJ;K)]_{1/2,2}  \hookleftarrow L^2(\cJ;K) \cap H^1_{0,}(\cJ;K').
\end{equation*} 

\textit{Proof of \ref{itm:ident}}.
Let $u\in U = U_C + U_F$ with $u(0) = 0$. Any decomposition $u = u_1 + u_2$ with $u_1 \in U_C$ and $u_2 \in U_F$ must satisfy $u_1(0) = u(0) = 0$, that is, $u_1 \in L^2(\cJ,K) \cap H^1_{0,}(\cJ,K')$. Hence, the embedding in \ref{itm:embed} shows 
\begin{align*}
\lVert u \rVert_U^2 = \inf_{u = u_1 + u_2} \big( \lVert u_1 \rVert_{U_C}^2 + \lVert u_2 \rVert_{U_F}^2 \big) \gtrsim \inf_{u = u_1 + u_2} \big( \lVert u_1 \rVert_{U_F}^2 + \lVert u_2 \rVert_{U_F}^2 \big) \geq \tfrac12 \lVert u \rVert_{U_F}^2. 
\end{align*} 
Equivalence of norms follows by using the split
\begin{equation*}
\lVert u \rVert_U^2 = \inf_{u = u_1 + u_2} \big(\lVert u_1 \rVert_{U_C}^2 + \lVert u_2 \rVert_{U_F}^2\big) \leq \lVert 0 \rVert_{U_C}^2 + \lVert u \rVert_{U_F}^2.\qedhere
\end{equation*}
\end{proof}
With this auxiliary result we can show the following theorem.
\begin{theorem}[Well-posedness with general initial data]\label{thm:well-posedness}
The operator $B_e$ in \eqref{eq:defBe} is a linear isomorphism from $U  $ into the dual space $(V \times H)' \eqsim V' \times H$, that is, 
\begin{equation*}
B_e \in \Lis(U ,V ' \times H).
\end{equation*}
The upper bound for the norm of the operator and that of its inverse only depend linearly on the hidden constants in \eqref{eq:normEquia} and on $T$ when it tends to zero.
\end{theorem}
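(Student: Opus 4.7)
The plan is to derive the theorem by splicing together the classical well-posedness (Theorem~\ref{thm:classicalEx}), the fractional well-posedness (Theorem~\ref{thm:fractEx}), and the identification $\{u\in U\colon u(0)=0\}=U_F$ from Lemma~\ref{lem:Embedding}\,\ref{itm:ident}. A key underlying observation is that $V=V_F\hookrightarrow L^2(\cJ;K)=V_C$, so dually $V_C'\hookrightarrow V'$; this is what will let classical residuals be reinterpreted against fractional test functions.

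\textbf{Boundedness of $B_e$.} Given $u\in U$ and an arbitrary decomposition $u=u_1+u_2$ with $u_1\in U_F$, $u_2\in U_C$, I would estimate $Bu$ and $\gamma_0 u$ separately. For the first component, Theorem~\ref{thm:fractEx} gives $\lVert B u_1\rVert_{V'}\lesssim \lVert u_1\rVert_{U_F}$, while Theorem~\ref{thm:classicalEx} gives $\lVert B u_2\rVert_{V_C'}\lesssim \lVert u_2\rVert_{U_C}$, which together with $V_C'\hookrightarrow V'$ yields $\lVert Bu\rVert_{V'}\lesssim \lVert u_1\rVert_{U_F}+\lVert u_2\rVert_{U_C}$. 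For the trace, since $u_1\in U_F$ vanishes at $t=0$ in the sense of $\gamma_0$, $\gamma_0 u=\gamma_0 u_2$ and Theorem~\ref{thm:classicalEx} controls $\lVert \gamma_0 u_2\rVert_H\lesssim \lVert u_2\rVert_{U_C}$. Passing to the infimum over decompositions then yields $\lVert B_e u\rVert_{V'\times H}\lesssim \lVert u\rVert_U$.

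\textbf{Injectivity.} Suppose $B_eu=0$ for some $u\in U$. Then $\gamma_0 u=0$, so Lemma~\ref{lem:Embedding}\,\ref{itm:ident} places $u\in U_F$, and $Bu=0$ in $V'=V_F'$. Theorem~\ref{thm:fractEx} then forces $u=0$.

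\textbf{Surjectivity with quantitative inverse bound.} Given data $(f,u_0)\in V'\times H$, I would construct a preimage by solving two independent subproblems. First, by Theorem~\ref{thm:classicalEx} applied to the data $(0,u_0)\in V_C'\times H$, there exists $u_2\in U_C$ with $Bu_2=0$ in $V_C'$ and $\gamma_0 u_2=u_0$, satisfying $\lVert u_2\rVert_{U_C}\lesssim \lVert u_0\rVert_H$; note that $Bu_2=0$ in $V_C'$ implies $Bu_2=0$ in $V'$ because any $v\in V$ is in $V_C$. Second, by Theorem~\ref{thm:fractEx}, there exists $u_1\in U_F$ with $Bu_1=f$ in $V'$ and $\lVert u_1\rVert_{U_F}\lesssim \lVert f\rVert_{V'}$. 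Setting $u\coloneqq u_1+u_2\in U$ gives $B_e u=(Bu_1+Bu_2,\gamma_0 u_1+\gamma_0 u_2)=(f,u_0)$, and the decomposition $u=u_1+u_2$ furnishes $\lVert u\rVert_U^2\leq \lVert u_1\rVert_{U_F}^2+\lVert u_2\rVert_{U_C}^2\lesssim \lVert f\rVert_{V'}^2+\lVert u_0\rVert_H^2$.

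The tracking of the $T$-dependence in the hidden constants comes entirely from the analogous tracking in Theorems~\ref{thm:classicalEx} and \ref{thm:fractEx}. I expect the only delicate point to be the duality bookkeeping around $V_C'\hookrightarrow V'$ and the fact that $Bu_2$ produced by the classical problem has a fully fledged meaning as an element of $V'$; once that is in place, the argument is essentially a two-line splitting.
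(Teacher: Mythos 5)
Your proposal is correct and follows essentially the same route as the paper: bound $B_e$ on an arbitrary splitting $u=u_F+u_C$ using Theorems~\ref{thm:classicalEx} and~\ref{thm:fractEx} together with $V=V_F\hookrightarrow V_C$ (hence $V_C'\hookrightarrow V_F'$) and take the infimum, construct a preimage of $(f,u_0)$ by solving the classical problem with data $(0,u_0)$ and the fractional problem with data $f$, and obtain injectivity from Lemma~\ref{lem:Embedding}\ref{itm:ident} plus the isomorphism property of $B$ on $U_F$. The only point treated slightly more implicitly in both arguments is that $\gamma_0$ is well defined on $U$ independently of the splitting, which is exactly how the paper also uses it.
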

\begin{proof} 
First we show the boundedness of $B_e$.
Let $u=u_C+u_F$ with $u_C \in U_C$ and $u_F \in U_F$. This yields $B_e u=(B u_C+B u_F, u_C(0))$. From $V=V_F \hookrightarrow V_C$, and so 
$V_C' \hookrightarrow V_F'$, we obtain with Theorem~\ref{thm:classicalEx} and ~\ref{thm:fractEx}
\begin{equation*}
\|B u_C+B u_F\|_{V'}^2+\|u_C(0)\|_H^2 \lesssim \|B_e u_C\|_{V_C'\times H}^2+\|B u_F\|_{V'_F}^2 \lesssim \|u_C\|_{U_C}^2+\|u_F\|_{U_C}^2.
\end{equation*}
Since this holds true for any splitting $u=u_C+u_F$, we conclude $B_e \in \cL(U ,V ' \times H)$.

Now let $(f,u_0) \in V_F' \times H$ be some arbitrary data. According to Theorems~\ref{thm:classicalEx} and ~\ref{thm:fractEx} there exist unique functions $u_C \in U_C$ and $u_F \in U_F$ with 
\begin{align*}
\begin{aligned}
B_e u_C &= (0,u_0) &&\quad \text{and}&&\quad \lVert u_C \rVert_{U_C} \eqsim \lVert u_0 \rVert_{H},\\
B u_F &= f && \quad\text{and}&&\quad \lVert u_F \rVert_{U_F} \eqsim \lVert f\rVert_{V_F'}.
\end{aligned}
\end{align*} 
Hence, $u \coloneqq u_C + u_F \in U = U_C + U_F$ solves $B_e u = (f,u_0)$ and the solution depends continuously on the data, that is
\begin{equation*}
\lVert u \rVert_U^2 \leq \lVert u_C \rVert^2_{U_C} + \lVert u_F \rVert^2_{U_F} \lesssim \lVert u_0 \rVert^2_H + \lVert f\rVert_{V_F'}^2.
\end{equation*}

It remains to show injectivity of the operator $B_e$. Let $u_1,u_2\in U$ satisfy $B_e (u_1-u_2) = 0$. Then its initial trace $(u_1-u_2)(0) = 0$ equals zero and hence Lemma~\ref{lem:Embedding}\ref{itm:ident} yields $u_1-u_2 \in U_F$. The operator $B\colon U_F \to V_F'$ is a linear isomorphism (Theorem~\ref{thm:fractEx}), thus the property $B(u_1-u_2) = 0$ implies $u_1 = u_2$. 
\end{proof}
\begin{remark}[Formulation in the half-space]
Our variational formulation in Theorem~\ref{thm:well-posedness} is motivated by similar results on the half-space $\cJ = (0,\infty)$ in \cite[Thm.~2]{Tomarelli83} and \cite[Thm.~4.5]{Fontes09}.
\end{remark}

We have established a well-defined variational formulation that accommodates general initial data $u_0 \in H$. However, this achievement comes at the cost of utilizing a non-standard trial space $U = U_F + U_C$. 
We thus conclude this subsection with the subsequent discussion, showing that $U$ is only ``slightly'' smaller than $L^2(\cJ;K) \cap H^{1/2}(\cJ;H)$.
The result uses the real interpolation spaces, with $p>2$, 
\begin{equation*}
\begin{aligned}
B^{1/2,p,2}(\cJ) &\coloneqq [W^{1,p}(\cJ),L^p(\cJ)]_{1/2,2},\\
B^{1/2,p,2}(\cJ;H) &\coloneqq [W^{1,p}(\cJ;H),L^p(\cJ;H)]_{1/2,2}.
\end{aligned}
\end{equation*}
\begin{proposition}[Embedding of $U$] \label{prop:2}
One has for all $p>2$ the embeddings
\begin{equation*}
L^2(\cJ;K) \cap \big(H_{0,}^{1/2}(\cJ;H)+ B^{1/2,p,2}(\cJ;H)\big) \hookrightarrow U \hookrightarrow L^2(\cJ;K) \cap H^{1/2}(\cJ;H).
\end{equation*}
Both inclusions are strict. 
\end{proposition}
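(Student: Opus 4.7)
I would prove the two embeddings separately and establish strictness by explicit spectral examples. For the upper embedding, since $U=U_F+U_C$ it suffices to embed each summand into $L^2(\cJ;K)\cap H^{1/2}(\cJ;H)$. The inclusion $U_F\hookrightarrow L^2(\cJ;K)\cap H^{1/2}(\cJ;H)$ is trivial from $H^{1/2}_{0,}(\cJ;H)\subset H^{1/2}(\cJ;H)$; for $U_C$ I would invoke the classical Lions trace/interpolation estimate $L^2(\cJ;K)\cap H^1(\cJ;K')\hookrightarrow H^{1/2}(\cJ;H)$, which follows from real interpolation at $\theta=1/2$ together with the Gelfand-triple identity $[K,K']_{1/2,2}=H$ already used in the proof of Lemma~\ref{lem:Embedding}.

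For the lower embedding, given $u\in L^2(\cJ;K)\cap(H^{1/2}_{0,}(\cJ;H)+B^{1/2,p,2}(\cJ;H))$ with $u=v_1+v_2$, the Besov--Sobolev embedding $B^{1/2,p,2}(\cJ;H)\hookrightarrow C^{1/2-1/p}(\bar\cJ;H)$ (valid because $p>2$) gives a well-defined initial trace $u_0:=v_2(0)\in H$. I would extend $u_0$ via the analytic semigroup generated by $A_x$ from the proof of Lemma~\ref{lem:Embedding}, setting $u_C(t):=e^{-tA_x}u_0\in U_C$ with $\|u_C\|_{U_C}\lesssim\|u_0\|_H$ via a direct spectral computation. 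The candidate is then $u=u_F+u_C$ with $u_F:=u-u_C\in L^2(\cJ;K)$ and $u_F(0)=0$; the remaining step is to verify $u_F\in H^{1/2}_{0,}(\cJ;H)$ using the weighted-norm characterization of Proposition~\ref{prop:equiNorms}. Writing $u_F=v_1+(v_2-u_0)-(u_C-u_0)$, the first summand lies in $H^{1/2}_{0,}$ by assumption, and the Hölder continuity of $v_2-u_0$ supplies the Hardy-type estimate $\int_0^T\|v_2(t)-u_0\|_H^2/t\,\dt\lesssim T^{1-2/p}\|v_2\|^2_{B^{1/2,p,2}(\cJ;H)}$.

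The main obstacle is the third summand: a direct spectral computation yields $\int_0^T\|u_C(t)-u_0\|_H^2/t\,\dt\eqsim\sum_n|u_{0,n}|^2\log(1+T\lambda_n)$, and for a generic $u_0\in H$ this logarithmic quantity is not controlled by $\|u_0\|_H^2$. The resolution is to estimate the three contributions to $u_F$ jointly rather than term by term, exploiting the global $L^2(\cJ;K)$-regularity of $u$ to absorb the logarithmic contribution of the semigroup correction. This step is the technical heart of the argument and runs parallel to the analysis in \cite[Thm.~2]{Tomarelli83} and \cite[Thm.~4.5]{Fontes09} that underpins Theorem~\ref{thm:well-posedness}.

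Strictness of the first inclusion follows by taking $u=u_C\in U_C$ with an initial trace $u_0\in H$ whose spectral coefficients satisfy $\sum_n|u_{0,n}|^2\log(1+T\lambda_n)=\infty$ (e.g.\ $u_{0,n}\eqsim n^{-1/2}(\log n)^{-1}$ against a Laplace-like spectrum). Then $u\in U$, while any putative decomposition $u=v_1'+v_2'$ in the LHS with $v_2'\in B^{1/2,p,2}(\cJ;H)$ and $v_2'(0)=u_0$ would, by the Hardy--Hölder step above, give $v_2'-u_0\in H^{1/2}_{0,}(\cJ;H)$, whence $u_C-u_0=v_1'+(v_2'-u_0)\in H^{1/2}_{0,}(\cJ;H)$ — contradicting the choice of $u_0$. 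Strictness of the second inclusion follows by choosing a tensor product $u(t,x)=\psi(t)\phi(x)$ with $\phi\in K$ and $\psi\in H^{1/2}(\cJ)\setminus C(\bar\cJ)$ (e.g.\ with logarithmic blow-up at $t=0$): then $u\in L^2(\cJ;K)\cap H^{1/2}(\cJ;H)$, but $u\notin U$ because every $U$-element admits a well-defined initial trace in $H$ via its $U_C$-component, independent of the decomposition.
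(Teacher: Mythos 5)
Your proof of the right embedding and your overall splitting of the task are fine, and the right inclusion is argued exactly as in the paper (interpolation for the $U_C$ part, triviality for the $U_F$ part). The problem is the left embedding: your constructive route — set $u_0:=v_2(0)$, lift it by the semigroup to $u_C(t)=e^{-tA_x}u_0\in U_C$, and verify $u-u_C\in U_F$ through the weighted characterization of Proposition~\ref{prop:equiNorms} — stalls exactly where you say it does, and the "resolution" you offer is not an argument. The divergence $\int_0^T\|u_C(t)-u_0\|_H^2\,t^{-1}\dt\eqsim\sum_n|u_{0,n}|^2\log(1+T\lambda_n)$ is real (your computation is correct), and the claim that one can "estimate the three contributions jointly, exploiting the $L^2(\cJ;K)$-regularity of $u$" is precisely the statement to be proved; nothing in your outline indicates how the $L^2(\cJ;K)$ information interacts with the Hardy weight to cancel the logarithm, and proving that $u-e^{-tA_x}u(0)\in U_F$ for every $u$ in the left-hand space is essentially equivalent to the embedding itself. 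The paper avoids this entirely: it never constructs a decomposition, but instead shows $B_e\in\cL\big(L^2(\cJ;K)\cap(H_{0,}^{1/2}(\cJ;H)+B^{1/2,p,2}(\cJ;H)),\,V'\times H\big)$ via four elementary estimates — the trace bound $\|u(0)\|_H\lesssim\|u_2\|_{B^{1/2,p,2}(\cJ;H)}$ from $B^{1/2,p,2}(\cJ)\hookrightarrow C(\cJ)$ for $p>2$, two duality bounds for $\int_\cJ\langle\partial_t u_i,v\rangle\dt$ obtained by interpolating $\partial_t$ between $L^p$-scales and using $[W_0^{1,q}(\cJ),L^q(\cJ)]_{1/2,2}\simeq[W^{1,q}(\cJ),L^q(\cJ)]_{1/2,2}\hookleftarrow H^{1/2}_{,0}(\cJ)$ for $q<2$, and the boundedness of $a$ — and then concludes from Theorem~\ref{thm:well-posedness}, i.e.\ $B_e\in\Lis(U,V'\times H)$, that $\|u\|_U\eqsim\|B_e u\|_{V'\times H}\lesssim\|u\|_{L^2(\cJ;K)\cap(H_{0,}^{1/2}+B^{1/2,p,2})}$. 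If you want to complete your write-up, this duality/well-posedness detour (or the half-space analysis of Tomarelli/Fontes adapted to finite $T$) is what has to replace your missing "joint estimate".

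On strictness (which the paper asserts but does not prove): your example for the first inclusion is essentially sound — $u_0\in H$ with $\sum_n|u_{0,n}|^2\log(1+T\lambda_n)=\infty$, $u=e^{-tA_x}u_0\in U_C\subset U$, and the Hardy--H\"older argument showing that any decomposition in the left-hand space would force $u-u_0\in H^{1/2}_{0,}(\cJ;H)$ — provided you also justify that $v_1'=u-v_2'$ is continuous into $H$ so that its finite Hardy integral forces $v_1'(0)=0$. For the second inclusion your stated reason ("every $U$-element admits an initial trace") is not sufficient, since the $U_F$-component of a $U$-function need not be continuous; the argument that works is that the $U_C$-component is bounded in $H$ near $t=0$ while the $U_F$-component has finite $\int_0^T\|\cdot\|_H^2\,t^{-1}\dt$, so a tensor product $\psi\otimes\phi$ with $\psi\in H^{1/2}(\cJ)$ essentially blowing up at $t=0$ (e.g.\ an iterated-logarithm profile; note $\log(1/t)$ itself is not in $H^{1/2}(\cJ)$) cannot lie in $U$.
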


\begin{proof} 
The right inclusion follows from $U_F \hookrightarrow L^2(\cJ;K) \cap H^{1/2}(\cJ;H)$ and $U_C \hookrightarrow L^2(\cJ;K) \cap H^{1/2}(\cJ;H)$, the latter using analogous arguments as in the proof of Lemma~\ref{lem:Embedding}\ref{itm:embed}.
We prove the left inclusion by showing that
\be \label{eq:5}
B_e \in \cL(L^2(\cJ;K) \cap (H_{0,}^{1/2}(\cJ;H)+ B^{1/2,p,2}(\cJ;H)),V' \times H).
\ee
This implies by Theorem~\ref{thm:well-posedness} that for any $u \in L^2(\cJ;K) \cap (H_{0,}^{1/2}(\cJ;H)+ B^{1/2,p,2}(\cJ;H))$
\begin{equation*}
\|u\|_{L^2(\cJ;K) \cap (H_{0,}^{1/2}(\cJ;H)+ B^{1/2,p,2}(\cJ;H))} \gtrsim \|B_e u\|_{V' \times H} \eqsim \|u\|_U.
\end{equation*}

For $u \in L^2(\cJ;K) \cap (H_{0,}^{1/2}(\cJ;H)+ B^{1/2,p,2}(\cJ;H))$, let $u=u_1+u_2$ with $u_1 \in H_{0,}^{1/2}(\cJ;H)$ and $u_2 \in B^{1/2,p,2}(\cJ;H)$. We need to show that
\begin{equation*}
\|B_e u\|_{V' \times H} \lesssim \|u\|_{L^2(\cJ;K)}+\|u_1\|_{H_{0,}^{1/2}(\cJ;H)}+\|u_2\|_{B^{1/2,p,2}(\cJ;H)}.
\end{equation*}
Sufficient are the following estimates for all $v\in V$:
\begin{align} \label{eq:a}
\|u(0)\|_H  &\lesssim \|u_2\|_{B^{1/2,p,2}(\cJ;H)},\\ \label{eq:b}
\int_{\cJ} \langle \partial_t u_1(t),v(t)\rangle \dt &\lesssim \|u_1\|_{H_{0,}^{1/2}(\cJ;H)}\|v\|_{H_{,0}^{1/2}(\cJ;H)},\\ \label{eq:c}
\int_{\cJ} \langle \partial_t u_2(t),v(t)\rangle\dt &\lesssim \|u_2\|_{B^{1/2,p,2}(\cJ;H)}\|v\|_{H_{,0}^{1/2}(\cJ;H)},\\ \label{eq:d}
\int_\cJ a(t;u(t),v(t))\dt &\lesssim \|u\|_{L_2(\cJ;K)} \|v\|_{L_2(\cJ;K)}.
\end{align}
Estimate \eqref{eq:d} follows immediately by the assumption in \eqref{eq:normEquia}. 
From $B^{1/2,p,2}(\cJ) \hookrightarrow C(\cJ)$ by $p>2$ \cite[Thm.~7.34(c)]{MR2424078}, we have
\begin{equation*}
\|u(0)\|_H=\|u_2(0)\|_H\lesssim \|u_2\|_{B^{1/2,p,2}(\cJ;H)}.
\end{equation*}
Hence, \eqref{eq:a} is valid.
H\"older's inequality reveals with $1/p+1/q=1$ for smooth functions $w\colon \cJ \rightarrow \R$ and $z \in C_0^\infty(\cJ)$ that
\begin{align*}
\int_{\cJ} \partial_t w \, z\dt &\leq \|w\|_{W^{1,p}(\cJ)} \|z\|_{L^q(\cJ)},\\
\int_{\cJ} \partial_t w\, z\dt &= \int_{\cJ} w\, \partial_t z\dt
\leq  \|w\|_{L^p(\cJ)} \|z\|_{W^{1,q}(\cJ)}.
\end{align*}
Thus, a density argument and interpolation shows for all $s \in [0,1]$
\begin{equation*}
\|\partial_t \|_{\cL(
[W^{1,p}(\cJ),L^p(\cJ)]_{s,2},
[W_0^{1,q}(\cJ),L^q(\cJ)]_{1-s,2}'
)} \leq 1.
\end{equation*}
Thanks to $q<2$ it holds that
\begin{equation*}
[W_0^{1,q}(\cJ),L^q(\cJ)]_{1/2,2} \simeq [W^{1,q}(\cJ),L^q(\cJ)]_{1/2,2} \hookleftarrow H_{,0}^{1/2}(\cJ).
\end{equation*}
Combining these results and using the definition of the space $B^{1/2,p,2}(\cJ;H) = [W^{1,p}(\cJ;H),L^p(\cJ;H)]_{1/2,2}$ lead to
\begin{equation*}
\begin{aligned}
\int_{\cJ} \langle \partial_t u_2(t),v(t)\rangle \dt 
& \leq \|u_2\|_{B^{1/2,p,2}(\cJ;H)}\|v\|_{[W_0^{1,q}(\cJ;H),L^q(\cJ;H)]_{1/2,2}} \\
& \lesssim \|u_2\|_{B^{1/2,p,2}(\cJ;H)}\|v\|_{H_{,0}^{1/2}(\cJ;H)}.
\end{aligned}
\end{equation*}
Hence, \eqref{eq:c} is valid.
Taking smooth functions $w,z\colon \cJ \rightarrow \R$, where now $w$ vanishes at $0$ and $z$ vanishes at $T$, analogous arguments show
\begin{equation*}
\|\partial_t \|_{\cL([H^1_{0,}(\cJ),L^2(\Omega)]_{s,2},[H^1_{,0}(\cJ),L^2(\cJ)]_{1-s,2}')} \leq 1\qquad\text{for all }s\in [0,1].
\end{equation*}
This yields for $s=1/2$ the bound in \eqref{eq:b}.
\end{proof}
\begin{remark}[Embedding for $s > 1/2$] \label{rem:remmie}
Let $s > 1/2$ and $p>2$. Combining the embedding $H^s(\cJ;H)\hookrightarrow B^{1/2,p,2}(\cJ;H)$ and Proposition~\ref{prop:2} leads to the statement
\begin{equation*}
L^2(\cJ;K) \cap \big(H_{0,}^{1/2}(\cJ;H)+ H^{s}(\cJ;H)\big) \hookrightarrow U \hookrightarrow L^2(\cJ;K) \cap H^{1/2}(\cJ;H).
\end{equation*}
\end{remark}

\section{Minimal residual method: Overview}\label{subsec:discreteSetting}
We use a minimal residual method to approximate the solution to \eqref{eq:defBe} with given right-hand side $f\in V'$ and initial data $u_0 \in H$. 
More precisely, given discretizations $U_h \subset U$ and $V_h \subset V$, we compute the minimizer
\begin{equation}\label{eq:uhprime}
u_h = \argmin_{w_h \in U_h}\, \lVert B w_h - f\rVert_{V_h'}^2  + \lVert w_h(0) - u_0\rVert_{H}^2.
\end{equation}
If there exists a Fortin operator, that is, if there is a uniformly bounded operator $F\colon V \to V_h$ with $b(u_h,v-Fv) = 0$ for all $u_h \in U_h$ and $v \in V$, then the numerical scheme in \eqref{eq:uhprime} is quasi-optimal \cite[Lem.~3.2 as well as Thm.~3.3 and 3.6]{MonsuurStevensonStorn23}. That is, with solution $u\in U$ to $B_e u = (f,u_0)$ we obtain the quasi-best approximation property
\begin{equation}\label{eq:QuasiBest}
\lVert u-u_h \rVert_U \lesssim \min_{w_h \in U_h} \lVert u- w_h \rVert_U.
\end{equation}
We verify the existence of a Fortin operator in Theorem~\ref{thm:FortinOperator} below.

The minimization in \eqref{eq:uhprime} is equivalent \cite[Sec.~3.2]{MonsuurStevensonStorn23} to the following mixed system involving the inner product $\langle \bigcdot,\bigcdot \rangle_V$ in $V$: Seek $(\lambda_h,u_h) \in V_h \times U_h$ with
\begin{equation}\label{eq:SaddlePointProb}
\begin{aligned}
\langle \lambda_h,\mu_h\rangle_V +b(u_h,\mu_h)  &= f(\mu_h) &&\text{ for all } \mu_h\in V_h,\\
b(w_h,\lambda_h) - \langle u_h(0),w_h(0)\rangle_{H} & =-\langle u_0,w_h(0)\rangle_H&&\text{ for all }w_h\in U_h.
\end{aligned}
\end{equation}
However, it is unclear how to evaluate the inner product in the fractional space $V$ on non-product type meshes, cf.~\cite{SteinbachZank20,Zank20} for the discussion of the evaluation of the norm in $V$ on tensor product meshes.
To circumvent the evaluation of $\langle \bigcdot,\bigcdot\rangle_V$ on $V_h$, let $G_h \in \cL(V_h',V_h)$
be such that the application $G_h$ can be performed in linear time, and its inverse defines a scalar product $(G_h^{-1} \bigcdot)(\bigcdot)$ on $V_h \times V_h$ whose induced norm is uniformly equivalent to the norm $\|\bigcdot\|_{V}$ on $V_h$.
Such an operator $G_h$ is known as a preconditioner for the Riesz map $v_h\mapsto \langle v_h,\bigcdot \rangle_V \in \Lis(V_h,V_h')$, and is designed in Section~\ref{sec:preCond} below.
We replace the inner product  $\langle \bigcdot,\bigcdot\rangle_V$ in the saddle point problem \eqref{eq:SaddlePointProb} by $(G_h^{-1} \bigcdot)(\bigcdot)$.
The resulting solution $u_h \in U_h$ is still a quasi-best approximation in the sense of \eqref{eq:QuasiBest}, see \cite[Thm.~3.5]{MonsuurStevensonStorn23}. 
Elimination of $\lambda_h$ in \eqref{eq:SaddlePointProb} leads to the symmetric positive definite Schur complement equation: Seek $u_h \in U_h$ with 
\begin{equation}\label{eq:system}
(B w_h) (G_h (Bu_h-f))+\langle w_h(0),u_h(0)-u_0\rangle_{H}=0\qquad \text{for all }w_h \in U_h.
\end{equation}
The matrix corresponding to \eqref{eq:system} will be densely populated, so applying a direct solver is not feasible. Instead, we apply the preconditioned conjugate gradient method discussed in Section~\ref{subsec:PCGscheme} below.

\section{Finite element spaces}\label{sec:Discretization}
This section introduces the discrete spaces $U_h$ and $V_h$ of finite element type that we use in the minimal residual method \eqref{eq:uhprime}. 
We specify the 
abstract spaces from Section~\ref{sec:VarForm} and \ref{subsec:discreteSetting} by $H \coloneqq L^2(\Omega)$ and $K \coloneqq H^1_0(\Omega)$ with bounded Lipschitz domain $\Omega \subset \mathbb{R}^d$. 
The construction of the finite element spaces build on an underlying partition $\tria$ of the time-space cylinder $Q$.
\subsection{Partition}\label{subsec:tria}
Our partition $\mathcal{T}$ of $Q \subset \mathbb{R}^{d+1}$ consists of non-overlapping time-space cylinders $K = K_t \times K_x$, where $K_t \subset J$ are time intervals and $K_x \subset \Omega$ are simplices. Such partitions have been shown to be better suited for parabolic problems than purely simplicial meshes, see \cite{DieningStorn22,GantnerStevenson23}. Furthermore, these partitions facilitate parabolically scaled mesh refinements, as motivated by Lemma~\ref{lem:ParaPoincareH12} below. Similar to the case of simplicial meshes, we must impose a set of assumptions to ensure conformity and shape regularity within the mesh:
\begin{assumption}[Partition]\label{ass:Partition} 
\ 
\begin{enumerate}
\item The partition $\tria$ satisfies $\overline{Q} = \bigcup_{K\in \tria} K$, and for any two cells $K_1,K_2\in \tria$ we have $\meas_{d+1}(K_1 \cap K_2)=0$ or $K_1 = K_2$.
\item The elements $K$ are time-space cylinders $K = K_t \times K_x$ with time interval $K_t \subset \overline{\mathcal{J}}$ and simplex $K_x \subset \overline{\Omega}$, where $K_x$ is uniformly shape regular in the sense that $\diam(K_x)^d\eqsim  \meas_{d}(K_x)$.
\item If we have $\meas_{d}(K_1 \cap K_2)>0$ for $K_1 \neq K_2 \in \tria$, then  $K_1 \cap K_2$ is a hyperface of $K_1$ or $K_2$. 
Such $K_1$ and $K_2$ will be called either temporal (hyperface is perpendicular to the time-axis) or spatial neighbors (hyperface is perpendicular to $\lbrace 0 \rbrace \times \mathbb{R}^d$). \label{itm:conformity}
\item We assume for all $K = K_t\times K_x \in \tria$ the parabolic scaling 
\begin{equation}\label{eq:ParabolicScaling}
h_{K,t} \coloneqq \diam(K_t) \eqsim h_{K,x}^2 \coloneqq \diam(K_x)^2.
\end{equation}
\item Neighbors $K_1, K_2 \in \tria$, i.e.~$K_1\cap K_2\neq \emptyset$, are of equivalent size, that is, 
\begin{equation}\label{eq:EqualSizeNeighbors}
h_{K_1,t} \eqsim h_{K_2,t}\qquad\text{and}\qquad h_{K_1,x} \eqsim h_{K_2,x}.
\end{equation}
\item Temporal neighbors $K_1,K_2$ of $K$ that are at the same side of $K$, i.e. $K_t \cap K_{1,t} \cap K_{2,t} \neq \emptyset$, have the same time interval, i.e.~$K_{1,t} = K_{2,t}$.
\label{itm:TimeSpaceCylNeighbors}
\end{enumerate}
\end{assumption}

The assumption in \ref{itm:TimeSpaceCylNeighbors} could be avoided but simplifies the presentation of the results.
We obtain such triangulations with the following (adaptive) refinement strategy, already used in \cite{GantnerStevenson23}:
Our initial partition is a tensor product mesh 
$\tria_0 = \tria_t \otimes \tria_x$ with equidistant partition $\tria_t$ of $\mathcal{J}$ and conforming (in the sense of Ciarlet) triangulation $\tria_x$ of $\Omega$ into simplices. We define the level $\level(K) \coloneqq 0$ of each time-space cell $K\in \tria_0$. Let us assume that there exists a uniform refinement routine for $\tria_x$ leading to conforming partitions with halved mesh sizes, e.g.~the red-refinement routine for $d=2$ or the Maubach-Traxler routine with initialization as discussed in \cite{DieningGehringStorn23} for $d\geq 2$.
Then our local refinement routine $\refine(K)$ for $K = K_t \times K_x$ applies the spatial refinement routine to $K_x$ and splits the time interval $K_t$ into four novel equidistant time intervals. The level of each time-space cell resulting from an application of $\refine(K)$ is set as $\level(K)+1$. This allows us to perform adaptive mesh refinements with a mesh closure routine ensuring that levels of neighboring time-space cells differ at most by one, that is,
\begin{equation}\label{eq:LevelDist}
|\level(K_1) - \level(K_2)|\leq 1\qquad\text{for all }K_1,K_2\in \tria\text{ with }K_1\cap K_2 \neq \emptyset.
\end{equation}

Due to the cylindrical structure of our partitions $\tria$, we obtain for each fixed point $x\in \Omega$ that does not belong to the boundary of some simplex $K_x$ a partition of the time interval $\mathcal{J}$. More precisely, for $x \in \Omega$ with $x \not\in\bigcup \lbrace\partial K_x\colon K = K_t\times K_x  \in \tria \rbrace$ we set the partition of $\cJ$ into essentially non-overlapping subintervals
\begin{equation*}
\tria_t(x) \coloneqq \lbrace K_t \colon K = K_t\times K_x \in \tria\text{ with }x\in K_x\rbrace.
\end{equation*}
For $x \in \bigcup \lbrace\partial K_x\colon K = K_t\times K_x  \in \tria \rbrace \subset \Omega$, being a set of measure zero, we set $\tria_t(x)\coloneqq \emptyset$.
For each $K = K_t \times K_x \in \tria$ we define
\begin{equation}\label{eq:def_qKt}
q_K^t \coloneqq \textup{clos}\Big( \bigcup_{x\in K_x} q_{K_t}(x) \times \lbrace x\rbrace \Big)\ \text{ with } q_{K_t}(x) \coloneqq \bigcup \lbrace K_t' \in \tria_t(x)\colon K_t'\cap K_t \neq \emptyset\rbrace.
\end{equation}
Due to the assumption in \ref{itm:TimeSpaceCylNeighbors} the set $q_K^t = q_{K,t}^t \times K_x$ is a time-space cylinder with time interval $q_{K,t}^t$ and  we can equivalently characterize $q_K^t$ as the union of $K_t$ and the time intervals of the temporal neighbors of $K$ times $K_x$. 
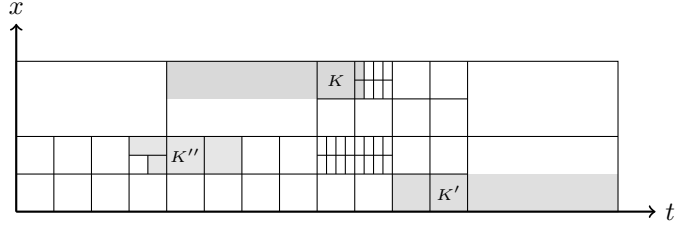
\begin{figure}
\begin{tikzpicture}
     \fill[gray!30] (2,1.5) rectangle (4.625,2); 
     \fill[gray!25] (5,0.5) rectangle (8,0);
     \fill[gray!20] (1.75,0.5) rectangle (3,1);
     \fill[gray!20] (1.5,0.75) rectangle (2,1);
    \draw (0,0) rectangle (8,2); 
    
    \draw (0,1) -- (8,1); 
    
    \draw (2,0) -- (2,2);
    \draw (4,0) -- (4,2);
    \draw (6,0) -- (6,2);
    
    
	\draw(.5,0) -- (.5,1); \draw(1,0) -- (1,1); \draw(1.5,0) -- (1.5,1);    
	\draw(1.5,0.75) -- (2.0,0.75);
    \draw(1.75,0.5) -- (1.75,.75);
    
	\draw (0,.5) -- (6,.5);
	\draw(2.5,0) -- (2.5,1); \draw(3,0) -- (3,1); \draw(3.5,0) -- (3.5,1);
	\draw(4.5,0) -- (4.5,1); \draw(5,0) -- (5,1); \draw(5.5,0) -- (5.5,1);
	\draw (4,1.5) -- (6,1.5);
	\draw(4.5,1) -- (4.5,2); \draw(5,1) -- (5,2); \draw(5.5,1) -- (5.5,2);
	\draw(4,.75) -- (5,.75);	
	\draw(4.25,.5) -- (4.25,1); \draw(4.125,.5) -- (4.125,1); \draw(4.375,.5) -- (4.375,1);
	\draw(4.75,.5) -- (4.75,1); \draw(4.625,.5) -- (4.625,1); \draw(4.875,.5) -- (4.875,1);
	
	\draw(4.5,1.75) -- (5,1.75);
	\draw(4.75,1.5) -- (4.75,2); \draw(4.625,1.5) -- (4.625,2); \draw(4.875,1.5) -- (4.875,2);
    
	\node at (4.25,1.75) {\tiny $K$};
	\node at (5.75,0.25) {\tiny $K'$};
	\node at (2.25,0.75) {\tiny $K''$};
    
    \draw[->,thick] (0,0) -- (0,2.5) node[anchor=south] {$x$};
    \draw[->,thick] (0,0) -- (8.5,0) node[anchor=west] {$t$};
\end{tikzpicture}
\caption{A partition of a time-space cylinder with $d=1$, where the areas $q_K^t$, $q_{K'}^t$, and $q_{K''}^t$ for time-space cells $K,K',K''$ are marked gray. 
Notice that Assumption~\ref{ass:Partition}~\ref{itm:TimeSpaceCylNeighbors} does \emph{not} permit the area $q_{K''}^t$, since the left neighbors of $K''$ do not have the same time interval.}\label{fig:examplePartition}
\end{figure}
The grading assumption in \eqref{eq:EqualSizeNeighbors} allows for the following localization.
\begin{lemma}[Localized norm]\label{lem:localizedGeneral}
Each $v\in H^{1/2}(\mathcal{J};L^2(\Omega))$ satisfies 
\begin{equation}
\lvert v \rvert_{H^{1/2}(\cJ;L^2(\Omega))}^2 \lesssim \sum_{K\in \tria} \lvert v \rvert_{H^{1/2}(q_{K,t}^t;L^2(K_x))}^2 + h_{K,t}^{-1}\, \lVert v \rVert^2_{L^2(K)}.
\end{equation}
\end{lemma}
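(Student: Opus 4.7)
The plan is to fix $x \in \Omega$, rewrite the double integral over $\cJ \times \cJ$ as a sum over pairs of time intervals from $\tria_t(x)$, and split this sum into a \emph{near} and a \emph{far} contribution. Then integrate over $x$ using the spatial cylindrical structure.

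First, by Fubini, I would write
\begin{equation*}
\lvert v \rvert^2_{H^{1/2}(\cJ;L^2(\Omega))} = \int_{\Omega} \int_{\cJ} \int_{\cJ} \frac{|v(t,x) - v(s,x)|^2}{|t-s|^2} \dt \ds \dx.
\end{equation*}
For a.e.~$x \in \Omega$, the collection $\tria_t(x)$ partitions $\cJ$ into essentially disjoint time intervals. For fixed such $x$, I split the inner double integral along the diagonal: for $s \in K_t \in \tria_t(x)$, I split the $t$-integration into $t \in q_{K_t}(x)$ (near) and $t \in \cJ \setminus q_{K_t}(x)$ (far).

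For the \emph{near} part, since $s \in K_t \subset q_{K_t}(x)$, I bound
\begin{equation*}
\sum_{K_t \in \tria_t(x)} \int_{K_t} \int_{q_{K_t}(x)} \frac{|v(t,x)-v(s,x)|^2}{|t-s|^2}\dt \ds \leq \sum_{K_t \in \tria_t(x)} \lvert v(\cdot,x) \rvert^2_{H^{1/2}(q_{K_t}(x))}.
\end{equation*}
For the \emph{far} part, I use $|v(t,x)-v(s,x)|^2 \leq 2(|v(t,x)|^2 + |v(s,x)|^2)$ and symmetry in $(t,s)$ to reduce to bounding
\begin{equation*}
\int_{\cJ} |v(s,x)|^2 \int_{\cJ \setminus q_{K_t(s)}(x)} \frac{1}{|t-s|^2}\dt \ds,
\end{equation*}
where $K_t(s)$ is the interval in $\tria_t(x)$ containing $s$. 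The key geometric input is that by the grading \eqref{eq:EqualSizeNeighbors}, the temporal neighbor adjacent to $K_t(s)$ has length $\eqsim h_{K_t(s)}$, so $\textup{dist}(s, \cJ \setminus q_{K_t(s)}(x)) \gtrsim h_{K_t(s)}$, yielding $\int_{\cJ \setminus q_{K_t(s)}(x)} |t-s|^{-2}\dt \lesssim h_{K_t(s)}^{-1}$. This gives the desired bound $\sum_{K_t \in \tria_t(x)} h_{K,t}^{-1} \int_{K_t} |v(s,x)|^2 \ds$ for the far contribution.

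Finally, I integrate over $x \in \Omega$ and interchange sum and integral. For each $K = K_t \times K_x \in \tria$, the integrand contributes exactly when $x \in K_x$. Here the hypothesis \ref{itm:TimeSpaceCylNeighbors} is essential: it ensures that for all $x \in K_x$ the patch $q_{K_t}(x)$ coincides with the fixed interval $q_{K,t}^t$, so that $\int_{K_x} \lvert v(\cdot,x) \rvert^2_{H^{1/2}(q_{K_t}(x))}\dx = \lvert v \rvert^2_{H^{1/2}(q_{K,t}^t;L^2(K_x))}$ and $\int_{K_x} h_{K,t}^{-1} \int_{K_t} |v(s,x)|^2\ds\dx = h_{K,t}^{-1} \lVert v \rVert^2_{L^2(K)}$, completing the proof.

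The main technical obstacle is the bookkeeping in step two: one must carefully verify that the distance from $s \in K_t$ to the complement of $q_{K_t}(x)$ is controlled by $h_{K,t}$ (and not by the possibly much smaller or larger size of some far-away interval), which uses precisely the grading \eqref{eq:EqualSizeNeighbors} applied to the \emph{adjacent} temporal neighbor.
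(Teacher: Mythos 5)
Your argument is correct and structurally identical to the paper's proof: both reduce by Fubini to the one-dimensional slices $w=v(\cdot,x)$, localize the Sobolev--Slobodeckij seminorm of $w$ over the patches $q_{K_t}(x)$, and integrate over $x\in\Omega$; the only difference is that the paper imports the one-dimensional localization from the cited lemma of Carstensen and Faermann, whereas you re-derive it via the near/far splitting, and your key geometric step --- the grading \eqref{eq:EqualSizeNeighbors} applied to the adjacent temporal neighbour giving $\operatorname{dist}(s,\cJ\setminus q_{K_t(s)}(x))\gtrsim h_{K_t(s)}$, together with the symmetry of the far region $\{(s,t)\colon K_t(s)\cap K_t(t)=\emptyset\}$ --- is sound. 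One minor remark: Assumption~\ref{ass:Partition}\ref{itm:TimeSpaceCylNeighbors} is not actually essential for this upper bound, since the inclusion $q_{K_t}(x)\subseteq q^t_{K,t}$ for a.e.\ $x\in K_x$, immediate from the definition \eqref{eq:def_qKt}, already suffices in the final integration over $x$.
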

\begin{proof}
Let $v\in H^{1/2}(\mathcal{J};L^2(\Omega))$, and let $x \in \Omega$ with $\tria_t(x) \neq \emptyset$. Then $w \coloneqq v(\bigcdot,x)$ satisfies \cite[Lem.~4.2]{CarstensenFaermann01}
\begin{align*}
&\int_\mathcal{J} \int_\mathcal{J} \frac{|w(t)-w(s)|^2}{|t-s|^2} \dt\ds\\
&\qquad  \lesssim \sum_{K_t\in \tria_t(x)}  \int_{q_{K_t}(x)}  \int_{q_{K_t}(x)} \frac{|w(t)-w(s)|^2}{|t-s|^2} \dt\ds + |K_t|^{-1} \int_{K_t} |w(t)|^2 \dt.
\end{align*}
Integrating over all $x\in \Omega$ concludes the proof.
\end{proof}
\subsection{Finite element spaces}
Let $\tria$ be a partition of $Q$ satisfying Assumption~\ref{ass:Partition}.
For each time-space cylinder $K = K_t\times K_x\in \tria$ and all $p = (p_x,p_t)\in \mathbb{N}^2_0$ we set the polynomial spaces
\begin{align*}
\mathbb{P}_{p_t}(K_t) & \coloneqq \lbrace v_x \in L^2(K_t) \colon v_t \text{ is a polynomial of maximal degree }p_t\rbrace,\\
\mathbb{P}_{p_x}(K_x) & \coloneqq \lbrace v_x \in L^2(K_x) \colon v_x \text{ is a polynomial of maximal degree }p_x\rbrace,\\
\mathbb{P}_p(K) & \coloneqq \mathbb{P}_{p_t}(K_t) \otimes \mathbb{P}_{p_x}(K_x) \coloneqq \textup{span}\lbrace v_tv_x \colon v_t \in \mathbb{P}_{p_t}(K_t)\text{ and } v_x \in \mathbb{P}_{p_x}(K_t)\rbrace.
\end{align*}
Moreover, we set for all $p = (p_t,p_x)\in \mathbb{N}^2$ the space of piece-wise polynomial and globally continuous functions
\begin{align*}
W_h^{p_t,p_x} \coloneqq \lbrace v \in C^0(Q) \colon v|_K \in \mathbb{P}_p(K)\text{ for all }K\in \tria\rbrace.
\end{align*}
To include the homogeneous boundary data on $\mathcal{J} \times \partial \Omega$ we set 
\begin{equation}\label{eq:DefW_h0ptpx}
W_{h,0}^{p_t,p_x} \coloneqq W_h^{p_t,p_x} \cap W_0 \qquad\text{with } W_0 \coloneqq L^2(\mathcal{J} ;H^1_0(\Omega)) \cap H^{1/2}(\cJ;L^2(\Omega)).
\end{equation}
Our discrete trial space then reads with $p = (p_t,p_x) \in \mathbb{N}^2$
\begin{equation*}
U_h \coloneqq W_{h,0}^{p_t,p_x}.
\end{equation*}
The test space $V_h$ will be the sum of the lowest-order space 
\begin{equation*}
V_{h,0} \coloneqq W_{h,0}^{1,1} \cap V
\end{equation*}
and the span of suitable bubble functions defined as follows:

For all $K = K_t \times K_x \in \tria$ we set the non-negative bubble function
\begin{equation*}
\mathcal{B}_K \in \mathbb{P}_2(K_t) \otimes \mathbb{P}_{d+1}(K_x)\qquad\text{ with } \mathcal{B}_K|_{\partial K} = 0\text{ and } \lVert \mathcal{B}_K \rVert_{L^2(K)} = 1.
\end{equation*}
In other words, $\mathcal{B}_K$ is the scaled product of all local lowest-order basis functions associated to the vertices of $K$. We define the space of volume bubble functions as 
\begin{equation} \label{eq:volumebubble}
V_{h,2} \coloneqq \bigoplus_{K\in \tria} \mathcal{B}_K \big(\mathbb{P}_{p_t}(K_t) \otimes \mathbb{P}_{p_x-2}(K_x)\big) + \mathcal{B}_K \big(\mathbb{P}_{p_t-1}(K_t) \otimes \mathbb{P}_{p_x}(K_x)\big).
\end{equation}
Additionally, we define the following face bubble functions. 
Let $\mathcal{F}$ denote the set of all faces ($d$-hyperfaces of time-space cells) in $\tria$. 
We denote by $\mathcal{F}_t$ the set of all faces $f\in \mathcal{F}$ in $\tria$ that are orthogonal to the $\{0\} \times \Omega$ plane, and that do not contain children, that is,
\begin{equation*}
\mathcal{F}_t \coloneqq \lbrace f \in \mathcal{F} \colon f \perp \lbrace 0 \rbrace \times \mathbb{R}^d\text{ and any } g\in \mathcal{F}\text{ with } g \subset f \text{ satisfies }g =f  \rbrace.
\end{equation*}
The set $\mathcal{F}_{t,0} \subset \mathcal{F}_t$  denotes the set of all faces $f\in \mathcal{F}_t$ that are not on the lateral boundary, that is, $f\not \subset \overline{\cJ} \times \partial\Omega$.
We set for all faces $f \in \mathcal{F}_{t,0}$ the non-negative face bubble function
\begin{equation*}
\mathcal{B}_f \in W^{2,d}_h\qquad\text{with } \lVert \mathcal{B}_f \rVert_{L^2(Q)}=1\text{ and }\mathcal{B}_f|_g = 0\text{ for all }g\in \mathcal{F} \text{ with }g\not\subset f.
\end{equation*}
In other words, $\mathcal{B}_f$ is the scaled product of the local lowest-order basis functions associated to the vertices of $f$. Let $(\psi_j)_{j\in \mathcal{N}^{p_t,p_x-1}} \subset W^{p_t,p_x-1}_h$ denote the nodal basis with associated Lagrange nodes $\mathcal{N}^{p_t,p_x-1} \subset \overline{Q}$. Then we set the space of face bubble functions as 
\begin{equation} \label{eq:facebubble}
V_{h,1} \coloneqq \bigoplus_{f\in \mathcal{F}_{t,0}} \textup{span} \lbrace \mathcal{B}_f\, \psi_j \colon j \in \mathcal{N}^{p_t,p_x-1} \cap f  \rbrace.
\end{equation}
We finally set our discrete test space 
\begin{equation}\label{eq:testSpace}
V_h \coloneqq V_{h,0} \oplus V_{h,1} \oplus V_{h,2}.
\end{equation}
\begin{remark}[Optimal polynomial degrees] \label{rem:1} 
Let $\tria$ be a quasi-uniform triangulation with mesh sizes $h_x$ in space and $h_t$ in time. For smooth functions $u \in W_0$, the best approximation error $\min_{v_h\in W_{h,0}^{p_t,p_x}} \lVert u - v_h \rVert_W$, cf.~\eqref{eq:defNormW} below, is proportional to 
\begin{equation*}
\max\big\lbrace \max\lbrace h_t^{p_t+1},h_x^{p_x}\rbrace,\max \lbrace h_t^{p_t+1/2},h_x^{p_x+1}\rbrace \big\rbrace =\max \lbrace h_t^{p_t+1/2},h_x^{p_x}\rbrace.
\end{equation*}
Using the parabolic scaling $h_t=h_x^2$, both terms are equal when $2 p_t+1=p_x$. 
Since the same result is valid when $W_0$ is replaced by $L^2(\mathcal{J} ;H^1_0(\Omega)) \cap B^{1/2,p,2}(\mathcal{J};L^2(\Omega))$, Proposition~\ref{prop:2} shows that this result is also valid for the space $U$. Consequently, the choice $2 p_t+1=p_x$ of the  polynomial degrees $p=(p_t,p_x)$ for the discrete trial space $U_h$  is the most efficient one.
\end{remark}

\section{Interpolation and Fortin operator} \label{sec:Interpol}
The quasi-best approximation property \eqref{eq:QuasiBest} of our numerical scheme \eqref{eq:uhprime} requires the existence of a Fortin operator. We design such an operator in Section~\ref{subsec:Fortin}. A key tool in its design is a stable interpolation operator, which we analyze in the following Section~\ref{subsec:FEspaces}.
\subsection{Interpolation operator}\label{subsec:FEspaces}
We have shown in Lemma~\ref{lem:localizedGeneral} that the global norm of $v\in W$ is bounded from above by the sum of localized components. Equivalence does in general not hold. We will show, however, that local stable projectors $\mathcal{I} \colon L^2(\cJ;H^1(\Omega)) \to W_{h}^p$ with $p = (p_t,p_x) \in \mathbb{N}^2$ allow for a localization of the interpolation error.
We thereby focus on a Scott--Zhang operator similar to the original one in \cite{ScottZhang90}. Its definition involves the $L^2$ inner product $\langle \bigcdot ,\bigcdot \rangle_q \coloneqq \int_q \bigcdot \cdot \bigcdot \dx$ for volume contributions $q \subset \overline{Q}$ or boundary contributions $q\subset \overline{\mathcal{J}} \times \partial \Omega$.
\begin{definition}[Scott--Zhang operator]\label{def:SZ}
Let $p = (p_t,p_x) \in \mathbb{N}^2$ and let $(\psi_{j})_{j \in \mathcal{N}^p}$ denote the nodal basis of $W_h^p$ with corresponding Lagrange nodes $\mathcal{N}^p$. We assign to each nodal basis function $\psi_j$ with $j\not\in \overline{\cJ}\times \partial \Omega$ a time-space cylinder $S_j \in \tria$. If the Lagrange node of $\psi_\ell$ is on the lateral boundary $\overline{\cJ} \times \partial \Omega$, we choose $S_j \in \faces_t$ with $j\in S_j$ and define the bi-orthogonal basis function $\psi_j^* \in \mathbb{P}_{p}(S_j)$ such that
\begin{equation}\label{eq:DualWeight}
\langle \psi_\ell,\psi_j^*\rangle_{S_j} = \delta_{j,\ell}\qquad\text{for all }\ell \in \mathcal{N}^p(S_j),   
\end{equation}
where $\mathcal{N}^p(S_j)$ denotes the set of Lagrange nodes for $\mathbb{P}_{p}(S_j)$.
We set the operator 
\begin{equation}\label{eq:DefI}
\mathcal{I} \coloneqq \sum_{j\in \mathcal{N}^p} \langle \bigcdot ,\psi_{j}^*\rangle_{S_j} \psi_j.
\end{equation}
\end{definition}

Classical scaling arguments as for example in \cite{ScottZhang90,BrennerScott08} imply the following.
\begin{lemma}[Basic properties of $\mathcal{I}$] \label{lem:basicpropsSZ}
Any Scott--Zhang-type operator $\mathcal{I}$ defined as in Definition~\ref{def:SZ} 
\begin{itemize}
\item preserves homogeneous boundary data in the sense that $\mathcal{I}\colon W_0 \to W_{h,0}^p$,
\item satisfies the projection property $\mathcal{I} w_h = w_h$ for all $w_h \in W^p_h$,
\item is local in the sense that one has for all $K\in \tria$ the identity $(\mathcal{I} v)|_{K} = (\mathcal{I} w)|_{K}$ for all $v,w \in L^2(\cJ;H^1(\Omega))$ with $v|_{q_k} = w|_{q_k}$ and element patch 
\begin{equation*}
q_K \coloneqq \bigcup \lbrace \textup{supp}(\psi_j)\colon j \in \mathcal{N}^p\text{ and }K\subset \textup{supp}(\psi_j)\rbrace,
\end{equation*}
\item is locally  $L^2$ stable on $W_0$ in the sense that
\begin{equation}\label{eq:LocL2Stab}
\lVert \mathcal{I} w \rVert_{L^2(K)} \lesssim \lVert w \rVert_{L^2(q_K)}\qquad\text{for all }K\in \tria \text{ and }w\in W_{0}.
\end{equation}
\end{itemize}
\end{lemma}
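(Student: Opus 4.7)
The plan is to verify each of the four assertions using standard Scott--Zhang arguments combined with the biorthogonality relation \eqref{eq:DualWeight} and affine scaling to a fixed reference configuration. All four properties are essentially local consequences of the definition \eqref{eq:DefI}.

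For boundary preservation, I would invoke the standard Scott--Zhang trick: for any Lagrange node $j$ on the lateral boundary $\overline{\cJ}\times\partial\Omega$, the face $S_j \in \faces_t$ can be chosen so that $S_j \subset \overline{\cJ}\times\partial\Omega$. For $w \in W_0 \subset L^2(\cJ;H^1_0(\Omega))$ the trace of $w$ on $\cJ\times\partial\Omega$ vanishes, hence $\langle w,\psi_j^*\rangle_{S_j}=0$ for every such boundary node. This establishes the first assertion and, crucially, eliminates all boundary nodes from the sum in \eqref{eq:DefI} for $w \in W_0$, which will also streamline the stability argument. For the projection property, fix $w_h \in W_h^p$ and a node $j$: since $S_j$ is either a cell $K\in\tria$ or a face $f\in\faces_t$, the restriction $w_h|_{S_j}$ belongs to $\mathbb{P}_p(S_j)$, a space spanned by $(\psi_\ell|_{S_j})_{\ell \in \mathcal{N}^p(S_j)}$. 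Expanding in this basis and applying \eqref{eq:DualWeight} yields $\langle w_h, \psi_j^* \rangle_{S_j} = w_h(j)$, so $\mathcal{I} w_h = w_h$. Locality follows immediately from \eqref{eq:DefI}: each $\psi_j$ whose support contains $K$ has its associated $S_j$ contained in the patch $q_K$ by construction, so $(\mathcal{I}v)|_K$ depends on $v$ only through $v|_{q_K}$.

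The main work lies in the local $L^2$ stability. Starting from $(\mathcal{I}w)|_K=\sum_j \langle w,\psi_j^*\rangle_{S_j}\,\psi_j|_K$, where shape regularity bounds the number of nonzero terms uniformly, I would apply Cauchy--Schwarz to obtain $|\langle w,\psi_j^*\rangle_{S_j}| \leq \|w\|_{L^2(S_j)}\, \|\psi_j^*\|_{L^2(S_j)}$. Affine equivalence and the biorthogonality with $\psi_j|_{S_j}$ produce the scalings $\|\psi_j^*\|_{L^2(S_j)} \eqsim |S_j|^{-1/2}$ and $\|\psi_j\|_{L^2(K)} \eqsim |K|^{1/2}$. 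Thanks to the grading assumption \eqref{eq:EqualSizeNeighbors} together with the parabolic scaling \eqref{eq:ParabolicScaling}, for every interior node with cellular $S_j \in \tria$ contained in $q_K$ the measures satisfy $|K| \eqsim |S_j|$, so each term contributes at most $\|w\|_{L^2(S_j)} \leq \|w\|_{L^2(q_K)}$. Summing over the finitely many relevant indices gives \eqref{eq:LocL2Stab}.

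The main obstacle to anticipate is that for boundary nodes the biorthogonal weight is anchored on a codimension-one face, so a naive trace inequality would introduce an $H^1$ norm and ruin the $L^2$ bound. This difficulty is sidestepped by the boundary preservation argument: for $w \in W_0$ the boundary contributions vanish identically, leaving only the interior, cellular $S_j$'s whose measures scale consistently under parabolic refinement. Quantitative approximation estimates in fractional norms, needed for the Fortin operator in Section~\ref{subsec:Fortin}, will require further work; here only the elementary $L^2$ bound is at stake.
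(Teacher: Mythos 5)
Your proposal is correct and follows exactly the route the paper intends: the paper gives no detailed proof but appeals to "classical scaling arguments" in the spirit of Scott--Zhang, and your argument (boundary faces for lateral-boundary nodes plus vanishing traces for $w\in W_0$, biorthogonality for the projection property, locality via $S_j\subset\supp\psi_j\subset q_K$, and Cauchy--Schwarz with the scalings $\|\psi_j^*\|_{L^2(S_j)}\eqsim|S_j|^{-1/2}$, $\|\psi_j\|_{L^2(K)}\eqsim|K|^{1/2}$ and the grading \eqref{eq:EqualSizeNeighbors}) is precisely that standard argument adapted to the prismatic, parabolically scaled cells. In particular, your observation that the restriction to $W_0$ makes the face-based boundary functionals vanish—so that no trace inequality is needed for the $L^2$ bound—is exactly the reason the stability is stated on $W_0$ only.
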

Before we can verify the approximation properties of $\mathcal{I}$, we introduce the following version of \Poincare's inequality adapted to the spaces at hand.
Given a set $q \subset \mathbb{R}^m$ with $m\in \mathbb{N}$ and $m$-dimensional Lebesgue measure $|q|_m$, we define the integral mean of a function $g\colon q \to \mathbb{R}$ by
\begin{align*}
\langle g \rangle_q \coloneqq \dashint_q g \dx \coloneqq \frac{1}{|q|_m} \int_q g\dx.
\end{align*}

\begin{lemma}[Parabolic \Poincare{}]\label{lem:ParaPoincareH12}
Let $v\in L^2(K_t;H^1(K_x))\cap H^{1/2}(K_t;L^2(K_x))$ with time-space cylinder $K = K_t \times K_x$, where $K_t$ is an interval of length $h_t$ and $K_x \subset \mathbb{R}^d$ is some bounded domain with diameter $h_x$. We assume that $K_x$ allows for the \Poincare{} estimate $\lVert p - \langle p \rangle_{K_x}\rVert_{L^2(K_x)} \leq C_x h_x \lVert \nabla_x p \rVert_{L^2(K_x)}$ with constant $C_x < \infty$  for all $p\in H^1(K_x)$. Then we have
\begin{align*}
\lVert v - \langle v \rangle_K \rVert^2 _{L^2(K)}\leq C_x^2  h^2_x\, \lVert \nabla_x v \rVert^2_{L^2(K)} + h_t\, \lvert v \rvert^2_{H^{1/2}(K_t;L^2(K_x))}.
\end{align*}
\end{lemma}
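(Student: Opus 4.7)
The plan is to split $v - \langle v \rangle_K$ into a spatial oscillation part and a temporal oscillation part, exploit their $L^2(K)$-orthogonality, and then apply the spatial Poincar\'e inequality to the first piece and a one-dimensional variance identity to the second.

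Concretely, define the spatial mean $\bar v(t) \coloneqq \langle v(t,\bigcdot)\rangle_{K_x}$ for a.e.\ $t \in K_t$, so that $\langle v \rangle_K = \langle \bar v \rangle_{K_t}$. Decompose
\begin{equation*}
v - \langle v \rangle_K = \big(v - \bar v\big) + \big(\bar v - \langle \bar v\rangle_{K_t}\big).
\end{equation*}
Since the first summand has vanishing spatial integral on $K_x$ for almost every $t$, while the second depends only on $t$, these two functions are $L^2(K)$-orthogonal. Hence
\begin{equation*}
\lVert v - \langle v \rangle_K\rVert_{L^2(K)}^2 = \lVert v - \bar v\rVert_{L^2(K)}^2 + \lVert \bar v - \langle \bar v\rangle_{K_t}\rVert_{L^2(K)}^2.
\end{equation*}

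For the first term I would apply the assumed spatial Poincar\'e inequality pointwise in $t$ and integrate in time to obtain $\lVert v - \bar v\rVert_{L^2(K)}^2 \leq C_x^2 h_x^2 \lVert \nabla_x v\rVert_{L^2(K)}^2$, which already matches the first contribution on the right-hand side.

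For the second term, I would use the standard variance identity
\begin{equation*}
\int_{K_t}\big|\bar v(t) - \langle \bar v\rangle_{K_t}\big|^2 \dt = \frac{1}{2|K_t|}\int_{K_t}\int_{K_t}|\bar v(t) - \bar v(s)|^2 \dt\ds,
\end{equation*}
apply Jensen's inequality to $|\bar v(t)-\bar v(s)|^2 \leq |K_x|^{-1}\lVert v(t,\bigcdot)-v(s,\bigcdot)\rVert_{L^2(K_x)}^2$, multiply by $|K_x|$, and insert the trivial bound $|t-s|^2 \leq h_t^2 = h_t \cdot |K_t|$ (up to the constant hidden in $\eqsim$; actually $h_t = |K_t|$ here). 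This produces
\begin{equation*}
\lVert \bar v - \langle \bar v\rangle_{K_t}\rVert_{L^2(K)}^2 \leq \frac{h_t}{2}\int_{K_t}\int_{K_t}\frac{\lVert v(t,\bigcdot) - v(s,\bigcdot)\rVert_{L^2(K_x)}^2}{|t-s|^2}\dt\ds = \frac{h_t}{2}\,\lvert v\rvert_{H^{1/2}(K_t;L^2(K_x))}^2,
\end{equation*}
and adding the two estimates yields the claim. The only nontrivial point is the orthogonal splitting; everything else is a one-line calculation, so I do not expect a genuine obstacle.
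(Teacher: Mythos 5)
Your proposal is correct and follows essentially the same route as the paper's proof: the same orthogonal (Pythagorean) splitting of $v-\langle v\rangle_K$ into the spatial oscillation $v-\langle v\rangle_{K_x}$ and the temporal oscillation of the spatial mean, the assumed Poincar\'e inequality applied slicewise in time, and Jensen together with $|t-s|\leq h_t$ for the temporal part. Your use of the variance identity instead of Jensen applied directly to the iterated mean is only a cosmetic variation (it even yields the slightly sharper factor $h_t/2$), so there is nothing to correct.
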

\begin{proof}
Let $v\in L^2(K_t;H^1(K_x))\cap H^{1/2}(K_t;L^2(K_x))$ with time-space cylinder $K=K_t\times K_x$. The Pythagorean theorem and Fubini's theorem yield
\begin{align}\label{eq:Triangle1}
\lVert v - \langle v \rangle_K \rVert_{L^2(K)}^2 = \lVert v - \langle v \rangle_{K_x} \rVert_{L^2(K)}^2 
+ \lVert \langle v - \langle v \rangle_{K_t} \rangle_{K_x}\rVert^2_{L^2(K)}.
\end{align}
The first term is bounded due to \Poincare's inequality by 
\begin{align*}
\lVert v - \langle v \rangle_{K_x} \rVert^2_{L^2(K)} \leq C_x^2 h^2_x\, \lVert \nabla_x v \rVert_{L^2(K)}^2.
\end{align*}
Using Jensen's inequality for the squared second term in \eqref{eq:Triangle1} leads to 
\begin{align*}
 \lVert \langle v - \langle v \rangle_{K_t} \rangle_{K_x}\rVert^2_{L^2(K)} & = \int_K \Big|\dashint_{K_x} \dashint_{K_t} v(t,x) - v(s,x) \ds\dx\Big|^2 \mathrm{d}(t,y)\\
 & \leq \int_K  \dashint_{K_x} \dashint_{K_t} |v(t,x) - v(s,x)|^2 \ds\dx\, \mathrm{d}(t,y)\\
& = \int_{K_t} \dashint_{K_t} \lVert v(t) - v(s)\rVert_{L^2(K_x)}^2 \ds\dt\\
 &\leq h_t \int_{K_t} \int_{K_t} \frac{\lVert v(t) - v(s)\rVert_{L^2(K_x)}^2}{|t-s|^2} \ds\dt.
\end{align*}
Combining these results concludes the proof.
\end{proof}
Adaptively refined meshes require the use of the parabolic \Poincare{} inequality for non-cylindrical areas $q \subset \overline{Q}$. In our partitions these areas are build by unions of overlapping time-space cylinders $q_0,\dots,q_L$ with $L\in \mathbb{N}$ in the sense that
\begin{equation}\label{eq:decompQ}
q = \bigcup_{\ell=0}^L q_\ell\quad\text{and}\quad
|q_j| \eqsim \left|\bigcup_{\ell=0}^{j-1} q_\ell \cap q_{j}\right|\qquad \text{for all }j=1,\dots,L.
\end{equation} 
The generalized parabolic \Poincare{} inequality uses for all $v\in W$ with 
\begin{equation*}
W \coloneqq L^2(\cJ;H^1(\Omega)) \cap H^{1/2}(\cJ;L^2(\Omega))
\end{equation*}
 the semi-norm  
\begin{equation*}
\lvert v \rvert^2_{H^{1/2}L^2(q)} \coloneqq \int_\mathcal{J} \int_\mathcal{J} \int_\Omega \indicator_q(x,t) \indicator_q(x,s) \frac{|v(x,t) - v(x,s)|^2}{|t-s|^2} \dx \ds \dt.
\end{equation*}
\begin{lemma}[Generalized parabolic \Poincare{}]\label{lem:GenParaPoincareH12}
Suppose that $q \subset Q$ is some area covered by time-space cylinders $q_1,\dots,q_L$ as in \eqref{eq:decompQ}. Let $h_t$ and $h_x$ denote the diameters of $q$ in time and space direction, respectively.
Then we have for all $v\in W$ the parabolic \Poincare{} inequality
\begin{equation*}
\lVert v - \langle v \rangle_q \rVert^2_{L^2(q)} \lesssim  h_x^2 \, \lVert \nabla_x v \rVert_{L^2(q)}^2 + h_t\, \lvert v \rvert^2_{H^{1/2}L^2(q)}.
\end{equation*}
The hidden constant depends on $L$, the hidden constant in \eqref{eq:decompQ}, and the \Poincare{} constant in space $C_x$ for each time-space cylinder.
\end{lemma}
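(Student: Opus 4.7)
I would proceed by induction on $L$, using Lemma~\ref{lem:ParaPoincareH12} as the base case and a gluing argument in the inductive step driven by the overlap condition \eqref{eq:decompQ}. The base case $L=0$ is immediate since $q=q_0$ is a single time-space cylinder: the covering condition is vacuous and Lemma~\ref{lem:ParaPoincareH12} delivers the bound with semi-norm $\lvert v \rvert^2_{H^{1/2}(q_{0,t};L^2(q_{0,x}))}$, which equals $\lvert v \rvert^2_{H^{1/2}L^2(q_0)}$ by Fubini.

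For the inductive step set $q' \coloneqq \bigcup_{\ell=0}^{L-1} q_\ell$, so that $q = q' \cup q_L$ and the covering $q_0,\dots,q_{L-1}$ of $q'$ still satisfies \eqref{eq:decompQ} with one fewer piece. Since $\langle v\rangle_q$ is the $L^2(q)$-best constant approximation, I would first estimate
\begin{equation*}
\lVert v - \langle v\rangle_q\rVert_{L^2(q)}^2 \leq \lVert v - \langle v\rangle_{q'}\rVert_{L^2(q)}^2 \leq \lVert v - \langle v\rangle_{q'}\rVert_{L^2(q')}^2 + \lVert v - \langle v\rangle_{q'}\rVert_{L^2(q_L)}^2.
\end{equation*}
The first summand is handled directly by the inductive hypothesis applied to $q'$. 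For the second, insert $\pm\langle v\rangle_{q_L}$ to obtain
\begin{equation*}
\lVert v - \langle v\rangle_{q'}\rVert_{L^2(q_L)}^2 \leq 2\lVert v - \langle v\rangle_{q_L}\rVert_{L^2(q_L)}^2 + 2|q_L|\,\bigl|\langle v\rangle_{q_L}-\langle v\rangle_{q'}\bigr|^2,
\end{equation*}
where the first piece is controlled by Lemma~\ref{lem:ParaPoincareH12} on $q_L$ itself.

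The key technical step is the estimate for the difference of averages. On the overlap $q_L \cap q'$ I would write $\langle v\rangle_{q_L} - \langle v\rangle_{q'} = (\langle v\rangle_{q_L} - v) + (v - \langle v\rangle_{q'})$, square, integrate, and use Cauchy--Schwarz to obtain
\begin{equation*}
|q_L \cap q'|\,\bigl|\langle v\rangle_{q_L}-\langle v\rangle_{q'}\bigr|^2 \lesssim \lVert v - \langle v\rangle_{q_L}\rVert_{L^2(q_L)}^2 + \lVert v - \langle v\rangle_{q'}\rVert_{L^2(q')}^2.
\end{equation*}
The overlap condition \eqref{eq:decompQ} ensures $|q_L|/|q_L \cap q'| \lesssim 1$, so that multiplying through by $|q_L|/|q_L \cap q'|$ gives $|q_L|\,|\langle v\rangle_{q_L}-\langle v\rangle_{q'}|^2$ bounded by the two right-hand side terms with a constant depending only on the hidden constant in \eqref{eq:decompQ}.

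Putting everything together, I obtain
\begin{equation*}
\lVert v - \langle v\rangle_q\rVert_{L^2(q)}^2 \lesssim \lVert v - \langle v\rangle_{q'}\rVert_{L^2(q')}^2 + \lVert v - \langle v\rangle_{q_L}\rVert_{L^2(q_L)}^2,
\end{equation*}
and I finish by applying the inductive hypothesis to the first term and Lemma~\ref{lem:ParaPoincareH12} to the second, using that the local spatial/temporal diameters are bounded by the global $h_x, h_t$ of $q$, and that $\lvert v \rvert^2_{H^{1/2}L^2(q')} + \lvert v \rvert^2_{H^{1/2}L^2(q_L)} \leq 2\lvert v \rvert^2_{H^{1/2}L^2(q)}$ by pointwise monotonicity of the indicator weights. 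I expect the main obstacle to be merely bookkeeping: tracking that the multiplicative constant, which doubles with each inductive step, remains a function of $L$ alone (together with the constants from \eqref{eq:decompQ} and the spatial Poincar\'e constants of the individual simplices), as claimed.
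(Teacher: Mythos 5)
Your proposal is correct and follows essentially the same route as the paper: peel off the last cylinder $q_L$, replace $\langle v\rangle_q$ by $\langle v\rangle_{q'}$, control the difference of the two averages through the overlap $q'\cap q_L$ using the measure comparability from \eqref{eq:decompQ}, apply the single-cylinder Lemma~\ref{lem:ParaPoincareH12} on $q_L$, and recurse on $q'$. The only cosmetic differences are that the paper uses the Pythagorean identity and Jensen's inequality with the overlap average as an intermediate quantity where you use a triangle inequality with factor $2$ and a direct Cauchy--Schwarz estimate on the overlap, which yields the same bound with the same constant dependence on $L$, the overlap constant, and $C_x$.
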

\begin{proof}
Let $v\in W$ and suppose $q \subset Q$ has a decomposition $q_0,\dots,q_L$ as in \eqref{eq:decompQ}. It holds, with $q^- \coloneqq  \bigcup_{\ell=0}^{L-1} q_\ell$, 
\begin{align*}
\lVert v - \langle v \rangle_q \rVert_{L^2(q)}^2 \leq  \lVert v - \langle v \rangle_{q^-} \rVert_{L^2(q)}^2 \leq \lVert v - \langle v \rangle_{q^-} \rVert_{L^2(q_L)}^2 + \lVert v - \langle v \rangle_{q^-} \rVert_{L^2(q^-)}^2.
\end{align*}
The first term satisfies 
\begin{align*}
\lVert v - \langle v \rangle_{q^-} \rVert_{L^2(q_L)}^2 = \lVert v - \langle v \rangle_{q_L} \rVert_{L^2(q_L)}^2 + \lVert \langle v \rangle_{q^-} - \langle v \rangle_{q_L} \rVert_{L^2(q_L)}^2.
\end{align*}
Jensen's inequality and \eqref{eq:decompQ} yield for the second addend, with $q^\textup{int}_L \coloneqq q^-\cap q_L$,
\begin{align*}
&\lVert \langle v \rangle_{q^-} - \langle v \rangle_{q_L} \rVert_{L^2(q_L)}^2 \leq 2 \, \lVert \langle v \rangle_{q^-} - \langle v \rangle_{q^\textup{int}_L} \rVert_{L^2(q_L)}^2+2\, \lVert \langle v \rangle_{q^\textup{int}_L} - \langle v \rangle_{q_L} \rVert_{L^2(q_L)}^2 \\
& \qquad = 2 \int_{q_L} \Big| \dashint_{q_L^\textup{int}} v  - \langle v \rangle_{q^-}\dy \Big|^2 \dz + 2 \int_{q_L} \Big| \dashint_{q_L^\textup{int}} v  - \langle v \rangle_{q_L}\dy \Big|^2 \dz \\
&\qquad \leq 2 \int_{q_L} \dashint_{q^\textup{int}_L} |v -\langle v \rangle_{q^-}|^2 \dy\dz + 2 \int_{q_L} \dashint_{q^\textup{int}_L} |v -\langle v \rangle_{q_L}|^2 \dy\dz\\
&\qquad = \frac{2\, |q_L|}{|q_L^\textup{int}|} \big( \lVert v -\langle v \rangle_{q^-} \rVert_{L^2(q^\textup{int}_L)}^2 + \lVert v -\langle v \rangle_{q_L} \rVert_{L^2(q_L)}^2\big)\\
&\qquad\leq \frac{2\, |q_L|}{|q_L^\textup{int}|} \big( \lVert v -\langle v \rangle_{q^-} \rVert_{L^2(q^-)}^2 + \lVert v -\langle v \rangle_{q_L} \rVert_{L^2(q_L)}^2\big).
\end{align*}
The contributions $\lVert v -\langle v \rangle_{q_L} \rVert_{L^2(q_L)}^2$ can be bounded by Lemma~\ref{lem:ParaPoincareH12}. We proceed inductively to bound the term $\lVert v -\langle v \rangle_{q^-} \rVert_{L^2(q^-)}$.
\end{proof}
\begin{remark}[Boundary]\label{rem:BoundaryPoincare}
Suppose that $q_L$ in \eqref{eq:decompQ} touches the lateral boundary of $Q$ in the sense that $q_L \cap \cJ \times \partial \Omega \neq \emptyset$. For any $v\in W_0$ one has the identity
\begin{align*}
\lVert v \rVert^2_{L^2(q)} = \lVert v - \langle v\rangle_q \rVert^2_{L^2(q)} + \lVert \langle v\rangle_q \rVert_{L^2(q)}^2.
\end{align*}
The first term can be bounded by Lemma~\ref{lem:GenParaPoincareH12}. Similar arguments as in the previous proof bound the second term by
\begin{align*}
\lVert \langle v\rangle_q \rVert_{L^2(q)} \leq \lVert \langle v\rangle_q - \langle v\rangle_{q_L^\textup{int}} \rVert_{L^2(q)} + \lVert \langle v\rangle_{q_L^\textup{int}} \rVert_{L^2(q)} \lesssim \lVert v - \langle v\rangle_q \rVert_{L^2(q)} + \lVert v \rVert_{L^2(q_L)}.
\end{align*}
Since \Poincare's inequality yields $\lVert v \rVert_{L^2(q_L)} \lesssim h_{K,x} \lVert \nabla_x v\rVert_{L^2(q_L)}$, we obtain
\begin{align*}
\lVert v  \rVert^2_{L^2(q)} \lesssim  h_x^2 \, \lVert \nabla_x v \rVert_{L^2(q)}^2 + h_t\, \lvert v \rvert^2_{H^{1/2}L^2(q)}.
\end{align*}
\end{remark}
Combining the properties of the Scott--Zhang operator with the parabolic \Poincare{} inequality leads to the following approximation result. 
\begin{lemma}[Local approximation property]\label{lem:locApx}
We have for all $K\in \tria$ 
and $w \in W_0$
\begin{align*}
\lVert w - \mathcal{I} w \rVert_{L^2(K)} &\lesssim h_{K,x} \lVert \nabla_x w \rVert_{L^2(q_K)} + h_{K,t}^{1/2}\, \lvert w \rvert_{H^{1/2}L^2(q_K)},\\
\lVert w - \mathcal{I} w \rVert_{L^2(K)} &\lesssim h_{K,x} \lVert \nabla_x (w - \mathcal{I} w) \rVert_{L^2(q_K)} + h_{K,t}^{1/2}\, \lvert w- \mathcal{I} w \rvert_{H^{1/2}L^2(q_K)}.
\end{align*}
\end{lemma}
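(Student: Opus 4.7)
The plan is to run the classical Scott--Zhang localization argument, adapted to the parabolic scaling. Three ingredients are needed: (i) the invariance $\mathcal{I} c = c$ for every constant $c \in \R$, which follows from the projection property in Lemma~\ref{lem:basicpropsSZ} since constants belong to $W_h^p$; (ii) the local $L^2$ stability \eqref{eq:LocL2Stab}, which by its scaling-based proof applies also to functions of the form $w - c$ (the dual weights $\psi_j^*$ act on $L^2$-traces that are well-defined for $w \in W_0$); and (iii) the generalized parabolic Poincaré inequality of Lemma~\ref{lem:GenParaPoincareH12}, together with its boundary variant in Remark~\ref{rem:BoundaryPoincare}.

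For the first inequality I would pick a constant $c \in \R$ reproduced by $\mathcal{I}$: set $c := \langle w \rangle_{q_K}$ if $q_K$ does not meet $\overline{\cJ} \times \partial \Omega$, and $c := 0$ otherwise. Because $\mathcal{I} c = c$, the triangle inequality combined with the local $L^2$ stability yields
\[
\lVert w - \mathcal{I} w \rVert_{L^2(K)} = \lVert (w - c) - \mathcal{I}(w - c) \rVert_{L^2(K)} \lesssim \lVert w - c \rVert_{L^2(q_K)}.
\]
The patch $q_K$ is a union of finitely many neighboring time-space cylinders of equivalent size (by Assumption~\ref{ass:Partition} and in particular the grading property \eqref{eq:EqualSizeNeighbors}), so it admits an enumeration satisfying \eqref{eq:decompQ} with $L$ uniformly bounded and with overall diameters comparable to $h_{K,x}$ in space and $h_{K,t}$ in time. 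Applying Lemma~\ref{lem:GenParaPoincareH12} in the interior case, respectively Remark~\ref{rem:BoundaryPoincare} when $q_K$ touches the lateral boundary (where the choice $c=0$ is used), gives the claimed estimate.

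The second inequality is an immediate consequence of the first applied to $w - \mathcal{I} w \in W_0$: since $\mathcal{I}$ is idempotent on $W_h^p$, one has $\mathcal{I}(w - \mathcal{I} w) = 0$, so
\[
\lVert w - \mathcal{I} w \rVert_{L^2(K)} = \lVert (w - \mathcal{I} w) - \mathcal{I}(w - \mathcal{I} w) \rVert_{L^2(K)},
\]
and using the first bound with $w$ replaced by $w - \mathcal{I} w$ finishes the argument. The main obstacle I anticipate is purely organizational, namely verifying the covering condition \eqref{eq:decompQ} for $q_K$ with bounded $L$ in both interior and boundary situations; beyond that, everything is a bookkeeping combination of Scott--Zhang stability with the parabolic Poincaré inequalities already established.
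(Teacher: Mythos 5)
Your proposal is correct and follows essentially the same route as the paper's proof: subtract a constant reproduced by $\mathcal{I}$ (the patch mean $\langle w\rangle_{q_K}$ in the interior case, $0$ when $q_K$ meets the lateral boundary), use the local $L^2$ stability from Lemma~\ref{lem:basicpropsSZ}, and conclude with Lemma~\ref{lem:GenParaPoincareH12} resp.\ Remark~\ref{rem:BoundaryPoincare}, while the second estimate is obtained exactly as in the paper by applying the first one to $w-\mathcal{I}w$ using $\mathcal{I}(w-\mathcal{I}w)=0$.
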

\begin{proof}
Let $K\in \tria$ and $w \in W_0$.
By Assumption~\ref{ass:Partition} there exists a decomposition of $q = q_K$ into time-space cylinders $q_1,\dots,q_L$ as in \eqref{eq:decompQ}. If the patch $q_K$ touches the lateral boundary of $Q$ in the sense that $q_K \cap \cJ\times \partial \Omega \neq \emptyset$, Remark~\ref{rem:BoundaryPoincare} and the local $L^2$ stability of $\mathcal{I}$ for functions in $W_0$ yield
\begin{align*}
&\lVert w - \mathcal{I} w \rVert_{L^2(K)} \leq \lVert w \rVert_{L^2(K)} + \lVert \mathcal{I} w \rVert_{L^2(K)} \lesssim \lVert w \rVert_{L^2(q_K)}\\
&\qquad \lesssim h_{K,x} \lVert \nabla_x w \rVert_{L^2(q_K)} + h_{K,t}^{1/2} \lvert w\rvert_{H^{1/2}L^2(q_K)}.
\end{align*}
If $q_K \cap \cJ\times \partial \Omega = \emptyset$, the identity $\mathcal{I}1 = 1$, the triangle inequality, the local $L^2$ stability of $\mathcal{I}$ for functions in $W_0$, and Lemma~\ref{lem:GenParaPoincareH12} yield
\begin{align*}
&\lVert w - \mathcal{I} w \rVert_{L^2(K)} \leq \lVert w - \langle w \rangle_{q_K}\rVert_{L^2(K)} + \lVert \mathcal{I} (w-\langle w \rangle_{q_K})\rVert_{L^2(K)} \lesssim \lVert w-\langle w \rangle_{q_K} \rVert_{L^2(q_K)}\\
&\qquad \lesssim h_{K,x} \lVert \nabla_x w \rVert_{L^2(q_K)} + h_{K,t}^{1/2} \lvert w\rvert_{H^{1/2}L^2(q_K)}.
\end{align*}
This proves the first estimate.
 
The second estimate follows by the projection property allowing us to replace $w$ by $w-\mathcal{I} w$ in the first estimate of the lemma.
\end{proof}
Recall the definition of $q_K^t$ in \eqref{eq:def_qKt}.
Combining Lemma~\ref{lem:localizedGeneral} and \ref{lem:locApx} with the finite overlap of patches and the parabolic scaling yields the following localization of the norm in $W$ defined by
\begin{equation}\label{eq:defNormW}
\lVert \bigcdot \rVert_W \coloneqq (\lVert \nabla_x \bigcdot \rVert^2_{L^2(Q)} +  \lvert \bigcdot \rvert_{H^{1/2}(\cJ;L^2(\Omega))}^2)^{1/2}.
\end{equation}
\begin{theorem}[Localization of interpolation error]\label{thm:LocalizationIIold}
The interpolation error $\delta \coloneqq w - \mathcal{I}w$ with $w\in W_0$ localizes in the sense that 
\begin{align*}
\lVert \delta \rVert_W^2& \eqsim \sum_{K\in \tria}\lVert  \nabla_x \delta \rVert_{L^2(K)}^2 + \lvert  \delta \rvert_{H^{1/2}L^2(q^t_{K})}^2 + h_{K,t}^{-1} \lVert \delta \rVert_{L^2(K)}^2\\
& \eqsim \sum_{K\in \tria}\lVert  \nabla_x \delta \rVert_{L^2(K)}^2 + \lvert  \delta \rvert_{H^{1/2}L^2(q_K)}^2.
\end{align*}
\end{theorem}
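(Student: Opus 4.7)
\medskip

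\noindent\textbf{Proof plan.}
The plan is to sandwich both sums between constants times $\lVert \delta \rVert_W^2$ and then conclude the claimed equivalences by transitivity. Throughout, I will systematically exploit (i) Lemma~\ref{lem:localizedGeneral} for the global-to-local direction of the $H^{1/2}(\cJ;L^2(\Omega))$ semi-norm, (ii) the local approximation bound of Lemma~\ref{lem:locApx} combined with the parabolic scaling \eqref{eq:ParabolicScaling} to convert $L^2$-mass terms back into gradient and fractional contributions, and (iii) the bounded overlap of the patches $q_K^t$ and $q_K$ (guaranteed by Assumption~\ref{ass:Partition} and \eqref{eq:LevelDist}) to pass between local and global quantities without losing constants.

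\emph{First equivalence, upper bound of $\lVert \delta \rVert_W^2$.} Splitting $\lVert \delta \rVert_W^2$ into the gradient and the fractional part, the gradient part decomposes cell-wise as $\sum_{K\in\tria} \lVert \nabla_x \delta \rVert^2_{L^2(K)}$, while Lemma~\ref{lem:localizedGeneral} immediately bounds the fractional part by $\sum_K |\delta|^2_{H^{1/2}L^2(q_K^t)} + h_{K,t}^{-1}\lVert \delta \rVert^2_{L^2(K)}$ (noting that $|v|^2_{H^{1/2}(q_{K,t}^t;L^2(K_x))}=|v|^2_{H^{1/2}L^2(q_K^t)}$ because $q_K^t = q_{K,t}^t \times K_x$).

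\emph{First equivalence, reverse direction.} The cell-wise gradient terms sum trivially to $\lVert \nabla_x \delta\rVert^2_{L^2(Q)}$. For the fractional contributions, the bounded overlap of the patches $q_K^t$ yields $\sum_K |\delta|^2_{H^{1/2}L^2(q_K^t)} \lesssim |\delta|^2_{H^{1/2}(\cJ;L^2(\Omega))}$ simply by restricting the double integration region. For the remaining $L^2$-mass terms I invoke the second estimate in Lemma~\ref{lem:locApx} applied to $w \in W_0$ (using that $\delta = w - \mathcal{I} w$), namely
\begin{equation*}
\lVert \delta \rVert^2_{L^2(K)} \lesssim h_{K,x}^2 \lVert \nabla_x \delta \rVert^2_{L^2(q_K)} + h_{K,t}\, |\delta|^2_{H^{1/2}L^2(q_K)}.
\end{equation*}
Dividing by $h_{K,t}$ and using the parabolic scaling $h_{K,x}^2 \eqsim h_{K,t}$ turns this into $h_{K,t}^{-1}\lVert \delta \rVert^2_{L^2(K)} \lesssim \lVert \nabla_x \delta\rVert^2_{L^2(q_K)} + |\delta|^2_{H^{1/2}L^2(q_K)}$. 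Summing over $K\in\tria$ and exploiting the bounded overlap of the full patches $q_K$ controls this contribution by $\lVert \delta \rVert_W^2$, completing the first equivalence.

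\emph{Second equivalence.} Since $q_K^t \subset q_K$, I have $|\delta|^2_{H^{1/2}L^2(q_K^t)} \leq |\delta|^2_{H^{1/2}L^2(q_K)}$, and the argument of the previous paragraph gives $h_{K,t}^{-1}\lVert \delta\rVert^2_{L^2(K)}\lesssim \lVert \nabla_x \delta\rVert^2_{L^2(q_K)} + |\delta|^2_{H^{1/2}L^2(q_K)}$. Summing, together with bounded overlap of the $q_K$, yields that the first sum is dominated by the second, and conversely the second sum is dominated by $\lVert \nabla_x \delta\rVert^2_{L^2(Q)} + |\delta|^2_{H^{1/2}(\cJ;L^2(\Omega))} \eqsim \lVert \delta\rVert_W^2$ by bounded overlap, which in turn is $\lesssim$ the first sum by the first equivalence. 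This closes the chain.

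\emph{Main obstacle.} The only non-trivial step is the conversion of the cell-wise $L^2$-mass term $h_{K,t}^{-1}\lVert \delta \rVert^2_{L^2(K)}$ into differential quantities on the patch; the key ingredients are precisely the projection-corrected Poincaré-type bound of Lemma~\ref{lem:locApx} and the parabolic scaling $h_{K,t}\eqsim h_{K,x}^2$ guaranteed by Assumption~\ref{ass:Partition}. Without the parabolic scaling the two terms would not balance and the equivalence would degrade.
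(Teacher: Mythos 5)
Your proof is correct and follows exactly the route the paper outlines in the sentence preceding the theorem: combine Lemma~\ref{lem:localizedGeneral} with the second estimate of Lemma~\ref{lem:locApx}, using the finite overlap of the patches $q_K^t$ and $q_K$ and the parabolic scaling \eqref{eq:ParabolicScaling}. You have filled in the details faithfully, including the identification $|\delta|^2_{H^{1/2}(q_{K,t}^t;L^2(K_x))}=|\delta|^2_{H^{1/2}L^2(q_K^t)}$ and the inclusion $q_K^t \subset q_K$ used in the second equivalence.
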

In order to obtain local best-approximation properties of $\mathcal{I}$, we introduce the following inverse estimate.
\begin{lemma}[Inverse estimate]\label{lem:inverseEst}
For any $v_h \in W_h^p$ and $K\in \tria$ one has the inverse estimate, with hidden constant depending only on the polynomial degree $p_t$ and the equivalence constants in \eqref{eq:EqualSizeNeighbors}, 
\begin{equation*}
\lvert v_h \rvert_{H^{1/2}L^2(q_{K}^t)} \lesssim h_{K,t}^{-1/2}\, \lVert v_h\rVert_{L^2(q_K^t)}.
\end{equation*}  
\end{lemma}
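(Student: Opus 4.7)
The plan is to reduce the two-dimensional inverse estimate to a one-dimensional one, exploiting the cylindrical structure of $q_K^t$. Assumption~\ref{ass:Partition}\ref{itm:TimeSpaceCylNeighbors} guarantees that $q_K^t = q_{K,t}^t \times K_x$ with $q_{K,t}^t$ an interval of length $\eqsim h_{K,t}$, so Fubini's theorem gives
\begin{equation*}
\lvert v_h\rvert^2_{H^{1/2}L^2(q_K^t)} = \int_{K_x} \lvert v_h(\bigcdot,x)\rvert^2_{H^{1/2}(q_{K,t}^t)} \dx,
\end{equation*}
and it will be enough to establish the pointwise bound $\lvert v_h(\bigcdot,x)\rvert^2_{H^{1/2}(q_{K,t}^t)} \lesssim h_{K,t}^{-1}\,\lVert v_h(\bigcdot,x)\rVert^2_{L^2(q_{K,t}^t)}$ for almost every $x\in K_x$ and then integrate over $K_x$.

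The key observation is that for (a.e.)~such $x$, the slice $t\mapsto v_h(t,x)$ is continuous on $q_{K,t}^t$ and piecewise polynomial of degree $\leq p_t$ on a partition consisting of at most three subintervals (namely $K_t$ plus possibly one temporal-neighbour interval on each side, which by \ref{itm:TimeSpaceCylNeighbors} is uniquely defined), each of length $\eqsim h_{K,t}$ thanks to \eqref{eq:EqualSizeNeighbors}. A scaling argument applied to the classical embedding $H^1(0,1)\hookrightarrow H^{1/2}(0,1)$, combined with the scale invariance of the $H^{1/2}$ seminorm, yields for any $g\in H^1(I)$ on an interval $I$ of length $h$ the estimate
\begin{equation*}
\lvert g\rvert^2_{H^{1/2}(I)} \lesssim h^{-1}\,\lVert g\rVert^2_{L^2(I)} + h\,\lVert g'\rVert^2_{L^2(I)}.
\end{equation*}
Applied to $I=q_{K,t}^t$ and $g=v_h(\bigcdot,x)$, the task reduces to estimating $\lVert \partial_t v_h(\bigcdot,x)\rVert^2_{L^2(q_{K,t}^t)}$, which is handled by the classical polynomial inverse estimate $\lVert g'\rVert^2_{L^2(\tilde I)} \lesssim |\tilde I|^{-2}\,\lVert g\rVert^2_{L^2(\tilde I)}$ applied subinterval-by-subinterval $\tilde I$ of $q_{K,t}^t$ and summed, producing a bound of the form $h_{K,t}^{-2}\,\lVert v_h(\bigcdot,x)\rVert^2_{L^2(q_{K,t}^t)}$. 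Substituting back and integrating over $K_x$ yields the asserted estimate after taking square roots.

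The main technical subtlety is ensuring that the constants produced by this scaling procedure are independent of the particular cell $K$: this requires that the number of subintervals of $q_{K,t}^t$ is uniformly bounded and that all of them have mutually equivalent lengths, both of which are exactly what the grading assumptions \eqref{eq:EqualSizeNeighbors} and \ref{itm:TimeSpaceCylNeighbors} provide (and which in the adaptive setting are in turn guaranteed by the level-compatibility condition \eqref{eq:LevelDist}). With these ingredients, all hidden constants depend only on $p_t$ and the shape-regularity constants of Section~\ref{subsec:tria}.
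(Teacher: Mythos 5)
Your proposal is correct and follows essentially the same route as the paper's proof: both reduce to the one-dimensional slices $t\mapsto v_h(t,x)$, which by Assumption~\ref{ass:Partition}\ref{itm:TimeSpaceCylNeighbors} and \eqref{eq:EqualSizeNeighbors} are continuous piecewise polynomials on at most three comparable subintervals, establish the bound $\lvert v_h(\bigcdot,x)\rvert^2_{H^{1/2}}\lesssim h_{K,t}^{-1}\lVert v_h(\bigcdot,x)\rVert^2_{L^2}$ by scaling, and integrate over $K_x$. Your only deviation is that you make the paper's ``scaling arguments'' explicit through the scaled embedding $\lvert g\rvert^2_{H^{1/2}(I)}\lesssim h^{-1}\lVert g\rVert^2_{L^2(I)}+h\lVert g'\rVert^2_{L^2(I)}$ combined with the elementwise polynomial inverse estimate, which is a valid instantiation of the same argument.
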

\begin{proof}
Let $v_h \in W_h^p$ and let $x\in K_x$ with $K=K_t\times K_x\in \tria$. The function $v_h(\bigcdot,x)$ is a piece-wise polynomial with respect to the quasi-uniform (see~\eqref{eq:EqualSizeNeighbors}) underlying partition $\lbrace K_{t,1},K_{t,2},K_{t,3} \rbrace$ of $q_{K_t}(x)$. Scaling arguments yield
\begin{align*}
\lvert v_h(\bigcdot,x) \rvert_{H^{1/2}(q_{K_t}(x))}^2 &= \sum_{j = 1}^3 \sum_{k=1}^3 \int_{K_{t,j}} \int_{K_{t,k}} \frac{|v_h(t,x) -v_h(s,x) |^2}{|t-s|^2} \dt \ds \\
&\lesssim h_{K,t}^{-1} \, \lVert v_h(\bigcdot,x)  \rVert_{L^2(q_{K_t}(x))}^2.
\end{align*}
Integrating over all $x\in K_x$ concludes the proof.
\end{proof}
With the inverse estimate and the local $L^2$-stability of $\mathcal{I}$ we can bound the addends of the localized interpolation error by best-approximations as shown in the following lemma. The result involves the extended patch 
\begin{equation*}
q(q_K^t) \coloneqq \bigcup \lbrace q_{K'} \colon K'\in \tria\text{ with }\textup{int}(K') \cap q_K^t \neq \emptyset\rbrace.
\end{equation*}
\begin{lemma}[Local quasi-optimality]\label{lem:LocalQuasiOpt}
The interpolation error $\delta \coloneqq w - \mathcal{I} w$ with $w\in W_0$ satisfies on each $K\in \tria$ the local quasi-optimality result
\begin{align*}
&\lVert  \nabla_x \delta \rVert_{L^2(K)}^2 + \lvert  \delta \rvert_{H^{1/2}L^2(q^t_K))}^2 + h_{K,t}^{-1} \lVert \delta \rVert_{L^2(K)}^2 \\
& \qquad\quad  \lesssim \min_{w_h \in W^p_{h,0}} \lVert  \nabla_x (w-w_h) \rVert_{L^2(q(q_K^t))}^2 + \lvert w - w_h \rvert^2_{H^{1/2}L^2(q(q_K^t))}.
\end{align*}
\end{lemma}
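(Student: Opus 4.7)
The plan is to exploit the projection property $\mathcal{I}w_h = w_h$ for $w_h \in W_{h,0}^p$ (Lemma~\ref{lem:basicpropsSZ}). Fixing an arbitrary $w_h \in W_{h,0}^p$ and setting $\eta \coloneqq w - w_h \in W_0$, one has $\delta = w - \mathcal{I}w = \eta - \mathcal{I}\eta$. I would then bound each of the three terms on the left-hand side by using a triangle inequality to isolate the $\mathcal{I}\eta$ contribution, control it via an inverse estimate together with the local $L^2$-stability of $\mathcal{I}$, and apply the parabolic \Poincare{} inequality of Lemma~\ref{lem:GenParaPoincareH12} together with the parabolic scaling $h_{K,t} \eqsim h_{K,x}^2$ to arrive at a bound involving only norms of $\eta$ on the extended patch $q(q_K^t)$.

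The $L^2$ contribution is the easiest: Lemma~\ref{lem:locApx} applied directly to $\eta$ yields
\[
\lVert \delta \rVert_{L^2(K)}^2 \lesssim h_{K,x}^2\, \lVert \nabla_x \eta \rVert_{L^2(q_K)}^2 + h_{K,t}\, \lvert \eta \rvert_{H^{1/2}L^2(q_K)}^2,
\]
so multiplying by $h_{K,t}^{-1}$ and invoking the parabolic scaling immediately produces the desired bound (note $q_K \subseteq q(q_K^t)$). For $\lVert \nabla_x \delta \rVert_{L^2(K)}$, the triangle inequality together with a standard spatial inverse estimate on the polynomial $\mathcal{I}\eta|_K$ reduces matters to controlling $h_{K,x}^{-1}\lVert \mathcal{I}\eta - c\rVert_{L^2(K)}$ for a suitable constant $c$; I would take $c \coloneqq \langle \eta \rangle_{q_K}$ and use that $\mathcal{I}$ reproduces constants away from the lateral boundary (so $\mathcal{I}\eta - c = \mathcal{I}(\eta - c)$), then the local $L^2$-stability of $\mathcal{I}$ and Lemma~\ref{lem:GenParaPoincareH12} to reach the required bound after parabolic scaling. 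For patches touching the lateral boundary, the constant subtraction is replaced by Remark~\ref{rem:BoundaryPoincare} (using $\eta \in W_0$), which yields the same estimate. The fractional term $\lvert \delta \rvert_{H^{1/2}L^2(q_K^t)}$ is handled in the same spirit: the triangle inequality and the temporal inverse estimate of Lemma~\ref{lem:inverseEst} give $\lvert \mathcal{I}\eta \rvert_{H^{1/2}L^2(q_K^t)} \lesssim h_{K,t}^{-1/2}\lVert \mathcal{I}\eta \rVert_{L^2(q_K^t)}$, and decomposing $q_K^t$ into its constituent cells $K'$ and applying the $L^2$-stability of $\mathcal{I}$ on each draws in precisely the element patches $q_{K'} \subseteq q(q_K^t)$. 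Subtracting the mean of $\eta$ on $q(q_K^t)$ (or invoking Remark~\ref{rem:BoundaryPoincare}) and applying Lemma~\ref{lem:GenParaPoincareH12} then finishes the estimate.

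Summing the three bounds and taking the infimum over $w_h \in W_{h,0}^p$ yields the claim. The main technical obstacle I anticipate is the bookkeeping of patches: each application of the local $L^2$-stability of $\mathcal{I}$ on a cell $K' \subseteq q_K^t$ involves the element patch $q_{K'}$, and the union of these patches is precisely $q(q_K^t)$, which is why the extended patch appears on the right-hand side. Treating interior and boundary patches on a common footing also requires carefully switching between the constant-subtraction trick and Remark~\ref{rem:BoundaryPoincare} while ensuring that both strategies produce the same scaling, so that the final bound is uniform across $K \in \tria$.
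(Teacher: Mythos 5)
Your proposal reproduces the paper's strategy exactly: reduce to $\eta \coloneqq w - w_h$ via the projection property so $\delta = \eta - \mathcal{I}\eta$, bound the $\mathcal{I}\eta$ part by combining the relevant inverse estimate with the local $L^2$-stability of $\mathcal{I}$ after a constant subtraction (or Remark~\ref{rem:BoundaryPoincare} near the lateral boundary), and close with the generalized parabolic \Poincare{} inequality and the parabolic scaling. The paper carries this out explicitly only for the $H^{1/2}L^2(q_K^t)$ term (subtracting $\langle \eta\rangle_{q(q_K^t)}$ inside $\mathcal{I}$ before invoking Lemma~\ref{lem:inverseEst}, L$^2$-stability, and Lemma~\ref{lem:GenParaPoincareH12}) and remarks that the other two terms follow similarly; your sketch of those two terms, including the choice of constant and the patch bookkeeping that produces $q(q_K^t)$, is consistent with that.
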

\begin{proof}
Let $w \in W_0$ and $K\in \tria$. We focus on deriving an upper bound for the term $\lvert  \delta \rvert_{H^{1/2}L^2(q^t_K)}^2$. The remaining bounds follow similarly. The triangle inequality and the projection property of $\mathcal{I}$ yield for all $w_h \in W_{h,0}^p$
\begin{equation*}
\lvert \delta \rvert_{H^{1/2}L^2(q^t_K)} \leq  \lvert w- w_h \rvert_{H^{1/2}L^2(q^t_K)} + \lvert \mathcal{I}(w-w_h) \rvert_{H^{1/2}L^2(q^t_K)}.
\end{equation*}
The inverse estimate in Lemma~\ref{lem:inverseEst}, the local $L^2$-stability of $\mathcal{I}$ from Lemma~\ref{lem:basicpropsSZ} for functions in $W_0$, and the parabolic \Poincare{} inequality in Lemma~\ref{lem:GenParaPoincareH12} lead to
\begin{align*}
\lvert \mathcal{I}(w-w_h) \rvert_{H^{1/2}L^2(q^t_K)} & = \lvert \mathcal{I}(w-w_h - \langle w - w_h\rangle_{q(q_K^t)}) \rvert_{H^{1/2}L^2(q^t_K)}\\
& \lesssim h_{t,K}^{-1/2}\lVert  w - w_h - \langle w - w_h\rangle_{q(q_K^t)} \rVert_{L^2(q(q_K^t))} \\
& \lesssim \lVert \nabla_x (w-w_h) \rVert_{L^2(q(q_K^t))} + \lvert w - w_h \rvert_{H^{1/2}L^2(q(q_K^t))}.\qedhere
\end{align*}
\end{proof}
Combining Theorem~\ref{thm:LocalizationIIold} and Lemma~\ref{lem:LocalQuasiOpt} yields the following corollary.
\begin{corollary}[Local best-approximation]\label{cor:QuasBest}
One has for all $w\in W_0$
\begin{align*}
\lVert w-\mathcal{I} w \rVert_W^2
 \eqsim \sum_{K\in \tria} \min_{w_h\in W^p_{h,0}} \lVert  \nabla_x (w-w_h) \rVert_{L^2(q(q^t_K))}^2 + \lvert w-w_h \rvert_{H^{1/2}L^2(q(q^t_K))}^2.
\end{align*} 
\end{corollary}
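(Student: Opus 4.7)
The plan is to prove the two inequalities hidden in $\eqsim$ separately, with the ``$\lesssim$'' direction relying on both Theorem~\ref{thm:LocalizationIIold} and Lemma~\ref{lem:LocalQuasiOpt}, and the ``$\gtrsim$'' direction being a direct consequence of plugging the specific candidate $w_h = \mathcal{I}w$ into each local minimization problem.

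For the upper bound, set $\delta \coloneqq w - \mathcal{I}w$. Theorem~\ref{thm:LocalizationIIold} gives
\begin{equation*}
\lVert \delta \rVert_W^2 \lesssim \sum_{K\in \tria}\Big( \lVert \nabla_x \delta \rVert_{L^2(K)}^2 + \lvert \delta \rvert_{H^{1/2}L^2(q^t_K)}^2 + h_{K,t}^{-1}\,\lVert \delta \rVert_{L^2(K)}^2\Big),
\end{equation*}
and Lemma~\ref{lem:LocalQuasiOpt} bounds each summand by the local best-approximation error over the extended patch $q(q^t_K)$. Taking this $K$-by-$K$ bound yields the claimed ``$\lesssim$'' direction directly.

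For the lower bound, since the operator $\mathcal{I}$ preserves homogeneous lateral boundary data (Lemma~\ref{lem:basicpropsSZ}), the function $\mathcal{I}w$ is an admissible competitor in every local minimum on the right-hand side. Choosing $w_h = \mathcal{I}w$ in each term gives
\begin{equation*}
\sum_{K\in \tria} \min_{w_h\in W^p_{h,0}} \Big(\lVert \nabla_x (w-w_h) \rVert_{L^2(q(q^t_K))}^2 + \lvert w-w_h \rvert_{H^{1/2}L^2(q(q^t_K))}^2\Big)
\leq \sum_{K\in \tria} \Big(\lVert \nabla_x \delta \rVert_{L^2(q(q^t_K))}^2 + \lvert \delta \rvert_{H^{1/2}L^2(q(q^t_K))}^2\Big).
\end{equation*}
The extended patches $q(q^t_K)$ have uniformly bounded overlap by the shape regularity and the grading in Assumption~\ref{ass:Partition}, hence the sum of $\lVert \nabla_x \delta \rVert_{L^2(q(q^t_K))}^2$ is controlled by $\lVert \nabla_x \delta \rVert_{L^2(Q)}^2$, and the same finite-overlap argument applied to the indicators in the definition of $\lvert \bigcdot \rvert_{H^{1/2}L^2(\bigcdot)}$ bounds $\sum_K \lvert \delta \rvert_{H^{1/2}L^2(q(q^t_K))}^2$ by $\lvert \delta \rvert_{H^{1/2}(\cJ;L^2(\Omega))}^2$. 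Combining these two estimates with the definition \eqref{eq:defNormW} of $\lVert \bigcdot \rVert_W$ yields the ``$\gtrsim$'' direction.

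The only mildly delicate point is the finite-overlap control for the fractional semi-norm in the lower bound, where one must verify that for each $(t,s,x)$ the number of cells $K$ whose extended patch $q(q^t_K)$ contains both $(x,t)$ and $(x,s)$ is uniformly bounded; this follows from the level constraint \eqref{eq:LevelDist} together with the cylindrical structure and the assumption in Assumption~\ref{ass:Partition}\ref{itm:TimeSpaceCylNeighbors}. Everything else is a bookkeeping combination of the two cited results.
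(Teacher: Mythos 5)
Your proof is correct and follows essentially the same route as the paper, which presents the corollary as the direct combination of Theorem~\ref{thm:LocalizationIIold} (localization, your upper bound) with the termwise application of Lemma~\ref{lem:LocalQuasiOpt}. Your lower bound, obtained by inserting the admissible competitor $w_h=\mathcal{I}w\in W^p_{h,0}$ and invoking the finite overlap of the extended patches $q(q^t_K)$, is precisely the implicit argument the paper relies on for the reverse inequality.
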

By taking $w_h=0$ in the corollary and using the finite overlap of patches, we obtain the following stability result.
\begin{corollary}[Stability]\label{cor:Stability}
The operator $\mathcal{I}\colon W_0 \to W_{h,0}^p$ is uniformly stable with respect to the norm in $W$ in the sense that 
\begin{align*}
\|\mathcal{I} u\|_W^2+ \sum_{K \in \tria} h_{K,x}^{-2} \, \|(1-\mathcal{I})u\|^2_{L^2(K)} \lesssim \|u\|_W^2.
\end{align*}
\end{corollary}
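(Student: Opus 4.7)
The plan is to derive the bound from Corollary~\ref{cor:QuasBest} with the cheap choice $w_h=0$ plus the already-established localization of the interpolation error, and then harvest everything using the finite overlap of the extended patches and the parabolic scaling in Assumption~\ref{ass:Partition}.

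\textbf{Step 1 (apply Corollary~\ref{cor:QuasBest} with $w_h=0$).}
First I would specialize Corollary~\ref{cor:QuasBest} to $w=u$ and, in each local minimum, take $w_h=0\in W_{h,0}^p$. This immediately yields
\begin{equation*}
\|u-\mathcal{I}u\|_W^2 \lesssim \sum_{K\in\tria} \bigl(\|\nabla_x u\|_{L^2(q(q_K^t))}^2 + |u|_{H^{1/2}L^2(q(q_K^t))}^2\bigr).
\end{equation*}

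\textbf{Step 2 (finite overlap).}
Next I would invoke that the extended patches $\{q(q_K^t)\}_{K\in\tria}$ cover $Q$ with uniformly bounded overlap. This is a purely combinatorial statement: by Assumption~\ref{ass:Partition}\ref{itm:conformity}--\ref{itm:TimeSpaceCylNeighbors} together with the level condition \eqref{eq:LevelDist} and the size-equivalence \eqref{eq:EqualSizeNeighbors}, each point of $Q$ belongs to only a uniformly bounded number of such patches, and the individual patches are unions of a bounded number of elements. Hence the $L^2(\cJ;H^1(\Omega))$ and $H^{1/2}L^2$ seminorms sum up to their global counterparts, giving
\begin{equation*}
\|u-\mathcal{I}u\|_W^2 \lesssim \|\nabla_x u\|_{L^2(Q)}^2 + |u|_{H^{1/2}(\cJ;L^2(\Omega))}^2 \eqsim \|u\|_W^2.
\end{equation*}
A triangle inequality then produces $\|\mathcal{I}u\|_W \lesssim \|u\|_W$.

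\textbf{Step 3 (the $L^2$ term with parabolic scaling).}
For the remaining sum I would combine the localization from Theorem~\ref{thm:LocalizationIIold} with the parabolic scaling $h_{K,t}\eqsim h_{K,x}^2$ from \eqref{eq:ParabolicScaling}. The theorem controls $\sum_K h_{K,t}^{-1}\|(1-\mathcal{I})u\|_{L^2(K)}^2$ by $\|u-\mathcal{I}u\|_W^2$, and the scaling replaces $h_{K,t}^{-1}$ by $h_{K,x}^{-2}$ up to uniform constants. Together with Step~2 this gives
\begin{equation*}
\sum_{K\in\tria} h_{K,x}^{-2}\,\|(1-\mathcal{I})u\|_{L^2(K)}^2 \lesssim \|u-\mathcal{I}u\|_W^2 \lesssim \|u\|_W^2,
\end{equation*}
which together with $\|\mathcal{I}u\|_W \lesssim \|u\|_W$ yields the claim.

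\textbf{Expected main obstacle.}
There is essentially no analytic work left here — all the heavy lifting was done in Theorem~\ref{thm:LocalizationIIold}, Lemma~\ref{lem:LocalQuasiOpt} and Corollary~\ref{cor:QuasBest}. The only point requiring genuine care is verifying the finite-overlap property of the extended patches $q(q_K^t)$, since they couple a patch in time (through $q_K^t$) with a patch in space (through $q(\bigcdot)$); this is where Assumption~\ref{ass:Partition}\ref{itm:TimeSpaceCylNeighbors} and the one-level difference \eqref{eq:LevelDist} are crucial to keep the multiplicity uniformly bounded.
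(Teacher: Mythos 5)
Your proposal is correct and follows essentially the same route as the paper, which derives the corollary by taking $w_h=0$ in Corollary~\ref{cor:QuasBest} and using the finite overlap of patches. Your explicit handling of the weighted $L^2$ term via Theorem~\ref{thm:LocalizationIIold} together with the parabolic scaling $h_{K,t}\eqsim h_{K,x}^2$ simply spells out what the paper leaves implicit, so there is nothing to add.
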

We conclude this subsection with a modification of $\mathcal{I}$ mapping into discretizations of $U_F$ or $V_F$ defined in \eqref{eq:DefUvV}.
\begin{remark}[Zero initial or end data]\label{rem:IforUandV}
Let $V_h \coloneqq  W^p_{h,0} \cap V_F$. Then we extend the partition $\tria$ to a partition of the half space $(0,\infty) \times \Omega$ by mirroring the partition at times $T, 2T,\dots$ . We now define the operator $\mathcal{I}$ on these partitions as in Definition~\ref{def:SZ} of the half space such that degrees of freedom $\psi_\ell$ with Lagrange node at time $T$ use dual basis functions $\psi_\ell^*$ supported outside of $Q$. The resulting operator $\mathcal{I}$ maps $V_F$ onto $V_h$ and has the localization property displayed in Theorem~\ref{thm:LocalizationIIold}, where the left-hand side involves the $H^{1/2}((0,\infty);H)$ norm that is equivalent to the $H^{1/2}_{,0}(\cJ;H)$ norm as discussed in Remark~\ref{rem:ExtByZero}.
This allows us to replace the norm $\lVert \bigcdot \rVert_W$ in Lemma~\ref{lem:LocalQuasiOpt} as well as Corollary~\ref{cor:QuasBest} and \ref{cor:Stability} by the norm $\lVert \bigcdot \rVert_V$.
Similar modifications lead to an interpolation operator $\mathcal{I} \colon U_F \to W^p_{h,0}\cap U_F$.
\end{remark}
\subsection{Fortin operator}\label{subsec:Fortin}
In this subsection we extend the design of the interpolation operator in Section~\ref{subsec:FEspaces} to ensure the existence of a bounded operator $F\colon V \to V_h \subset V$ that satisfies, with trial space $U_h \coloneqq W_{h,0}^p$ and polynomial degree $p = (p_t,p_x)\in \mathbb{N}^2$, the annihilation property
\begin{equation*}
b(u_h,v-F v) = 0\qquad\text{for all }u_h \in U_h\text{ and }v\in V.
\end{equation*}
It is known that the existence of such a so-called Fortin operator yields the injectivity of the discretized operator equation $B\colon U_h \to V_h'$ \cite{Fortin77} and leads to quasi-optimality in suitable minimal residual methods \cite[Thm.~3.6]{MonsuurStevensonStorn23}. 

\begin{theorem}[Fortin operator]\label{thm:FortinOperator}
There exists a continuous linear operator $F\colon V \to V_h$ with the annihilation property
\begin{equation*}
b(u_h,v - F v) = 0\qquad\text{for all }u_h\in U_h\text{ and }v\in V.
\end{equation*}
\end{theorem}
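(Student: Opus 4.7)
My plan is to implement the classical Fortin two-step construction by setting
\begin{equation*}
Fv \coloneqq \mathcal{I}v + C(v - \mathcal{I}v),
\end{equation*}
where $\mathcal{I}\colon V \to V_{h,0}$ is the Scott--Zhang-type interpolant from Remark~\ref{rem:IforUandV}, uniformly stable in the $V$-norm by Corollary~\ref{cor:Stability}, and $C\colon L^2(Q) \to V_{h,1}\oplus V_{h,2}$ is a local bubble correction described next. The guiding observation is that, for any $u_h \in U_h = W_{h,0}^{p_t,p_x}$ and any $\phi \in V$, element-wise integration by parts in space gives
\begin{equation*}
b(u_h,\phi) = \sum_{K\in \tria} \int_K (\partial_t u_h - \Delta_x u_h)\,\phi \dtx + \sum_{f\in \faces_{t,0}} \int_f [\partial_n u_h]\, \phi \ds,
\end{equation*}
where on each $K = K_t \times K_x$ the residual $\partial_t u_h - \Delta_x u_h$ lies in the polynomial space $\mathbb{P}_{p_t-1}(K_t)\otimes \mathbb{P}_{p_x}(K_x) + \mathbb{P}_{p_t}(K_t)\otimes \mathbb{P}_{p_x-2}(K_x)$ and on each spatial face $f = K_t\times f_x \in \faces_{t,0}$ the normal-derivative jump lies in $\mathbb{P}_{p_t}(K_t) \otimes \mathbb{P}_{p_x-1}(f_x)$.

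\textbf{The correction $C$.} I define $C$ locally by matching $w$ against these polynomial test spaces. For each $K\in \tria$ and each $f\in \faces_{t,0}$, $Cw$ is uniquely determined by the face-then-volume moment system
\begin{align*}
\int_f (w - Cw)\, q \ds &= 0 && \text{for all } q \in \mathbb{P}_{p_t}(K_t) \otimes \mathbb{P}_{p_x-1}(f_x),\\
\int_K (w - Cw)\, q \dtx &= 0 && \text{for all } q \in \mathbb{P}_{p_t-1}(K_t)\otimes \mathbb{P}_{p_x}(K_x) + \mathbb{P}_{p_t}(K_t)\otimes \mathbb{P}_{p_x-2}(K_x),
\end{align*}
unisolvency of which follows from a standard reference-configuration Gram-matrix argument: the face system is solved first (volume bubbles vanish on faces, so they do not interfere), and then the volume system is solved for $w-C_\mathrm{face}w$ on each element. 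The bubble spaces \eqref{eq:volumebubble} and \eqref{eq:facebubble} were chosen to make the dimensions match. Since $v - Fv = (I-C)(v - \mathcal{I}v)$ and $\partial_t u_h - \Delta_x u_h$ respectively $[\partial_n u_h]$ lie in the moment test spaces, the annihilation identity $b(u_h, v - Fv) = 0$ is immediate. Moreover, $Fv \in V$ because the supports of the volume and face bubbles do not meet $\{T\}\times\Omega$ or $\cJ\times\partial\Omega$, while $\mathcal{I}$ already preserves the required traces.

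\textbf{Continuity.} The stability of $\mathcal{I}$ in $\|\cdot\|_V$, together with the $h_{K,x}^{-2}$-weighted $L^2$-control of the interpolation error from Corollary~\ref{cor:Stability} (adapted to $V$ via Remark~\ref{rem:IforUandV}), reduces the proof of $\|Fv\|_V \lesssim \|v\|_V$ to the bubble bound
\begin{equation*}
\|\nabla_x Cw\|_{L^2(K)}^2 + h_{K,t}^{-1}\,\|Cw\|_{L^2(K)}^2 \lesssim h_{K,x}^{-2}\,\|w\|_{L^2(q_K)}^2,
\end{equation*}
valid for any $w \in L^2(Q)$ and proved by a scaling argument on the reference bubble together with the parabolic scaling $h_{K,t}\eqsim h_{K,x}^2$. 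The fractional-in-time seminorm of $Cw$ is then controlled by the local $L^2$-contributions via the localization of Lemma~\ref{lem:localizedGeneral} and the inverse estimate of Lemma~\ref{lem:inverseEst}. Specializing $w = v-\mathcal{I}v$ and summing closes the continuity estimate.

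\textbf{Main obstacle.} The delicate step will be controlling $C(v-\mathcal{I}v)$ in the fractional-in-time part of the $V$-norm. The bubble correction behaves like an $L^2$-projector, but its $H^{1/2}_{,0}(\cJ;L^2(\Omega))$-seminorm must be absorbed by the $h_{K,x}^{-2}$-weighted $L^2$-error of $\mathcal{I}$. The parabolic scaling $h_{K,t} \eqsim h_{K,x}^2$ from Assumption~\ref{ass:Partition} is the structural reason this absorption balances: the inverse estimate in time contributes exactly the factor $h_{K,t}^{-1} \eqsim h_{K,x}^{-2}$ that cancels the weight in the approximation bound. Under an isotropic scaling this balance would fail, which is precisely why the method is tied to the cylindrical, parabolically scaled meshes of Assumption~\ref{ass:Partition}.
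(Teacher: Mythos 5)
Your construction coincides with the paper's: the same Scott--Zhang operator $\mathcal{I}$ adapted as in Remark~\ref{rem:IforUandV}, the same face-then-volume bubble moment corrections (the paper's $\mathcal{C}_1$ and $\mathcal{C}_2$, so that $v-Fv=(1-\mathcal{C}_2)(1-\mathcal{C}_1)(v-\mathcal{I}v)$), the same element-wise integration by parts for the annihilation property, and the same route to continuity: stability of $\mathcal{I}$ in the form of Corollary~\ref{cor:Stability}, localization of the fractional-in-time seminorm via Lemma~\ref{lem:localizedGeneral} and Lemma~\ref{lem:inverseEst}, with the parabolic scaling of Assumption~\ref{ass:Partition} providing the balance you describe. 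Unisolvency of the moment systems and the verification that $Fv\in V_h\subset V$ are argued as in the paper.

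There is, however, one intermediate claim that is false as stated and needs repair. You assert, ``for any $w\in L^2(Q)$,'' the bound $\|\nabla_x Cw\|_{L^2(K)}^2 + h_{K,t}^{-1}\|Cw\|_{L^2(K)}^2 \lesssim h_{K,x}^{-2}\|w\|_{L^2(q_K)}^2$. First, the face moments defining the face-bubble part of $C$ are not even defined for general $w\in L^2(Q)$; second, even for $w\in L^2(K_t;H^1(K_x))$ a purely $L^2$-based right-hand side cannot hold, since by scaling $\|C_{\mathrm{face}}w\|_{L^2(K)}\lesssim h_{K,x}^{1/2}\|w\|_{L^2(f)}$ and one can concentrate $w$ near the face $f$ so that $\|w\|_{L^2(f)}$ is arbitrarily large while $\|w\|_{L^2(q_K)}$ stays bounded. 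The trace inequality forces an additional spatial-gradient contribution, i.e.\ the correct local estimate is
\begin{equation*}
\|\nabla_x C w\|_{L^2(K)}^2 + h_{K,t}^{-1}\,\|C w\|_{L^2(K)}^2 \;\lesssim\; h_{K,x}^{-2}\,\|w\|_{L^2(q_K)}^2 + \|\nabla_x w\|_{L^2(q_K)}^2 ,
\end{equation*}
which is exactly how the paper organizes its estimate \eqref{eq:temp} for $\mathcal{C}_1$ (only the volume-bubble part admits a pure $L^2$ bound). This extra term is harmless in the application, because for $w=v-\mathcal{I}v$ one has $\|\nabla_x(v-\mathcal{I}v)\|_{L^2(Q)}\lesssim\|v\|_V$ by Corollary~\ref{cor:Stability}, so after summation over $K$ the continuity $\|Fv\|_V\lesssim\|v\|_V$ still follows. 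With the bubble bound corrected in this way, your proof is complete and follows the paper's argument.
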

\begin{proof}
\textit{Step 1 (Definition of the operator).}
From \eqref{eq:volumebubble} and  \eqref{eq:facebubble}, recall the definition of the bubble spaces $V_{h,1}$ and $V_{h,2}$.
We define the correction operator $\mathcal{C}_1 \colon V \to V_{h,1}$, which is according to the definition of $V_{h,1}$ uniquely characterized by its values on the faces $f\in \mathcal{F}_{t,0}$, such that
\begin{equation}\label{eq:DefC1}
\int_f \xi (v- \mathcal{C}_1 v)\ds = 0 \quad\text{for all }v\in V, f\in \mathcal{F}_{t,0}, \xi \in \mathbb{P}_{p_t}(f_t)\otimes \mathbb{P}_{p_x-1}(f_x).
\end{equation}
Moreover, we set the correction operator $\mathcal{C}_2 \colon V \to V_{h,2}$ such that for all $v\in V$, $K\in \tria$, and $\theta \in \mathbb{P}_{p_t}(K_t) \otimes \mathbb{P}_{p_x-2}(K_x) + \mathbb{P}_{p_t-1}(K_t) \otimes \mathbb{P}_{p_x}(K_x)$
\begin{equation}\label{eq:DefC2}
\int_K \theta (v-\mathcal{C}_2 v) \dx = 0.
\end{equation}
Let $\mathcal{I} \colon V \to V_{h,0}$ be the interpolation operator discussed in Section~\ref{subsec:FEspaces} with the modification in Remark~\ref{rem:IforUandV}.
We define for all $v\in V$ the composed mapping
\begin{align*}
F v \coloneqq \mathcal{I} v - \mathcal{C}_1 \delta - \mathcal{C}_2 ( \delta - \mathcal{C}_1 \delta )\qquad\text{with }\delta \coloneqq v - \mathcal{I} v.
\end{align*}

\textit{Step 2 (Annihilation property of $F$).}
Let $u_h \in U_h$ and $v \in V$.
We set $\delta \coloneqq v - \mathcal{I} v$. An element-wise integration by parts reveals
\begin{align*}
&b(u_h,v-Fv) = b(u_h,\delta -\mathcal{C}_1 \delta - \mathcal{C}_2 ( \delta - \mathcal{C}_1 \delta ))\\
&\quad = \sum_{K\in\tria} \langle \partial_t u_h-\Delta_x u_h ,(1- \mathcal{C}_2)(\delta - \mathcal{C}_1 \delta)\rangle_K + \int_{K_t} \int_{\partial K_x} \nabla_x u_h \cdot \nu_x\,  (1 - \mathcal{C}_1 ) \delta\dx \ds.
\end{align*}
The volume contributions equal zero due to \eqref{eq:DefC2} and the integral over the faces equals zero due to \eqref{eq:DefC1}.

\textit{Step 3 (Boundedness of $F$).}
Let $\widehat{F}:= \mathcal{I} + \mathcal{C}_1(1-\mathcal{I})$, so that $F:= \widehat{F} + \mathcal{C}_2(1-\widehat{F})$.
We show that for $G \in \{\mathcal{I}, \widehat{F}, F\}$, 
\be \label{eq:G}
\|G v\|_V^2+ \sum_{K \in \tria} h_{K,x}^{-2}\, \|(1-G)v\|^2_{L^2(K)} \lesssim \|v\|_V^2.
\ee
This yields in particular $\|F u\|_V \lesssim \|u\|_V$.

For $G=\mathcal{I}$, Corollary~\ref{cor:Stability} and Remark~\ref{rem:ExtByZero} yield the claim in \eqref{eq:G}.
The parabolic scaling assumption in \eqref{eq:ParabolicScaling}, the inverse inequalities in Lemma~\ref{lem:localizedGeneral} and \ref{lem:inverseEst}, and the trace inequality $\|v\|^2_{L^2(f)} \lesssim h_{K,x}^{-1}\|v\|_{L^2(K)}^2+h_{K,x}\|v\|_{L^2(K_t;H^1(K_x))}^2$ for $f \in \mathcal{F}_t$ with $f \subset K \in \tria$ yield
\begin{equation} \label{eq:temp}
\begin{split}
\|  \mathcal{C}_1 v\|_V^2 \lesssim \sum_{K \in \tria} h_{K,x}^{-2}\, \|\mathcal{C}_1 v\|^2_{L^2(K)} &\lesssim \|v\|^2_{L^2(\mathcal{J};H^1_0(\Omega))}+\sum_{K \in \tria} h_{K,x}^{-2}\, \| v\|^2_{L^2(K)} \\
& \lesssim \|v\|^2_{V}+\sum_{K \in \tria} h_{K,x}^{-2}\, \| v\|^2_{L^2(K)}.
\end{split}
\end{equation}
Hence, the definition of $\widehat{F}$ and the already stated result for $\mathcal{I}$ verify \eqref{eq:G} for $G=\widehat{F}$. 
Similarly, we conclude \eqref{eq:G} for $G=F$ from
\begin{equation*}
\|  \mathcal{C}_2 u\|_V^2 \lesssim \sum_{K \in \tria} h_{K,x}^{-2} \|\mathcal{C}_2 u\|^2_{L^2(K)} \lesssim \sum_{K \in \tria} h_{K,x}^{-2} \| u\|^2_{L^2(K)}.
\end{equation*}
This shows that the linear operator $F\colon V \to V_h$ is continuous.
\end{proof}
The existence of the Fortin operator ensures the quasi-optimality of the numerical scheme discussed in the following section. The price to be paid is the parabolic scaling, which is more natural for the parabolic problem but leads to  more degrees of freedom than needed for smooth problems. We counter this effect by using different polynomial degrees in space and time as discussed in Remark~\ref{rem:1}.
\section{Preconditioner}\label{sec:preCond}
As explained in Section~\ref{subsec:discreteSetting}, we have to replace the inner product $\langle \bigcdot , \bigcdot \rangle_V$ in \eqref{eq:SaddlePointProb} by a suitable inner product $(G_h^{-1}\bigcdot)(\bigcdot)$ on $V_h\times V_h$ with induced norm being uniformly equivalent to the norm $\lVert \bigcdot \rVert_V$. The operator $G_h\in \cL(V_h', V_h)$ is a preconditioner that we will introduce in the following two subsections. A key tool in our design and analysis of $G_h$ is the following classical result, cf.~\cite[Sec.~4.1]{Oswald94}.
\begin{theorem}[Additive subspace correction] \label{thm:1}
Let $\U$ be a Hilbert space, and let $(\U_i)_i$ be a family of Hilbert spaces with embeddings $ E_i\colon \U_i \rightarrow \U$ such that
\begin{equation*}
\bigcup_i E_i(\U_i)=\U.
\end{equation*}
We set
\begin{equation*}
\lnrm u\rnrm_\U^2 \coloneqq \inf_{\{(u_i)_{i} \in \prod_{i} \U_i\colon \sum_{i} E_i u_i=u\}} \textstyle{\sum_{i}} \|u_i\|^2_{\U_i}\qquad\text{for all }u\in \U.
\end{equation*}
Let $R_i\colon \U_i' \rightarrow \U_i$ be the Riesz isomorphism. Then
$G \coloneqq \sum_{i} E_i R_i E_i'\in \Lis(\U',\U)$ is a linear isomorphism with
\begin{equation*}
\inf_{0 \neq u \in \U} \frac{\nrm u\nrm_\U}{\|u\|_\U} \leq
\frac{(G^{-1}\bigcdot)(\bigcdot)^{1/2}}{\|\bigcdot\|_{\U}} \leq
\sup_{0 \neq u \in \U} \frac{\nrm u\nrm_\U}{\|u\|_\U} \qquad\text{on } \U\setminus \{0\}.
\end{equation*}
\end{theorem}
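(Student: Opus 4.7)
My approach is to realize the operator $G$ as the image of the natural Riesz isomorphism on a product Hilbert space. Let $X \coloneqq \prod_i \U_i$ carry the Hilbertian product norm $\|(u_i)\|_X^2 = \sum_i \|u_i\|_{\U_i}^2$, and define the continuous linear map $E\colon X \to \U$, $(u_i) \mapsto \sum_i E_i u_i$. The hypothesis $\bigcup_i E_i(\U_i)=\U$ makes $E$ surjective, and by construction the infimum defining $\lnrm u\rnrm_\U$ is the quotient norm $\inf\{\|x\|_X \colon Ex=u\}$. The Riesz isomorphism on $X$ is $R \coloneqq \prod_i R_i\colon X' \to X$ and the dual $E'\colon \U' \to X'$ satisfies $(E'g)_i = E_i' g$, so that $G = E R E' = \sum_i E_i R_i E_i'$ is the composition that induces the quotient norm on $\U$ via the Hilbert structure on $X$.

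The crux is a sharp Cauchy--Schwarz identity. For any $g \in \U'$ and any decomposition $u = \sum_i E_i u_i$, the adjoint property of $E_i'$ together with Cauchy--Schwarz on $X$ yields
\begin{equation*}
|\langle g,u\rangle|^2 = \Bigl|\textstyle\sum_i (E_i'g)(u_i)\Bigr|^2 \leq \Bigl(\textstyle\sum_i \|E_i'g\|_{\U_i'}^2\Bigr) \Bigl(\textstyle\sum_i \|u_i\|_{\U_i}^2\Bigr).
\end{equation*}
Now $\|E_i'g\|_{\U_i'}^2 = (E_i'g)(R_i E_i'g) = \langle g, E_i R_i E_i' g\rangle$, so summation gives $\sum_i \|E_i'g\|_{\U_i'}^2 = \langle g, Gg\rangle$. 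Taking the infimum over all admissible decompositions of a fixed $u$ yields the master inequality
\begin{equation*}
|\langle g,u\rangle|^2 \leq \langle g, Gg\rangle \cdot \lnrm u\rnrm_\U^2 \qquad \text{for all } g\in \U',\ u\in \U.
\end{equation*}
Evaluating at $u \coloneqq Gg$ with the canonical decomposition $u_i \coloneqq R_i E_i' g$ (which clearly sums to $Gg$ and has $\sum_i\|u_i\|^2 = \langle g, Gg\rangle$) gives simultaneously $\lnrm Gg\rnrm_\U^2 \leq \langle g, Gg\rangle$ and $\langle g, Gg\rangle^2 \leq \langle g, Gg\rangle \cdot \lnrm Gg\rnrm_\U^2$. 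This pins down the key identity
\begin{equation*}
\lnrm Gg\rnrm_\U^2 = \langle g, Gg\rangle \qquad \text{for all } g \in \U'.
\end{equation*}

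With this identity in hand the theorem drops out almost for free. Writing $c \coloneqq \inf_{u\neq 0} \lnrm u\rnrm_\U/\|u\|_\U$ and $C \coloneqq \sup_{u\neq 0} \lnrm u\rnrm_\U/\|u\|_\U$ (assumed positive and finite), the master inequality and its attainment at $u=Gg$ give $\sup_{u\neq 0}|\langle g,u\rangle|^2/\lnrm u\rnrm_\U^2 = \langle g, Gg\rangle$, whence $C^{-2}\|g\|_{\U'}^2 \leq \langle g,Gg\rangle \leq c^{-2}\|g\|_{\U'}^2$. Thus $g\mapsto \langle g,Gg\rangle$ is a bounded, coercive, symmetric positive form on $\U'$, so $G\in \Lis(\U',\U)$. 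Finally, every $u\in \U$ is of the form $u=Gg$ with $g=G^{-1}u$, and the identity rewrites as $(G^{-1}u)(u) = \lnrm u\rnrm_\U^2$, which gives the claimed sandwich between the extrema of $\lnrm u\rnrm_\U/\|u\|_\U$.

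The main obstacle is the sharpness step: recognizing that the canonical decomposition $u_i = R_i E_i' g$ saturates the Cauchy--Schwarz inequality simultaneously as a decomposition of $Gg$ \emph{and} as the duality witness of $\langle g, Gg\rangle$. The factorization $G=ERE'$ and the ensuing equivalence with the quotient norm are otherwise a routine Hilbert-space computation, essentially an instance of the first isomorphism theorem for the closed-range map $E$.
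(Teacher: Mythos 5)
Your proof is correct. Note that the paper itself does not prove Theorem~\ref{thm:1}; it quotes it as a classical result with a pointer to Oswald's book, and your argument — factoring $G=ERE'$ through the Hilbert (direct) sum $X$ of the $\U_i$, the Cauchy--Schwarz ``master inequality'', and its saturation by the canonical decomposition $u_i=R_iE_i'g$ — is precisely the standard proof behind that citation, and it in fact yields the sharper identity $(G^{-1}u)(u)=\nrm u\nrm_\U^2$ from which the stated sandwich is immediate. Two small points worth tightening: for an infinite index family, $X$ must be the $\ell^2$-direct sum (square-summable families), and the boundedness of $E$ together with $0<\inf_u \nrm u\nrm_\U/\|u\|_\U\leq \sup_u \nrm u\nrm_\U/\|u\|_\U<\infty$ is an implicit hypothesis of the theorem (automatic in the paper's finite-dimensional applications), which you correctly flag as ``assumed positive and finite''; also, the division by $g(Gg)$ in the saturation step needs the trivial remark that $g(Gg)=0$ forces $E_i'g=0$ for all $i$, hence $g=0$ because the ranges $E_i(\U_i)$ span $\U$, so the identity holds there as well.
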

%
\subsection{Splitting and bubbles}\label{subsec:Splitting}
Due to the addition of edge bubbles for the construction of our Fortin interpolator, the test spaces $V_h \coloneqq V_{h,0} \oplus V_{h,1} \oplus V_{h,2}$ defined in \eqref{eq:testSpace} are not nested under mesh refinement. Therefore, we decompose the test space into the space without bubbles $V_{h,0}$ and the bubble part $\mathcal{B}_h\coloneqq  V_{h,1} \oplus V_{h,2}$. This splitting will be stable with respect to the $\|\bigcdot\|_V$-norm, allowing us to precondition both parts separately. 
As we will see, on the bubble-part a diagonal preconditioner will suffice. 
The following proposition summarizes these ideas.

\begin{proposition}[Splitting]\label{prop:splitting} Let $V_h=V_{h,0} \oplus \mathcal{B}_h$ such that
\be \label{eq:7}
\|v+b\|_V^2 \eqsim \|v\|_V^2+ \|b\|_V^2 \qquad\text{for all }v \in V_{h,0},\,b \in \mathcal{B}_h.
\ee
Let $G_{h,0}\colon V_{h,0} ' \rightarrow V_{h,0}$ be such that
\be \label{eq:8}
\|\bigcdot\|_V^2 \eqsim (G_{h,0}^{-1}\,\bigcdot)(\bigcdot) \quad \text{on } V_{h,0}.
\ee
Let $\Psi_h$ be a basis for $\mathcal{B}_h$ such that one has for all scalars $c_\psi$ the equivalence
\be \label{eq:9}
\Big\|\sum_{\psi \in \Psi_h} c_\psi \psi\Big\|_V^2 \eqsim \sum_{\psi \in \Psi_h} c_\psi^2.
\ee
Then with $G_{h,1}\colon \mathcal{B}_h'\rightarrow \mathcal{B}_h$ defined by $G_{h,1}f :=\sum_{\psi \in \Psi_h} f(\psi)\psi$, and 
with the inclusions $E_{h,0}\colon V_{h,0} \rightarrow V_h$ and $E_{h,1}\colon \mathcal{B}_h \rightarrow V_h$ the operator
\begin{equation}\label{eq:SplittedPrecond}
G_h\coloneqq E_{h,0} G_{h,0} E_{h,0}'+ E_{h,1} G_{h,1} E_{h,1}'\colon V_h' \rightarrow V_h
\end{equation}
satisfies 
\begin{equation*}
\|\bigcdot\|_V^2 \eqsim (G_h^{-1}\bigcdot)(\bigcdot) \quad \text{on } V_h.
\end{equation*}
\end{proposition}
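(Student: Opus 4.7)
The plan is to recognize the proposition as a direct application of Theorem~\ref{thm:1} with exactly two subspaces, so the whole task reduces to identifying the ingredients and checking one norm equivalence.

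First I would set $\U = V_h$ equipped with $\|\bigcdot\|_V$, $\U_0 = V_{h,0}$ endowed with the inner product $(G_{h,0}^{-1}\bigcdot)(\bigcdot)$, and $\U_1 = \mathcal{B}_h$ endowed with the inner product $\langle \sum_\psi c_\psi \psi,\sum_\psi d_\psi \psi\rangle_{\U_1} \coloneqq \sum_{\psi \in \Psi_h} c_\psi d_\psi$, for which $\Psi_h$ is by construction an orthonormal basis. The embeddings are the inclusions $E_{h,0}$ and $E_{h,1}$, and the identity $E_{h,0}(V_{h,0}) \cup E_{h,1}(\mathcal{B}_h) = V_h$ is clear from the direct sum decomposition. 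Next I would verify that the Riesz isomorphism $R_0\colon V_{h,0}' \to V_{h,0}$ for $\U_0$ coincides with $G_{h,0}$: by definition, $R_0 f$ is characterized by $(G_{h,0}^{-1} R_0 f)(w) = f(w)$ for all $w\in V_{h,0}$, whence $R_0 = G_{h,0}$. Similarly, the Riesz isomorphism $R_1\colon \mathcal{B}_h' \to \mathcal{B}_h$ satisfies $\sum_\psi (R_1 f)_\psi\, d_\psi = f\bigl(\sum_\psi d_\psi \psi\bigr)$ for all coefficient tuples $(d_\psi)$, which forces $(R_1 f)_\psi = f(\psi)$ and hence $R_1 = G_{h,1}$. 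Consequently the operator produced by Theorem~\ref{thm:1} is precisely the $G_h$ in~\eqref{eq:SplittedPrecond}.

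It then remains to show that the interpolated norm $\lnrm \bigcdot \rnrm_\U$ from Theorem~\ref{thm:1} is equivalent to $\|\bigcdot\|_V$ on $V_h$. Because $V_h = V_{h,0} \oplus \mathcal{B}_h$ is a direct sum, every $u \in V_h$ admits a \emph{unique} decomposition $u = v + b$ with $v \in V_{h,0}$ and $b \in \mathcal{B}_h$, so the infimum in the definition of $\lnrm u \rnrm_\U^2$ is realized at this single splitting. Using~\eqref{eq:8} for the $V_{h,0}$ part and~\eqref{eq:9} for the bubble part, followed by the stable splitting~\eqref{eq:7}, I obtain
\begin{equation*}
\lnrm u \rnrm_\U^2 = \|v\|_{\U_0}^2 + \|b\|_{\U_1}^2 \eqsim \|v\|_V^2 + \|b\|_V^2 \eqsim \|v+b\|_V^2 = \|u\|_V^2.
\end{equation*}
Inserting this into the conclusion of Theorem~\ref{thm:1} gives $(G_h^{-1}\bigcdot)(\bigcdot) \eqsim \|\bigcdot\|_V^2$ on $V_h$, as required.

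There is no real obstacle here beyond assembling the pieces: the direct-sum hypothesis makes the infimum trivial, so the three assumptions~\eqref{eq:7}--\eqref{eq:9} combine cleanly. If I had to flag a point that deserves care, it is the identification $R_1 = G_{h,1}$, because it relies on $\Psi_h$ being orthonormal for the inner product one \emph{chooses} on $\mathcal{B}_h$, rather than for $\langle\bigcdot,\bigcdot\rangle_V$; this is why it is legitimate to use a plain diagonal (mass-lumped) preconditioner on the bubble part while still recovering the correct $V$-norm equivalence globally.
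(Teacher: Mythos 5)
Your proposal is correct and follows essentially the same route as the paper: both apply Theorem~\ref{thm:1} with the two subspaces $V_{h,0}$ and $\mathcal{B}_h$ equipped with the scalar products $(G_{h,0}^{-1}\bigcdot)(\bigcdot)$ and the coefficient (diagonal) inner product, identify the corresponding Riesz maps as $G_{h,0}$ and $G_{h,1}$, and deduce the equivalence of the triple-bar norm with $\|\bigcdot\|_V$ from \eqref{eq:7}--\eqref{eq:9}. Your explicit remarks that the direct-sum structure makes the infimum trivial and that $\Psi_h$ is orthonormal only for the chosen inner product on $\mathcal{B}_h$ are fine elaborations of details the paper leaves implicit.
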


\begin{proof} For $V_{h,0}$ equipped with scalar product $(G_{h,0}^{-1} \,\bigcdot)(\bigcdot)$, the Riesz isomorphism $V_{h,0}'\rightarrow V_{h,0}$ is $G_{h,0}$.
For $\mathcal{B}_h$ equipped with scalar product 
\begin{equation*}
\Big(\sum_{\psi \in \Psi} c_\psi \psi,\sum_{\phi \in \Psi} d_\phi \phi\Big) \mapsto 
\sum_{\psi \in \Psi} c_\psi d_\phi,
\end{equation*}
the Riesz isomorphism $\mathcal{B}_h'\rightarrow \mathcal{B}_h$ is $G_{h,1}$. Now the result follows from Theorem~\ref{thm:1}, where the (uniform) equivalence of the triple-bar norm with $\|\bigcdot\|_V$ on $V_h$ follows from the assumptions in \eqref{eq:7}--\eqref{eq:9}.
\end{proof}
The following lemma verifies the assumptions of Proposition~\ref{prop:splitting}.
\begin{lemma}[Verification of assumptions]
The spaces $V_{h,0}$ and $\mathcal{B}_h \coloneqq V_{h,1} \oplus V_{h,2}$ defined in \eqref{eq:testSpace} satisfy the assumptions in \eqref{eq:7} and \eqref{eq:9} with $\Psi_h$ consisting of scaled Lagrange basis functions.
\end{lemma}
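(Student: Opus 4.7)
The plan is to reduce both \eqref{eq:7} and \eqref{eq:9} to local, cell-based arguments, using the parabolic scaling \eqref{eq:ParabolicScaling}, the locality of the bubble functions, and the inverse estimates of Lemma~\ref{lem:inverseEst}.

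For \eqref{eq:9}, I first establish a cell-wise norm equivalence: every $b \in \mathcal{B}_h$ whose support lies in a single cell $K$ (volume bubble) or in the two cells sharing a face $f \in \mathcal{F}_{t,0}$ (face bubble) satisfies $\|b\|_V \eqsim h_{K,x}^{-1}\|b\|_{L^2(\mathrm{supp}\, b)}$. The upper bound combines Lemma~\ref{lem:inverseEst} for the $H^{1/2}$ part with the spatial inverse inequality $\|\nabla_x b\|_{L^2} \lesssim h_{K,x}^{-1}\|b\|_{L^2}$ and the parabolic scaling $h_{K,t}^{-1}\eqsim h_{K,x}^{-2}$. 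The lower bound follows from the spatial Poincar\'e inequality: volume bubbles vanish on all of $\partial K$, and face bubbles $\mathcal{B}_f\psi_j$ vanish on $\partial K_{i,x}\setminus f_x$ in the two adjacent cells $K_i$, so $\|b\|_{L^2(K)} \lesssim h_{K,x}\|\nabla_x b\|_{L^2(K)} \leq h_{K,x}\|b\|_V$. Scaling each Lagrange basis function $\tilde\psi$ of $V_{h,1}\oplus V_{h,2}$ by $c \eqsim h_{K,x}/\|\tilde\psi\|_{L^2}$ produces $\psi \in \Psi_h$ with $\|\psi\|_V \eqsim 1$. The equivalence \eqref{eq:9} then follows from the finite overlap of bubble supports together with a finite-dimensional norm equivalence on the reference cell applied to the bubble polynomial space.

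For \eqref{eq:7}, the triangle inequality provides the upper bound. For the lower bound, the plan is to construct a uniformly $V$-bounded projection $P_h \colon V_h \to V_{h,0}$ satisfying $P_h b = 0$ for every $b \in \mathcal{B}_h$. Given such $P_h$, writing $u = v + b$ with $v \in V_{h,0}$ and $b \in \mathcal{B}_h$ yields $v = P_h u$, whence $\|v\|_V \lesssim \|u\|_V$ and then $\|b\|_V \leq \|u\|_V + \|v\|_V \lesssim \|u\|_V$. I would take $P_h$ as in Definition~\ref{def:SZ} with dual weights $\psi_j^*$ on cells $S_j \ni j$ chosen to satisfy, in addition to the bi-orthogonality relation \eqref{eq:DualWeight} with the piecewise-linear Lagrange basis of $V_{h,0}|_{S_j}$, the extra condition $\langle b,\psi_j^*\rangle_{S_j} = 0$ for every $b \in \mathcal{B}_h$ supported in $S_j$. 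Such weights exist because $V_h|_{S_j}$ is a finite-dimensional space whose basis is the nodal basis of $V_{h,0}|_{S_j}$ together with the bubble basis of $\mathcal{B}_h|_{S_j}$; taking $\psi_j^*$ as the functional in the dual basis associated with the vertex $j$ delivers the required dual weight. The $V$-boundedness of $P_h$ follows exactly as in the proof of Corollary~\ref{cor:Stability}, using the parabolic Poincar\'e inequality and the modification of Remark~\ref{rem:IforUandV} to handle the endpoint constraint at $t=T$.

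The main technical obstacle is the simultaneous realization of bi-orthogonality with the $V_{h,0}$-basis and annihilation of $\mathcal{B}_h$ for the dual weights $\psi_j^*$, together with uniform bounds on their $L^2$-sizes after mesh scaling. This reduces to a dimension count and a conditioning estimate on the reference cell depending only on the polynomial degree $p=(p_t,p_x)$; both are mesh-independent, so the resulting projection is uniformly $V$-bounded.
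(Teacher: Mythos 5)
Your plan for \eqref{eq:7} is essentially the paper's argument. You build a projection $P_h\colon V_h\to V_{h,0}$ whose dual weights annihilate the bubble space; the paper does exactly this by choosing the Scott--Zhang weights $\psi_j^*$ bi-orthogonal to the \emph{full} basis of $V_h|_{S_j}$ (so that the reduced sum $\mathcal{I}\coloneqq\sum_{j\le N}\langle\psi_j^*,\bigcdot\rangle_{S_j}\psi_j$ automatically satisfies $\mathcal{I}b=0$ for $b\in\mathcal{B}_h$). Your existence argument via a dimension count on the reference cell is the right justification, and invoking Corollary~\ref{cor:Stability} (with the $V$-modification of Remark~\ref{rem:IforUandV}) for the $V$-stability is the same closing step.

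For \eqref{eq:9}, however, there is a gap in the lower bound. Your cell-wise norm equivalence $\|b\|_V\eqsim h_{K,x}^{-1}\|b\|_{L^2(\mathrm{supp}\,b)}$ is established via \emph{spatial} Poincar\'e, which works for an \emph{individual} volume bubble (vanishing on all of $\partial K$) or an individual face bubble (vanishing on $\partial K_{i,x}\setminus f_x$). But a generic $b\in\mathcal{B}_h$ restricted to a cell $K$ is a combination of a volume bubble and face bubbles associated with several different spatial faces of $K$, and such a combination need not vanish on any fixed part of $\partial K_x$; hence spatial Poincar\'e does not apply to $b|_K$. Consequently the inequality you need, namely $\sum_K h_{K,x}^{-2}\|b\|_{L^2(K)}^2\lesssim\|b\|_V^2$ \emph{for all} $b\in\mathcal{B}_h$, does not follow from the single-bubble estimate plus ``finite-dimensional norm equivalence''; the latter only relates $\|b\|_{L^2(K)}^2$ to $\sum c_\psi^2$, not $\|b\|_V^2$ to $\sum_K h_{K,x}^{-2}\|b\|_{L^2(K)}^2$. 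The paper circumvents this by writing $b=(1-\mathcal{I})b$ and invoking Corollary~\ref{cor:Stability}, which directly gives $\sum_K h_{K,x}^{-2}\|(1-\mathcal{I})b\|_{L^2(K)}^2\lesssim\|b\|_V^2$ uniformly in $b$. An alternative repair closer in spirit to your Poincar\'e idea: every $b\in\mathcal{B}_h$ (volume \emph{and} face bubbles alike) vanishes on the two temporal faces $\partial K_t\times K_x$ of each $K$, so a temporal $H^{1/2}$-Poincar\'e $\|b(\cdot,x)\|_{L^2(K_t)}\lesssim h_{K,t}^{1/2}\,|b(\cdot,x)|_{H^{1/2}(K_t)}$ (a scaling argument on the finite-dimensional polynomial space vanishing at both endpoints) applies to the whole of $\mathcal{B}_h$; integrating over $K_x$, summing over $K$, and using $h_{K,t}\eqsim h_{K,x}^2$ yields the needed lower bound.
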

\begin{proof}

Let $(\psi_j)_{j=1}^N$ denote the nodal basis of $V_{h,0}$ and let $(\psi_j)_{j=N+1}^M $ denote a local basis of $\mathcal{B}_h$ that is $L^2(Q)$-normalized, i.e.~$\lVert \psi_j \rVert_{L^2(Q)}=1$ for all $j=N+1,\dots,M$.
We define the Scott--Zhang interpolation operator $\overline{\mathcal{I}}\colon W \to V_h$ as in Definition~\ref{def:SZ}, that is, with suitable bi-orthogonal weights $(\psi_j^*)_{j=1}^{N+M}$ the operator reads 
\begin{equation*}
\overline{\mathcal{I}} \coloneqq \sum_{j=1}^{N+M} \langle \psi_j^*,\bigcdot \rangle_{S_j} \psi_j.
\end{equation*}
Moreover, we set its reduced version $\mathcal{I}\colon W \to V_{h,0}$ as
\begin{equation*}
\mathcal{I} \coloneqq \sum_{j=1}^N \langle \psi_j^*,\bigcdot \rangle_{S_j} \psi_j.
\end{equation*}
Let $v\in V_{h,0}$ and $b \in \mathcal{B}_h$.
By definition we obtain $\mathcal{I} b = 0$. 
This identity and the stability properties of $\mathcal{I}$ stated in Corollary~\ref{cor:Stability} yield
\begin{equation*}
\lVert v \rVert_V + \lVert b \rVert_V = \lVert \mathcal{I} (v + b) \rVert_V + \lVert (1-\mathcal{I}) (v + b) \rVert_V  \lesssim \lVert v+b\rVert_V.
\end{equation*}
This verifies \eqref{eq:7}.

Theorem~\ref{thm:LocalizationIIold}, Lemma~\ref{lem:inverseEst}, and Lemma~\ref{lem:locApx} yield the norm equivalence
\begin{equation}\label{eq:NormEquiB}
\begin{aligned}
\|b\|_V^2 & = \|(1-\mathcal{I}) b\|_V^2  \lesssim \sum_{K \in \tria} h_{K,t}^{-1}\, \|b\|_{L^2(K)}^2 = \sum_{K \in \tria} h_{K,t}^{-1}\, \| (1-\mathcal{I})b\|_{L^2(K)}^2 \\
& \lesssim \|b\|_V^2.
\end{aligned}
\end{equation}
Since the number $\#\{ j \in \lbrace N+1,\dots,M\rbrace\colon |\supp \psi_j \cap K|>0\} \eqsim 1$ of bubble basis functions supported on each $K\in \tria$ is uniformly bounded, we obtain for any $b\in \mathcal{B}_h$ with representation $b = \sum_{j=N+1}^M c_j \psi_j$ and $K\in \tria$ the equivalence
\begin{equation*}
\Big\|\sum_{\substack{j = N+1 ,\dots,M \\ |\supp \psi_j \cap K|>0}} c_j \psi_j\Big\|_{L^2(K)} ^2\eqsim \sum_{\substack{j = N+1 ,\dots,M \\ |\supp \psi_j \cap K|>0}} c_j^2\, \| \psi_j\|_{L^2(K)}^2.
\end{equation*}
This bound, \eqref{eq:NormEquiB}, and the mesh properties stated in Assumption~\ref{ass:Partition}  verify \eqref{eq:9} with basis 
\begin{equation*}
\Psi_h = \big(\textup{diam}_x(\supp(\psi_j))  \, \psi_j\big)_{j=N+1}^M.\qedhere
\end{equation*}
\end{proof}

\subsection{Additive subspace correction}\label{subsec:AddSubspaceCorr}
In this section we design a preconditioner $G_{h,0}= G_L\colon V_L'\to V_L$ with $V_L \coloneqq W_{h,0}^p \subset V$ with the property \eqref{eq:8}. 
In view of a later construction of a preconditioner at the trial side, we consider the more general situation of arbitrary polynomial degrees $p = (p_t,p_x)\in \mathbb{N}^2$. 
In order to design $G_L$, we generalize the fractional spaces defined in \eqref{eq:InterpolSpaceTime}, that is, we set for any $s \in [0,1]$ the interpolation spaces
\begin{align*}
&\cH^{2s}(\Omega)\coloneqq [H^1_0(\Omega) \cap H^2(\Omega),L^2(\Omega)]_{1-s,2},\quad
\cH^s(\cJ)\coloneqq [H^1_{,0}(\cJ),L^2(\cJ)]_{1-s,2},\\
&\cH^{2s,s}(Q)\coloneqq [L^2(\cJ;\cH^2(\Omega)) \cap \cH^1(\cJ;L^2(\Omega)),L^2(Q)]_{1-s,2}.
\end{align*}
In particular, we obtain the isomorphisms
\begin{align*}
\cH^{1,1/2}(Q)& \simeq L^2(\cJ;\cH^{1}(\Omega)) \cap \cH^{1/2}(\cJ;L^2(\Omega))\\
& \simeq L^2(\cJ;H^1_0(\Omega)) \cap H^{1/2}_{,0}(\cJ;L^2(\Omega)) = V.
\end{align*}

The operator $G_L$ will be of multi-level type. We start our design with collecting some ingredients for the \emph{uniform refinement case}.
Recall our initial partition $\tria_0 = \tria_t \otimes \tria_x$ of $Q$ and refinement strategy discussed in Section~\ref{subsec:tria}. Uniform refinements lead to a nested sequence of conforming partitions 
\begin{equation*}
\tria_0 = \widehat{\tria}_0 < \widehat{\tria}_1 < \widehat{\tria}_2 < \cdots .
\end{equation*}
Each prism $K=K_t \times K_x \in \widehat{\tria}_\ell$ with $\ell\in \mathbb{N}_0$ consists of a time interval $K_t$ with $\diam(K_t) = |K_t| \eqsim \varrho^{-2\ell}$ and a uniformly shape regular $d$-simplex $K_x$ with $\diam( K_x)\eqsim \varrho^{-\ell}$ with constant $\varrho = 2$. 
For each uniform refinement $\widehat{\tria}_\ell$, $\ell \in \mathbb{N}_0$, the corresponding finite element spaces read
\begin{equation*}
\widehat{V}_\ell \coloneqq \{v \in \cH^{1,1/2}(Q)\colon v|_{K_t \times K_x} \in \P_{p_t}(K_t) \otimes \P_{p_x}(K_x) \text{ for all }K \in \widehat{\tria}_\ell\} \subset V.
\end{equation*}
Classical approximation results verify the Jackson estimate
\begin{equation}\label{eq:Jackson}
\min_{w \in \widehat{V}_\ell}\|u-w\|_{L^2(Q)} \lesssim \varrho^{-2\ell} \|u\|_{\cH^{2,1}(Q)}\qquad \text{for all }u \in \cH^{2,1}(Q).
\end{equation}
Writing $\widehat{V}_\ell = \widehat{V}_{\ell,t} \otimes \widehat{V}_{\ell,x}$, for $s \in \{0,1\}$, and thus by interpolation for all $s \in [0,1]$, we have $\|\bigcdot\|_{\cH^{s}(\cJ)} \lesssim \varrho^{2\ell s}\|\bigcdot\|_{L^2(\cJ)}$ on $\widehat{V}_{\ell,t}  $, and so $\|\bigcdot\|_{\cH^{s}(\cJ;L^2(\Omega))} \lesssim \varrho^{2\ell s}\|\bigcdot\|_{L^2(Q)}$ on $\widehat{V}_{\ell} $. For $s<3/4$, we have $\|\bigcdot\|_{\cH^{2s}(\Omega)} \lesssim \varrho^{2\ell s}\|\bigcdot\|_{L^2(\Omega)}$ on $\widehat{V}_{\ell,x}$ \cite{75.64}, and so 
$\|\bigcdot\|_{L^2(\cJ;\cH^{2s}(\Omega))} \lesssim \varrho^{2\ell s}\|\bigcdot\|_{L^2(Q)}$ on $\widehat{V}_{\ell}$. Since $\cH^{2s,s}(Q) \simeq L^2(\cJ;\cH^{2s}(\Omega)) \cap \cH^{s}(\cJ;L^2(\Omega))$, these bounds yield for any $\gamma \in [0,3/4)$ the Bernstein estimate
\begin{equation}\label{eq:Bernstein}
 \|w\|_{\cH^{2\gamma,\gamma}(Q)} \lesssim \varrho^{2 \ell \gamma} \|w\|_{L^2(Q)} \qquad \text{for all }w \in \widehat{V}_\ell. 
\end{equation}

As a consequence of \eqref{eq:Jackson}--\eqref{eq:Bernstein}, and the nesting $\widehat{V}_\ell \subset \widehat{V}_{\ell+1}$, we have the following result involving the $L^2(Q)$-orthogonal projector onto $\widehat{V}_i$ denoted by
\begin{equation*}
P_\ell \colon L^2(Q) \to \widehat{V}_\ell\qquad\text{with } P_{-1}\coloneqq 0.
\end{equation*}
\begin{theorem}[Norm equivalence I]\label{thm:NormEqui}
 One has for $s \in (-\frac34,\frac34)$ the equivalence
\begin{equation*}
\|u\|^2_{\cH^{2s,s}(Q)} \eqsim \sum_{\ell=0}^\infty \varrho^{4s\ell}\|(P_\ell -P_{\ell-1})u\|_{L^2(Q)}^2 \qquad \text{for all }u \in \cH^{2s,s}(Q),
\end{equation*}
where $\cH^{2s,s}(Q)\coloneqq \cH^{-2s,-s}(Q)'$ for $s<0$.
\end{theorem}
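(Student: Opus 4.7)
The claim is a classical multilevel Jackson--Bernstein norm equivalence, and my plan is to split the proof according to the sign of $s$: handle $s=0$ by Parseval, $s\in(0,\tfrac34)$ by the standard interpolation-space characterisation on a nested subspace scale, and $s\in(-\tfrac34,0)$ by duality. The case $s=0$ follows immediately because the $L^2(Q)$-orthogonal projections $P_\ell$ satisfy $P_\ell u\to u$ in $L^2(Q)$ for all $u\in L^2(Q)$ (using \eqref{eq:Jackson} on the dense subspace $\cH^{2,1}(Q)$ together with $\|P_\ell\|_{L^2(Q)\to L^2(Q)}\leq 1$), so $(P_\ell-P_{\ell-1})_{\ell\geq 0}$ is a pairwise orthogonal resolution of the identity and Parseval yields equality of the two sides.

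For $s\in(0,\tfrac34)$, I would invoke the classical Jackson--Bernstein characterisation of interpolation norms on a nested subspace scale, in the spirit of \cite[Sec.~4.1]{Oswald94}. Fixing an auxiliary exponent $\gamma\in(s,\tfrac34)$, interpolating \eqref{eq:Jackson} with the trivial bound $\|(I-P_\ell)u\|_{L^2(Q)}\leq \|u\|_{L^2(Q)}$ yields the level-$\gamma$ Jackson estimate $\|u-P_\ell u\|_{L^2(Q)}\lesssim \varrho^{-2\gamma\ell}\|u\|_{\cH^{2\gamma,\gamma}(Q)}$. Combined with \eqref{eq:Bernstein} at the same level $\gamma$, the standard Peetre $K$-functional sandwich argument gives
\begin{equation*}
\|u\|_{[\cH^{2\gamma,\gamma}(Q),L^2(Q)]_{1-s/\gamma,2}}^2\eqsim \sum_{\ell=0}^\infty \varrho^{4s\ell}\|(P_\ell-P_{\ell-1})u\|_{L^2(Q)}^2,
\end{equation*}
and the reiteration theorem for real interpolation identifies the left-hand-side space with $\cH^{2s,s}(Q)$, concluding the case of positive $s$.

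For $s\in(-\tfrac34,0)$, I would transfer the equivalence by duality with pivot $L^2(Q)$. Each $P_\ell$ is $L^2(Q)$-self-adjoint, so its extension to the dual preserves the orthogonal decomposition associated to $(P_\ell-P_{\ell-1})$. For $u\in\cH^{2s,s}(Q)\coloneqq \cH^{-2s,-s}(Q)'$ and $v\in\cH^{-2s,-s}(Q)$, applying Cauchy--Schwarz to the bilinear pairing $\sum_\ell \langle(P_\ell-P_{\ell-1})u,(P_\ell-P_{\ell-1})v\rangle_{L^2(Q)}$ with the weights $\varrho^{4s\ell}$ and $\varrho^{-4s\ell}$ reduces the upper bound to the already established equivalence for $-s\in(0,\tfrac34)$. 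The reverse inequality is obtained by constructing a near-extremal test function $v$ level by level from the multilevel representation of $u$.

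The main obstacle I expect is the sharp threshold $s<\tfrac34$, which is imposed by the Bernstein inequality \eqref{eq:Bernstein} and is inherent to the spatial factor: piecewise polynomial functions in $\widehat{V}_{\ell,x}$ with homogeneous boundary values on $\partial\Omega$ are not uniformly controlled in $\cH^{2\gamma}(\Omega)$ at $\gamma=\tfrac34$, since this is the regularity level at which Dirichlet traces enter the interpolation scale, see \cite{75.64}. The reiteration argument above therefore requires the auxiliary parameter $\gamma$ to satisfy $s<\gamma<\tfrac34$, and pushing the range of $s$ further would require a strictly stronger spatial Bernstein estimate that is not available with the present finite element spaces.
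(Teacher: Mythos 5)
Your outline is correct, but note that the paper itself does not prove this statement: its proof consists of the citation \cite[Thm.~2.1]{DahmenStevenson99}, and your argument is essentially a reconstruction of the standard machinery behind that reference (and behind \cite[Sec.~4.1]{Oswald94}). Concretely, you supply what the paper outsources: Parseval for $s=0$; for $s\in(0,\tfrac34)$ the Jackson estimate at an auxiliary level $\gamma\in(s,\tfrac34)$ obtained by operator interpolation from \eqref{eq:Jackson}, combined with the Bernstein estimate \eqref{eq:Bernstein} and reiteration, which correctly explains why the threshold $\tfrac34$ is dictated by the spatial estimate of \cite{75.64}; and duality with pivot $L^2(Q)$ for $s<0$. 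Two small steps you gloss over are standard but should be acknowledged if you write this out in full: (i) the Jackson--Bernstein characterisation is naturally stated in terms of the best-approximation errors $\|u-P_\ell u\|_{L^2(Q)}$, and passing to the detail terms $\|(P_\ell-P_{\ell-1})u\|_{L^2(Q)}$ requires the discrete Hardy-type summation argument (the same computation as in the paper's Corollary~\ref{corol:1}, run in both directions); (ii) in the duality step you need density of $\bigcup_\ell \widehat{V}_\ell$ in $\cH^{-2s,-s}(Q)$ (a consequence of the Jackson estimate) so that the telescoping expansion $u=\sum_\ell(P_\ell-P_{\ell-1})u$ converges in the dual norm and your level-by-level near-extremal test function argument closes. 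With these points made explicit, your proof is a valid self-contained alternative to the paper's citation, at the cost of length; the paper's route buys brevity by leaning on \cite{DahmenStevenson99}, which establishes exactly this equivalence for the parabolic scale $\cH^{2s,s}(Q)$ including the negative range by the same duality device.
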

\begin{proof}
This result is shown in \cite[Thm.~2.1]{DahmenStevenson99}.
\end{proof}
\begin{corollary}[Norm equivalence II] \label{corol:1} One has for all $s \in (0,\frac34)$ and $u \in \cH^{2s,s}(Q)$
\begin{equation*}
\sum_{\ell=0}^\infty \varrho^{4s\ell}\|(\identity - P_\ell)u\|_{L^2(Q)}^2 + \lVert P_0 u \rVert_{L^2(Q)}^2 \eqsim \|u\|^2_{\cH^{2s,s}(Q)}.
\end{equation*}
\end{corollary}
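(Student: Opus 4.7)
The plan is to derive Corollary~\ref{corol:1} from Theorem~\ref{thm:NormEqui} by a summation-by-parts (Abel) argument, exploiting that the successive differences $(P_\ell-P_{\ell-1})u$ are mutually $L^2(Q)$-orthogonal (the $P_\ell$ are $L^2$-orthogonal projectors onto the nested spaces $\widehat V_\ell$). Write $a_k \coloneqq \|(P_k-P_{k-1})u\|_{L^2(Q)}^2$, so that $a_0=\|P_0 u\|_{L^2(Q)}^2$, and note the density of $\bigcup_\ell \widehat V_\ell$ in $L^2(Q)$ gives $P_\ell u \to u$ in $L^2(Q)$ as $\ell\to\infty$.

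First I would observe that by orthogonality of the telescoping sum,
\begin{equation*}
\|(\identity-P_\ell)u\|_{L^2(Q)}^2 \;=\; \Big\|\sum_{k=\ell+1}^\infty (P_k-P_{k-1})u\Big\|_{L^2(Q)}^2 \;=\; \sum_{k=\ell+1}^\infty a_k.
\end{equation*}
Then I would switch the order of summation (Fubini for non-negative terms):
\begin{equation*}
\sum_{\ell=0}^\infty \varrho^{4s\ell}\,\|(\identity-P_\ell)u\|_{L^2(Q)}^2
\;=\; \sum_{k=1}^\infty a_k \sum_{\ell=0}^{k-1}\varrho^{4s\ell}
\;=\; \sum_{k=1}^\infty a_k\,\frac{\varrho^{4sk}-1}{\varrho^{4s}-1}.
\end{equation*}
For $s\in(0,\tfrac34)$ we have $\varrho^{4s}>1$ is a fixed constant, whence $\frac{\varrho^{4sk}-1}{\varrho^{4s}-1}\eqsim \varrho^{4sk}$ uniformly for $k\ge 1$. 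Adding the $\lVert P_0 u\rVert_{L^2(Q)}^2 = a_0 = \varrho^{0}\,a_0$ contribution to recover the $k=0$ term yields
\begin{equation*}
\sum_{\ell=0}^\infty \varrho^{4s\ell}\,\|(\identity-P_\ell)u\|_{L^2(Q)}^2 + \|P_0 u\|_{L^2(Q)}^2 \;\eqsim\; \sum_{k=0}^\infty \varrho^{4sk}\,\|(P_k-P_{k-1})u\|_{L^2(Q)}^2,
\end{equation*}
and the right-hand side is equivalent to $\|u\|^2_{\cH^{2s,s}(Q)}$ by Theorem~\ref{thm:NormEqui}.

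I do not anticipate a genuine obstacle here: the argument is a standard manipulation of Littlewood--Paley-type square functions, and the restriction $s>0$ is exactly what makes the geometric series $\sum_{\ell=0}^{k-1}\varrho^{4s\ell}$ comparable to its last term (for $s\le 0$ the sum would be dominated by the first, not the last, term and the equivalence would fail). The only point requiring a brief justification is the $L^2$-convergence $P_\ell u\to u$ used in the telescoping identity, which follows from density of $\bigcup_\ell \widehat V_\ell$ in $L^2(Q)$ under uniform refinement with fixed polynomial degree.
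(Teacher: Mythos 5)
Your proof is correct and is essentially the paper's argument: both rest on Theorem~\ref{thm:NormEqui}, the $L^2(Q)$-orthogonality of the increments $(P_k-P_{k-1})u$ (giving $\|(\identity-P_\ell)u\|_{L^2(Q)}^2=\sum_{k>\ell}\|(P_k-P_{k-1})u\|_{L^2(Q)}^2$), an interchange of the two sums, and the geometric series generated by $\varrho^{4s}>1$, with the term $\|P_0u\|_{L^2(Q)}^2$ supplying the missing $k=0$ contribution. The only difference is cosmetic: you obtain both inequalities at once from the exact identity with the factor $(\varrho^{4sk}-1)/(\varrho^{4s}-1)\eqsim\varrho^{4sk}$, whereas the paper treats the direction $\|u\|^2_{\cH^{2s,s}(Q)}\lesssim{}$(sum) separately using $\|P_\ell(\identity-P_{\ell-1})u\|_{L^2(Q)}\le\|(\identity-P_{\ell-1})u\|_{L^2(Q)}$.
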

\begin{proof}
Any $u\in \cH^{2s,s}(Q)$ satisfies due to Theorem~\ref{thm:NormEqui}
\begin{align*}
&\sum_{\ell=0}^\infty \varrho^{4s\ell}\|(\identity - P_\ell)u\|_{L^2(Q)}^2
=\sum_{\ell=0}^\infty \varrho^{4s\ell}\sum_{j=0}^\infty \|(P_j-P_{j-1})(\identity - P_\ell)u\|_{L^2(Q)}^2
\\
&=\sum_{\ell=0}^\infty \varrho^{4s\ell} \sum_{j > \ell} \|(P_{j+1} - P_{j})u\|_{L^2(Q)}^2
\\
&=\sum_{j=1}^\infty \Big(\sum_{\ell=0}^{j-1} \varrho^{4s(\ell-j-1)} \Big)\varrho^{4s(j+1)}  \|(P_{j+1} - P_j)u\|_{L^2(Q)}^2\\
&\leq \tfrac{1}{1-\varrho^{-4 s}}\sum_{j=1}^\infty \varrho^{4s(j+1)}  \|(P_{j+1} - P_j)u\|_{L^2(Q)}^2\lesssim  \|u\|^2_{\cH^{2s,s}(Q)}.
\end{align*}
Combining this estimate with  $\lVert P_0 u \rVert_{L^2(Q)}^2\leq \lVert u \rVert_{L^2(Q)}^2 \lesssim \|u\|^2_{\cH^{2s,s}(Q)}$ yields the lower bound.
The upper bound follows from
\begin{align*}
\|u\|^2_{\cH^{2s,s}(Q)} &\eqsim \sum_{\ell=0}^\infty \varrho^{4s\ell}\|(P_\ell-P_{\ell-1})u\|_{L^2(Q)}^2 \\
&= \sum_{\ell=1}^\infty \varrho^{4s\ell}\|P_\ell( \identity -P_{\ell-1})u\|_{L^2(Q)}^2 + \lVert P_0 u\rVert_{L^2(Q)}^2\\
 &\leq \varrho^{4s} \sum_{\ell=0}^\infty \varrho^{4s\ell}\|(\identity - P_\ell)u\|_{L^2(Q)}^2 + \lVert P_0 u\rVert_{L^2(Q)}^2. \qedhere
\end{align*}
\end{proof}
\begin{proposition}[Strengthened Cauchy-Schwarz] \label{prop:1} 
For $s \in (0,3/4)$ one has
\begin{equation*}
\Big\|\sum_{\ell=0}^\infty v_\ell\Big\|_{\cH^{2s,s}(Q)}^2 \lesssim \sum_{\ell=0}^\infty \varrho^{4 s \ell} \|v_\ell\|_{L^2(Q)}^2 \qquad \text{for all }(v_\ell)_{\ell=0}^\infty \in (\widehat{V}_\ell)_{\ell=0}^\infty .
\end{equation*}
\end{proposition}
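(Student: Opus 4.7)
The plan is to derive the estimate from the multi-level norm equivalence of Theorem~\ref{thm:NormEqui} together with a weighted Cauchy--Schwarz argument, exploiting the nesting $\widehat{V}_\ell \subset \widehat{V}_{\ell+1}$. Set $u := \sum_{\ell = 0}^\infty v_\ell$ (it suffices to treat finite sums and then pass to the limit), and abbreviate $Q_k := P_k - P_{k-1}$. Theorem~\ref{thm:NormEqui} immediately gives
\begin{equation*}
\|u\|_{\cH^{2s,s}(Q)}^2 \eqsim \sum_{k=0}^\infty \varrho^{4sk} \|Q_k u\|_{L^2(Q)}^2,
\end{equation*}
so everything boils down to bounding the right-hand side by $\sum_\ell \varrho^{4 s \ell} \|v_\ell\|_{L^2(Q)}^2$.

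The first key observation is that nesting kills the lower-level terms: for $\ell < k$ we have $v_\ell \in \widehat{V}_\ell \subset \widehat{V}_{k-1}$, hence $P_{k-1} v_\ell = P_k v_\ell = v_\ell$ and thus $Q_k v_\ell = 0$. Consequently $Q_k u = \sum_{\ell \geq k} Q_k v_\ell$, and since $Q_k$ is an $L^2(Q)$-orthogonal projection,
\begin{equation*}
\|Q_k u\|_{L^2(Q)} \;\leq\; \sum_{\ell \geq k} \|v_\ell\|_{L^2(Q)}.
\end{equation*}
Now I apply a weighted Cauchy--Schwarz inequality with geometric weights $\varrho^{-\eps(\ell-k)}$, where $\eps \in (0, 4s)$ is fixed (such $\eps$ exists precisely because $s>0$). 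This yields
\begin{equation*}
\|Q_k u\|_{L^2(Q)}^2 \;\leq\; \Big(\sum_{\ell \geq k} \varrho^{-\eps(\ell-k)}\Big) \sum_{\ell \geq k} \varrho^{\eps(\ell-k)} \|v_\ell\|_{L^2(Q)}^2 \;\lesssim\; \sum_{\ell \geq k} \varrho^{\eps(\ell-k)} \|v_\ell\|_{L^2(Q)}^2,
\end{equation*}
with the constant depending only on $\eps$.

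Inserting this into the multi-level equivalence and swapping the order of summation, I obtain
\begin{equation*}
\sum_{k=0}^\infty \varrho^{4sk} \|Q_k u\|_{L^2(Q)}^2 \;\lesssim\; \sum_{\ell=0}^\infty \|v_\ell\|_{L^2(Q)}^2 \sum_{k=0}^\ell \varrho^{(4s - \eps) k + \eps \ell},
\end{equation*}
and the inner geometric sum is $\lesssim \varrho^{(4s - \eps) \ell + \eps \ell} = \varrho^{4 s \ell}$ by the choice $\eps < 4s$. Combining with the norm equivalence closes the argument. The main subtlety is choosing the exponent $\eps$ strictly between $0$ and $4s$: a naive Cauchy--Schwarz without a geometric weight would only bound $\|Q_k u\|_{L^2(Q)}^2$ by $(\sum_{\ell\geq k} 1)\sum_{\ell \geq k}\|v_\ell\|_{L^2(Q)}^2$, which diverges. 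The upper restriction $s < 3/4$ plays no role here; it enters only in Theorem~\ref{thm:NormEqui}, which I use as a black box.
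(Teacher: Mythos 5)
Your argument is correct, but it follows a genuinely different route from the paper. The paper proves Proposition~\ref{prop:1} directly: it expands $\bigl\|\sum_\ell v_\ell\bigr\|_{\cH^{2s,s}(Q)}^2$ as a double sum of $\cH^{2s,s}(Q)$-inner products, bounds each pair by $|\langle v_\ell,v_j\rangle_{\cH^{2s,s}(Q)}|\leq \|v_\ell\|_{\cH^{2(s-\eps),s-\eps}(Q)}\|v_j\|_{\cH^{2(s+\eps),s+\eps}(Q)}$ with $s\pm\eps\in(0,3/4)$, invokes the Bernstein estimate \eqref{eq:Bernstein} at the two shifted exponents to produce the geometric off-diagonal factor $\varrho^{-2\eps|j-\ell|}$, and sums; this is the pairwise ``strengthened Cauchy--Schwarz'' inequality that gives the proposition its name, and the only ingredient is \eqref{eq:Bernstein} (the restriction $s<3/4$ enters through the choice of $\eps$). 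You instead take Theorem~\ref{thm:NormEqui} as a black box, use the nesting $\widehat{V}_\ell\subset\widehat{V}_{k-1}$ to kill the contributions with $\ell<k$, exploit that $P_k-P_{k-1}$ is an $L^2(Q)$-orthogonal projector, and close with a weighted Cauchy--Schwarz with weight $\varrho^{\eps(\ell-k)}$, $\eps\in(0,4s)$, followed by a geometric summation; all steps check out, the exchange of summation is legitimate by nonnegativity, and there is no circularity since Theorem~\ref{thm:NormEqui} is quoted from the literature and does not rest on Proposition~\ref{prop:1}. The trade-off is that your proof leans on the deeper multilevel characterization (whose proof behind the scenes uses both the Jackson estimate \eqref{eq:Jackson} and Bernstein), whereas the paper's proof is self-contained given \eqref{eq:Bernstein} alone and additionally yields the pairwise inner-product bound, which is of independent use. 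Two small points you gloss over: to apply Theorem~\ref{thm:NormEqui} for $s\in(1/2,3/4)$ you should note that finite sums of elements of $\widehat{V}_\ell$ do lie in $\cH^{2s,s}(Q)$, which again follows from \eqref{eq:Bernstein}; and the passage from finite to infinite sums should be phrased via the Cauchy property of the partial sums when the right-hand side is finite — both are routine.
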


\begin{proof} Let $\eps>0$ be such that $s \pm \eps \in (0,3/4)$ and let $(v_\ell)_{\ell=1}^\infty \in (\widehat{V}_\ell)_{\ell=1}^\infty$. By \eqref{eq:Bernstein} we obtain for any indices $\ell,j\in \mathbb{N}_0$
\begin{align*}
|\langle v_\ell,v_j\rangle_{\cH^{2s,s}(Q)}| &\leq \|v_\ell\|_{\cH^{2(s-\eps),s-\eps}(Q)}\|v_j\|_{\cH^{2(s+\eps),s+\eps}(Q)}\\
& \lesssim \varrho^{2\eps(j-\ell)}  \varrho^{2 s \ell} \|v_\ell\|_{L^2(Q)} \varrho^{2 s j} \|v_j\|_{L^2(Q)}.
\end{align*}
Hence, thanks to the estimate $\sum_{\ell= 0}^\infty a_\ell \big(\sum_{j=0}^\ell \rho^{-(\ell-j)} a_j\big)\leq (1-\rho^{-1})^{-1} \sum_{\ell=0}^\infty a_\ell^2$ for $(a_\ell)_{\ell=0}^\infty \subset \mathbb{R}$ and $\rho >1$, and the Cauchy-Schwarz inequality in $\ell^2(\mathbb{N}_0)$, we obtain
\begin{align*}
&\Big\|\sum_{\ell=0}^\infty v_\ell\Big\|_{\cH^{2s,s}(Q)}^2  = \sum_{\ell,j=0}^\infty \langle v_\ell,v_j\rangle_{\cH^{2s,s}(Q)} \lesssim \sum_{\ell,j=0}^\infty  \varrho^{-2\eps|j-\ell|}  \varrho^{2 s \ell} \|v_\ell\|_{L^2(Q)} \varrho^{2 s j} \|v_j\|_{L^2(Q)} \\
&\qquad = \sum_{\ell=0}^\infty \varrho^{4s\ell} \|v_\ell\|_{L^2(Q)}^2 + 2\, \sum_{\delta = 1}^\infty \varrho^{-2\eps \delta} \sum_{j=0}^\infty \varrho^{2 s (j+\delta)} \|v_{j+\delta}\|_{L^2(Q)} \varrho^{2 s j} \|v_{j}\|_{L^2(Q)}  \\
&\qquad\ \leq \sum_{\ell=0}^\infty \varrho^{4s\ell} \|v_\ell\|_{L^2(Q)}^2 + 2\, \sum_{\delta = 1}^\infty \varrho^{-2\eps \delta}  \sum_{j=0}^\infty \varrho^{4 s j} \|v_{j}\|^2_{L^2(Q)}\\
& \qquad = \frac{1 + \varrho^{-2\eps}}{1-\varrho^{-2\eps}} \sum_{\ell=0}^\infty \varrho^{4s\ell} \|v_\ell\|_{L^2(Q)}^2.\qedhere
\end{align*}
\end{proof}
By applying the \emph{adaptive refinement strategy} described in Section~\ref{subsec:tria} we obtain a nested sequence of generally non-conforming partitions $\tria_i$ of $Q$, that is
\begin{equation*}
\tria_0 < \tria_1 < \tria_2 < \cdots.
\end{equation*}
The elements of $\tria_i$ are prisms $K=K_t \times K_x$, where $K_t$ is an interval,
$K_x$ is a uniformly shape regular $d$-simplex, and
\begin{equation*}
\diam (K_t) \eqsim \diam (K_x)^2\qquad\text{so that}\qquad \diam (K) \eqsim \diam (K_x).
\end{equation*}
According to Assumption~\ref{ass:Partition} the partitions are uniformly \emph{graded} in the sense that neighboring prisms in $\tria_i$ have uniformly comparable diameters.
Set for all $i\in \mathbb{N}_0$ the discrete spaces 
\begin{equation*}
V_i \coloneqq V(\tria_i) \coloneqq \{v \in V \colon v|_K \in \P_{p_t}(K_t) \otimes \P_{p_x}(K_x) \text{ for all }K = K_t \times K_x \in \tria_i\}\subset V.
\end{equation*}
We equip the space $V_i$ with the usual \emph{nodal basis} denoted by $(\psi_{i,j})_{j \in \mathcal{N}^p(\tria_i)}$, where $I_i:=\mathcal{N}^p(\tria_i)$. We denote the support and spatial diameter of these functions by
\begin{equation*}
Q_{i,j}\coloneqq \supp (\psi_{i,j})\qquad \text{and} \qquad h_{i,j}\coloneqq \diam_x (Q_{i,j}).
\end{equation*}
Let $\mathcal{I}_i$ be the corresponding Scott--Zhang projector onto $V_i$ from Definition~\ref{def:SZ}.
These projectors are uniformly \emph{local} in the sense that for $j \in I_i$, for some
connected domain $\tilde{Q}_{i,j}\supset Q_{i,j}$ that is the union of a uniformly bounded number of prisms $K \in \tria_{i-1}$, it holds that 
\be \label{eq:6mod}
\begin{aligned}
(\mathcal{I}_{i-1} v)|_{Q_{i,j}} &\text{ depends only on }v|_{\tilde{Q}_{i,j}} \text{ and } \tria_{i-1}|_{\tilde{Q}_{i,j}},\\
(\mathcal{I}_{i} v)|_{Q_{i,j}} &\text{ depends only on }v|_{\tilde{Q}_{i,j}} \text{ and } \tria_{i}|_{\tilde{Q}_{i,j}}.
%
\end{aligned}
\ee
As a consequence, in the case that $\tria_i|_{\tilde{Q}_{i,j}} =\tria_{i-1}|_{\tilde{Q}_{i,j}}$, it holds that 
 $(\mathcal{I}_i v)|_{Q_{i,j}}=(\mathcal{I}_{i-1} v)|_{Q_{i,j}}$. In other words, $(\mathcal{I}_i-\mathcal{I}_{i-1})v$ vanishes outside the direct vicinity of the union of the prisms $K \in \tria_i \setminus \tria_{i-1}$. This implies that for each index $i\in \mathbb{N}$, there exists a set of nodes $J_i \subset I_i$ with
 \begin{equation}\label{eq:PropI}
 \ran(\mathcal{I}_i-\mathcal{I}_{i-1}) \subset \Span\{\psi_{i,j} \colon j \in J_i\} \quad \text{and}\quad \# J_i \lesssim \#(\tria_i \setminus \tria_{i-1}).
\end{equation}
With $J_0 \coloneqq I_0$, we have $\ran \mathcal{I}_0 = \Span\{\psi_{0,j} \colon j \in J_0\}$, and $\# J_0 \eqsim \# \tria_0$.
Since we have for any $L \in \N$ that $\dim V_L \eqsim \# \tria_L$, and \cite[eq. (3.1)]{BinevDeVore04}
\begin{equation*}
\# \tria_0+ \sum_{i=1}^L \# (\tria_i \setminus \tria_{i-1}) \leq 2 \# \tria_L,
\end{equation*}
it holds that
\be \label{eq:compl}
\sum_{i=0}^L \# J_j \lesssim \dim V_L.
\ee

The following property, that locally links the adaptively refined meshes $(\tria_i)_i$ to the uniform meshes $(\widehat{\tria}_\ell)_\ell$, holds true thanks to the uniform grading of the meshes $\tria_i$: There exists a constant $q \in \N$ such that for all $i \in \N_0$ and $j \in I_i$ there exists an index $\ell(i,j) \in \N_0$ with
\be \label{eq:1}
\widehat{\tria}_{\ell(i,j)}|_{\tilde{Q}_{i,j}} \leq \tria_{i-1}|_{\tilde{Q}_{i,j}} \leq \tria_i|_{\tilde{Q}_{i,j}} \leq \widehat{\tria}_{\ell(i,j)+q}|_{\tilde{Q}_{i,j}}.
\ee
The proof of the following theorem is based on arguments from \cite{WuZheng17} in which a (multilevel) multiplicative subspace correction method is analyzed where the underlying space is $H^1_0(\Omega)$, instead of $\cH^{2s,s}(Q)$. The strengthened Cauchy-Schwarz inequality that applies in that setting is substituted by Proposition~\ref{prop:1}.

\begin{theorem}[Norm equivalence]\label{thm:normEqui} For $s \in (0,3/4)$, $L \in \N_0$, and $v \in V_L$ one has with $J_i$ defined for all $i\in \mathbb{N}$ in \eqref{eq:PropI}
\begin{equation*}
\|v\|^2_{\cH^{2s,s}(Q)} \eqsim \nrm v\nrm_{L,s}^2 \coloneqq
\inf_{\{(v_{i,j})\colon v_{i,j}\in \Span\{\psi_{i,j}\},\,v=\sum\limits_{i=0}^L\sum\limits_{j \in J_i} v_{i,j}\}} 
\sum_{i=0}^L\sum_{j \in J_i} h_{i,j}^{-4s}\|v_{i,j}\|_{L^2(Q)}^2.
\end{equation*}
\end{theorem}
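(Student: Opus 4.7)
The plan is to establish the two inequalities separately, with both directions pivoting on the locality property \eqref{eq:1} that links adaptively refined patches to uniform refinements, so that we can invoke the uniform-refinement tools already developed (Theorem~\ref{thm:NormEqui}, Corollary~\ref{corol:1}, and Proposition~\ref{prop:1}).

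\textbf{Upper bound} $\|v\|^2_{\cH^{2s,s}(Q)} \lesssim \nrm v \nrm_{L,s}^2$. Fix an arbitrary admissible decomposition $v = \sum_{i,j} v_{i,j}$ with $v_{i,j} \in \Span\{\psi_{i,j}\}$. Using \eqref{eq:1}, I would assign to each pair $(i,j)$ a uniform level $\ell(i,j) \in \N_0$ with $h_{i,j} \eqsim \varrho^{-\ell(i,j)}$, regroup the sum as $v = \sum_{\ell \geq 0} w_\ell$ where $w_\ell \coloneqq \sum_{(i,j)\,:\,\ell(i,j)=\ell} v_{i,j}$, and observe that $w_\ell \in \widehat{V}_{\ell+q}$ by \eqref{eq:1} together with the nesting $\widehat{V}_{\ell+q} \subset \widehat{V}_{\ell+q+1}$. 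The bounded overlap among the supports $Q_{i,j}$ (uniform grading of $\tria_i$ plus a bound on how many adaptive levels $i$ can share the same uniform level $\ell$) gives $\|w_\ell\|_{L^2(Q)}^2 \lesssim \sum_{(i,j)\,:\,\ell(i,j)=\ell} \|v_{i,j}\|^2_{L^2(Q)}$. Apply Proposition~\ref{prop:1} to the sequence $(w_\ell)$ to conclude
\begin{equation*}
\|v\|^2_{\cH^{2s,s}(Q)} \lesssim \sum_{\ell=0}^\infty \varrho^{4s\ell}\|w_\ell\|^2_{L^2(Q)} \lesssim \sum_{i=0}^L \sum_{j \in J_i} h_{i,j}^{-4s}\|v_{i,j}\|^2_{L^2(Q)}.
\end{equation*}
Taking the infimum over decompositions yields the claimed bound.

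\textbf{Lower bound} $\nrm v\nrm_{L,s}^2 \lesssim \|v\|^2_{\cH^{2s,s}(Q)}$. Here I would supply an explicit decomposition via the telescope
\begin{equation*}
v = \mathcal{I}_L v = \sum_{i=0}^L (\mathcal{I}_i - \mathcal{I}_{i-1}) v, \qquad \mathcal{I}_{-1}\coloneqq 0.
\end{equation*}
By \eqref{eq:PropI} each difference $(\mathcal{I}_i - \mathcal{I}_{i-1})v$ lies in $\Span\{\psi_{i,j}\colon j\in J_i\}$, so I write $(\mathcal{I}_i - \mathcal{I}_{i-1})v = \sum_{j\in J_i} v_{i,j}$ with $v_{i,j} = c_{i,j}\psi_{i,j}$. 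Standard coefficient estimates and the finite overlap of the $\psi_{i,j}$ give $\|v_{i,j}\|^2_{L^2(Q)} \lesssim \|(\mathcal{I}_i - \mathcal{I}_{i-1})v\|^2_{L^2(Q_{i,j})}$, so that after summing in $j$ and using $h_{i,j}\eqsim \varrho^{-\ell(i,j)}$ one obtains
\begin{equation*}
\sum_{j\in J_i} h_{i,j}^{-4s}\|v_{i,j}\|^2_{L^2(Q)} \lesssim \int_Q h_i(x)^{-4s}|(\mathcal{I}_i - \mathcal{I}_{i-1})v(x)|^2\dx,
\end{equation*}
where $h_i(x)$ denotes the local mesh size. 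The remaining task is to compare this adaptive telescope with the uniform-refinement one: using \eqref{eq:6mod}, the local stability of Scott--Zhang in $L^2$, and \eqref{eq:1}, one bounds $\|(\mathcal{I}_i-\mathcal{I}_{i-1})v\|_{L^2(\tilde{Q}_{i,j})}$ locally by $\|(\identity - P_{\ell(i,j)-C})v\|_{L^2}$ on a slightly enlarged patch (via subtracting any locally-constant polynomial, which is annihilated by $\mathcal{I}_i-\mathcal{I}_{i-1}$, and inserting $P_{\ell(i,j)-C}v$). Regrouping by uniform level and invoking Corollary~\ref{corol:1} then yields
\begin{equation*}
\sum_{i=0}^L\sum_{j\in J_i} h_{i,j}^{-4s}\|v_{i,j}\|^2_{L^2(Q)} \lesssim \sum_{\ell=0}^\infty \varrho^{4s\ell}\|(\identity - P_\ell)v\|^2_{L^2(Q)} + \|P_0 v\|^2_{L^2(Q)} \eqsim \|v\|^2_{\cH^{2s,s}(Q)}.
\end{equation*}

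\textbf{Main obstacle.} The technical heart of the argument is the lower bound, specifically the comparison between the adaptive Scott--Zhang telescope $(\mathcal{I}_i - \mathcal{I}_{i-1})v$ and the uniform $L^2$-projector telescope $(P_\ell - P_{\ell-1})v$. The adaptive differences are only genuinely supported near newly refined prisms, but bounding them locally by an appropriate uniform approximation residue requires careful use of the locality property \eqref{eq:6mod}, of the $L^2$-stability of $\mathcal{I}_i$, and of the index equivalence \eqref{eq:1}; this is where the Wu--Zheng machinery (adapted from $H^1_0(\Omega)$ to $\cH^{2s,s}(Q)$ via Proposition~\ref{prop:1} in place of the classical strengthened Cauchy--Schwarz) does the real work. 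Once that local comparison is in place, summing in $\ell$ and using Corollary~\ref{corol:1} is routine.
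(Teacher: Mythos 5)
Your proposal is correct and follows essentially the same route as the paper: the lower bound via the Scott--Zhang telescope $v=\sum_{i=0}^L(\mathcal{I}_i-\mathcal{I}_{i-1})v$, the local annihilation of the coarse uniform-level approximation $P_{\ell(i,j)}v$ through \eqref{eq:6mod} and \eqref{eq:1} followed by Corollary~\ref{corol:1}, and the upper bound by regrouping an arbitrary admissible decomposition according to the uniform level $\ell(i,j)+q$ and applying Proposition~\ref{prop:1}. The one step you dismiss as ``standard coefficient estimates'' is where the paper invests real work: the local $L^2$-stability \eqref{eq:Claim} of the nodal splitting is proved via the local linear independence \eqref{eq:localindependence}, which in the presence of hanging nodes hinges on the level condition \eqref{eq:LevelDist}.
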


\begin{proof} Let $v \in V_L$. With $v_{i,j}\in \Span\{\psi_{i,j}\}$ such that $ (\mathcal{I}_i-\mathcal{I}_{i-1})v=\sum_{j \in J_i} v_{i,j}$, we decompose $v=\sum_{i=0}^L  (\mathcal{I}_i-\mathcal{I}_{i-1})v=\sum_{i=0}^L \sum_{j \in J_i}v_{i,j}$.
We claim that this decomposition is (uniformly) locally $L^2$-stable in the sense that
\begin{equation}\label{eq:Claim}
\sum_{j \in J_i} \|v_{i,j}\|_{L^2(K)}^2 \eqsim \Big\| \sum_{j \in J_i} v_{i,j}\Big\|_{L^2(K)}^2\qquad \text{for all }K\in \tria_i.
\end{equation}
Indeed, the inequality $\gtrsim$ follows from the local supports of the nodal basis functions, and so we focus on the reversed inequality.
Recall the set of Lagrange nodes $I_i\coloneqq\mathcal{N}^p(\tria_i)$ in $V_i$.
Since, modulo a harmless scaling, given an initial mesh $\tria_0$ there are only finitely many local mesh configurations, it suffices to show that with coefficients $(c_j)_{j \in I_i}\subset \mathbb{R}$
\be \label{eq:localindependence}
\sum_{j \in I_i} c_j \psi_{i,j}|_K=0\qquad \text{implies}\qquad c_j \psi_{i,j} |_{\text{int}(K)}=0  \text{ for all }j \in I_i.
\ee
The index set $I_i$ is the union over $K \in \tria_i$ of those local Lagrange nodes on $K$ that are `free', i.e.~not on a (master) facet $\tilde{F}$ of $\tilde{K} \in \tria_i$ that includes a (slave) facet $F$ of $K$ as a proper subset. The condition \eqref{eq:LevelDist} ensures that all local Lagrange nodes on $\tilde{K}$ that are on such a master facet $\tilde{F}$ are free
(cf.~\cite{GantnerStevenson23}).

In the case that all nodes of $K$ are free, then the only $\psi_{i,j}$ with $\psi_{i,j}|_{\text{int}(K)} \neq 0$ correspond to nodes in $K$, and \eqref{eq:localindependence} is obvious.
Otherwise, when $K$ has nodes that are not free, it has one or more slave facets.
If $u\coloneqq \sum_{j \in I_{i,j}} c_i \psi_{i,j}|_K=0$, then for any of those slave facets $F$, the function $u|_F$ vanishes, which means that for the corresponding neighbouring $\tilde{K}$ with master facet $\tilde{F}$, also $u|_{\tilde{F}}$ vanishes so that its degrees of freedom on $\tilde{F}$ vanish. Since the only possible $j \in I_i$ with $j \not\in K$ and $\psi_{i,j}|_{\text{int}(K)} \neq 0$ correspond to degrees of freedom of this type, the proof of \eqref{eq:localindependence} is completed. This verifies the claim in \eqref{eq:Claim}.

By the (uniform) grading of $\tria_i$, the locality of $v_{i,j}$, and \eqref{eq:Claim}, we infer that
\begin{align*}
\sum_{j \in J_i} h_{i,j}^{-4s} \| v_{i,j}\|_{L^2(Q_{i,j})}^2 \eqsim 
\sum_{j \in J_i} \sum_{K \in \tria_i} h_K ^{-4s} \| v_{i,j}\|_{L^2(K)}^2 =
\sum_{K \in \tria_i} h_K ^{-4s} \sum_{j \in J_i}  \| v_{i,j}\|_{L^2(K)}^2\\
 \eqsim
\sum_{K \in \tria_i} h_K ^{-4s} \|(\mathcal{I}_i-\mathcal{I}_{i-1})v\|_{L_2(K)}^2 \eqsim
\sum_{j \in J_i} h_{i,j} ^{-4s} \|(\mathcal{I}_i-\mathcal{I}_{i-1})v\|_{L_2(Q_{i,j})}^2.
\end{align*}

Property \eqref{eq:6mod} shows that $((\mathcal{I}_i-\mathcal{I}_{i-1})v)|_{Q_{i,j}}$ only depends on $v|_{\tilde{Q}_{i,j}}$.
This result in combination with \eqref{eq:1}, and the fact that ${\mathcal I}_i$ and ${\mathcal I}_{i-1}$ are projectors onto $V_i$ and $V_{i-1}$, respectively, show that  $((\mathcal{I}_i-\mathcal{I}_{i-1})v)|_{Q_{i,j}}=0$ for $v \in \widehat{V}_{\ell(i,j)}$.
We infer that
\begin{align} \label{eq:2}
\begin{aligned}
&\sum_{i=0}^L \sum_{j \in J_i} h_{i,j}^{-4s}\|v_{i,j}\|^2_{L^2(Q)} \eqsim \sum_{i=0}^L \sum_{j \in J_i}  h_{i,j}^{-4s} \|(\mathcal{I}_i-\mathcal{I}_{i-1})v\|_{L^2(Q_{i,j})}^2\\  
&\qquad \eqsim \sum_{i=0}^L \sum_{j \in J_i}  h_{i,j}^{-4s} \|\mathcal{I}_i(\identity-P_{\ell(i,j)})v-\mathcal{I}_{i-1} (\identity-P_{\ell(i,j)})v\|_{L^2(Q_{i,j})}^2\\ 
&\qquad \lesssim \sum_{i=0}^L \sum_{j \in J_i}  \varrho^{4s \ell(i,j)} \|(\identity-P_{\ell(i,j)})v\|_{L^2(\tilde{Q}_{i,j})}^2.
\end{aligned}
\end{align}
where for the last inequality we used the uniform $L_2(Q)$-boundedness and the locality of ${\mathcal I}_i$ and ${\mathcal I}_{i-1}$, as well as $\varrho^{-\ell(i,j)} \eqsim h_{i,j}$ by \eqref{eq:1}.

Since there exists only a finite number of overlapping patches $\tilde{Q}_{i,j}$ with the same associated value $\ell(i,j)$, one has
\begin{equation*}
\sum_{\{0 \leq i \leq L,\,j\in J_i\colon \ell(i,j)=\ell\}} \|\bigcdot\|_{L^2(\tilde{Q}_{i,j})}^2 \lesssim \|\bigcdot\|_{L^2(Q)}^2.
\end{equation*}
Hence, the expression in \eqref{eq:2} can be bounded by an application of Corollary~\ref{corol:1} by some constant multiple of
\begin{equation*}
\sum_{\ell=0}^\infty \varrho^{ 4 s \ell}  \|(\identity-P_\ell)v\|_{L^2(Q)}^2 \lesssim \|v\|^2_{\cH^{2s,s}(Q)}.
\end{equation*}

Conversely, let $v=\sum_{i=0}^L\sum_{j \in J_i} v_{i,j}$ for \emph{some} $v_{i,j}\in \Span\{\psi_{i,j}\}$.
Then $v_{i,j} \in \widehat{V}_{\tilde{\ell}(i,j)}$ with $\tilde{\ell}(i,j) \coloneqq \ell(i,j)+q$. We estimate, using  Proposition~\ref{prop:1},
\begin{align*}
&\sum_{i=0}^L \sum_{j \in J_i} h_{i,j}^{-4s}\|v_{i,j}\|^2_{L^2(Q)}  \eqsim
\sum_{i=0}^L \sum_{j \in J_i} \varrho^{4s\tilde{\ell}(i,j)}\|v_{i,j}\|^2_{L^2(Q)}\\
 &\qquad\qquad= \sum_{\tilde{\ell}=0}^\infty \varrho^{4s\tilde{\ell}} \sum_{\{0 \leq i \leq L,\,j \in J_i\colon\tilde{\ell}(i,j)=\tilde{\ell}\}}\|v_{i,j}\|^2_{L^2(Q)} \\
 &\qquad\qquad\gtrsim \sum_{\tilde{\ell}=0}^\infty \varrho^{4s\tilde{\ell}}\, \Big\|\sum_{\{0 \leq i \leq L,\,j \in J_i\colon\tilde{\ell}(i,j)=\tilde{\ell}\}}v_{i,j}\Big\|^2_{L^2(Q)} \\
 &\qquad\qquad \gtrsim \Big\|\sum_{\tilde{\ell}=0}^\infty \sum_{\{0 \leq i \leq L,\,j \in J_i\colon\tilde{\ell}(i,j)=\tilde{\ell}\}} v_{i,j}\Big\|_{\cH^{2s,s}(Q)}^2=
\|v\|_{\cH^{2s,s}(Q)}^2.\qedhere
\end{align*}
\end{proof}

Combining Theorem~\ref{thm:1} and~\ref{thm:normEqui} leads to the following result.

\begin{corollary}[Preconditioner] 
For $s \in (0,3/4)$, $0 \leq i \leq L$ and $j \in J_\ell$, let the Riesz isomorphism for $\Span \{\psi_{i,j}\}$ equipped with norm $h_{i,j}^{-2s}\|\bigcdot\|_{L^2(Q)}$ be denoted by
\begin{equation*}
R_{i,j}\colon \Span\{\psi_{i,j}\}' \rightarrow \Span \{\psi_{i,j}\},\quad g \mapsto h_{i,j}^{4s} \frac{g(\psi_{i,j})}{\|\psi_{i,j}\|_{L^2(Q)}^2} \psi_{i,j}.
\end{equation*}
Let $E_{i,j}\colon \Span \{\psi_{i,j}\}\to V_L$ denote the inclusion map of $\Span \{\psi_{i,j}\}$ into $V_L$ and set 
\begin{equation*}
G_L \coloneqq \sum_{i=0}^L \sum_{j \in J_i} E_{i,j} R_{i,j} E_{i,j}'\colon V_L' \rightarrow V_L.
\end{equation*}
Then $G_L$ is invertible with
\begin{equation*}
\inf_{0 \neq u \in V_L} \frac{\nrm u\nrm_{L,s}}{\|u\|_{\cH^{2s,s}(Q)}} \leq \frac{(G_L^{-1}\bigcdot)(\bigcdot)^{1/2}}{\|\bigcdot\|_{\cH^{2s,s}(Q)}} 
\leq \sup_{0 \neq u \in V_L} \frac{\nrm u\nrm_{L,s}}{\|u\|_{\cH^{2s,s}(Q)}} 
\quad\text{on } V_L \setminus \{0\}.
\end{equation*}
\end{corollary}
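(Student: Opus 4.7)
The plan is to assemble this corollary mechanically from the two previously established ingredients: the abstract additive subspace correction Theorem~\ref{thm:1} and the local multilevel norm equivalence Theorem~\ref{thm:normEqui}. I would instantiate the abstract setting of Theorem~\ref{thm:1} with index family $\{(i,j)\colon 0 \le i \le L,\ j \in J_i\}$, with Hilbert spaces $\U_{i,j} \coloneqq \Span\{\psi_{i,j}\}$ equipped with the weighted norm $h_{i,j}^{-2s}\|\bigcdot\|_{L^2(Q)}$, ambient space $\U \coloneqq V_L$ equipped with $\|\bigcdot\|_{\cH^{2s,s}(Q)}$, and embeddings $E_{i,j}$ as given.

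First, I would check the covering hypothesis $\bigcup_{i,j} E_{i,j}(\U_{i,j}) = V_L$. This follows from the telescoping decomposition $v = \sum_{i=0}^L (\mathcal{I}_i - \mathcal{I}_{i-1})v$ combined with \eqref{eq:PropI}, which expresses each increment as a linear combination of the $\psi_{i,j}$ with $j \in J_i$. Second, I would identify the Riesz isomorphism of each one-dimensional $\U_{i,j}$. Writing $R_{i,j} g = c\,\psi_{i,j}$, the defining identity $\langle R_{i,j} g,\psi_{i,j}\rangle_{\U_{i,j}} = g(\psi_{i,j})$ becomes $h_{i,j}^{-4s}\,c\,\|\psi_{i,j}\|_{L^2(Q)}^2 = g(\psi_{i,j})$, yielding precisely the formula stated in the corollary.

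With these identifications in place, Theorem~\ref{thm:1} immediately gives that $G_L = \sum_{i,j} E_{i,j} R_{i,j} E_{i,j}' \in \Lis(V_L',V_L)$ and that $(G_L^{-1}\bigcdot)(\bigcdot)^{1/2}$ on $V_L$ is sandwiched between the infima and suprema of the ratio of the induced triple-bar norm to $\|\bigcdot\|_{\cH^{2s,s}(Q)}$. The key observation is that the abstract triple-bar norm of Theorem~\ref{thm:1}, specialized to the present data, coincides exactly with $\nrm\bigcdot\nrm_{L,s}$ as defined in Theorem~\ref{thm:normEqui}: both are infima over decompositions $v = \sum_{i,j} v_{i,j}$ with $v_{i,j} \in \Span\{\psi_{i,j}\}$ of the sum $\sum_{i,j} h_{i,j}^{-4s}\|v_{i,j}\|_{L^2(Q)}^2$.

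Finally, I would invoke Theorem~\ref{thm:normEqui} to conclude $\nrm v\nrm_{L,s}^2 \eqsim \|v\|_{\cH^{2s,s}(Q)}^2$ uniformly in $L$ and $v \in V_L$, and combine with the sandwich above. I do not anticipate a real obstacle here; the only point deserving care is the bookkeeping that matches the infimum defining the triple-bar norm in Theorem~\ref{thm:1} with that of Theorem~\ref{thm:normEqui}, and the verification of the one-dimensional Riesz isomorphism formula, both of which are routine.
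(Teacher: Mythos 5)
Your proposal is correct and follows exactly the paper's intended argument: the paper states the corollary as a direct consequence of combining Theorem~\ref{thm:1} (applied with the one-dimensional spaces $\Span\{\psi_{i,j}\}$ equipped with the scaled $L^2(Q)$ norms, whose Riesz maps are precisely the $R_{i,j}$) with the norm equivalence of Theorem~\ref{thm:normEqui}, which is what you do. The identification of the abstract triple-bar norm with $\nrm\bigcdot\nrm_{L,s}$ and the spanning property via the telescoping Scott--Zhang decomposition are exactly the bookkeeping the paper relies on.
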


\section{Solving the problem}\label{sec:SolvProb}
Recall the preconditioner $G_h$ defined in the previous section and the operator $B$ defined in \eqref{eq:defB}. We define the operator $A_h\in \Lis(U_h,U_h')$ and the right-hand side $F \in U_h'$ for all  $u_h, w_h \in U_h$ by
\begin{equation*}
\begin{aligned}
(A_h u_h) (w_h) &\coloneqq (Bw_h) (G_h B u_h) + \langle u_h(0),w_h(0)\rangle_\Omega,\\
F_{h}(w_h) &\coloneqq  (Bw_h) (G_h f) + \langle u_0,w_h(0)\rangle_\Omega. 
\end{aligned}
\end{equation*}
The problem in \eqref{eq:system} then reads
\begin{equation}\label{eq:MatrixProb}
A_h u_h = F_h.
\end{equation}  
Thanks to \eqref{eq:compl}, a recursive implementation of $G_h$ that makes use of the nesting of the partitions $\tria_i$ can be performed in linear complexity. Yet, this operator has no sparse representation with respect to common finite element bases.
In order to solve \eqref{eq:MatrixProb} we apply the (Preconditioned) Conjugate Gradient ((P)CG) method.
\subsection{(P)CG-scheme}\label{subsec:PCGscheme}
Knowing that $(A_h w_h)(w_h) \eqsim \langle w_h , w_h \rangle_U$ for $w_h \in U_h$, in order to precondition the CG-scheme, we seek to employ an efficiently applicable preconditioner $K_h\in \Lis(U_h',U_h)$ with the property that $(K_h^{-1} \bigcdot)(\bigcdot)$ is an inner product on $U_h \times U_h$ with norm equivalence
\begin{equation}\label{eq:EquiKh}
\langle w_h , w_h \rangle_U \eqsim (K_h^{-1}w_h)(w_h) \qquad\text{for all }w_h \in U_h. 
\end{equation} 
Indeed, with the spectral condition number $\kappa_h$ of $K_h A_h$, being the quotient of the maximum and minimum of 
\begin{equation*}
\frac{(A_h w_h)(w_h)}{(K_h^{-1} w_h)(w_h)}\quad \text{over}\quad w_h \in U_h,
\end{equation*}
it is known that after $i$ PCG iterations an initial error is reduced in the $(A_h \bigcdot)(\bigcdot)^{1/2}$-norm with at least a factor $(2 \kappa_h^i)/(1+\kappa_h^{2i})$.

When the space $U$ would be equipped with the norm in $W$, cf.~\eqref{eq:defNormW}, the construction in Subsection~\ref{subsec:AddSubspaceCorr} of the optimal multi-level preconditioner on $W_{h,0}^{p_t,p_x} \cap V$, directly extends by removing the homogeneous Dirichlet boundary condition at end time $t=T$.
However, the norm $\norm{\bigcdot}_U$ in $U_h$ is slightly stronger. As we have seen in Remark~\ref{rem:remmie}, we have the continuous embeddings
$L^2(\cJ,H^1_0(\Omega)) \cap H^{1/2+\varsigma}(\cJ;L^2(\Omega)) \hookrightarrow U \hookrightarrow W$  for any fixed $\varsigma>0$.
A standard inverse inequality shows with $h_{t,\min}\coloneqq \min_{K \in \tria} h_{K,t}$ for all $w_h \in U_h$ that 
\begin{equation*}
\|w_h\|_{L^2(\cJ,H^1_0(\Omega)) \cap H^{1/2+\varsigma}(\cJ;L^2(\Omega))} \lesssim h_{t,\min}^{-\varsigma} \|w_h\|_W.
\end{equation*}
Hence, we obtain the estimate
\begin{equation} \label{eq:inequalities}
\|w_h\|_W \lesssim \|w_h\|_U \lesssim h_{t,\min}^{-\varsigma} \|w_h\|_W.
\end{equation}
This estimate leads to the upper bound
\begin{equation*}
\kappa_h \lesssim h_{t,\min}^{-2\varsigma}.
\end{equation*}
The performance of this effective, though suboptimal, preconditioner is examined in our numerical experiment displayed in Section~\ref{subsec:ExpPreCond} below.
\begin{remark}[Homogeneous initial data]
If $u_0 =0$, the variational formulation outlined in Section~\ref{subsec:FracSetting} can be employed using the trial space \( U_F \) and the test space \( V=V_F \). 
The construction of the optimal preconditioner for $V_h \subset V_F$ from Sect.~\ref{sec:preCond} equally well applies to $U_h \subset U_F$ by moving the homogeneous boundary condition at end time $t=T$, which is incorporated in the definition of $V_F$, to the initial time $t=0$, and thus yields an optimal preconditioner for $U_h$.

Since initial data $u_0 \in H^1_0(\Omega)$ has an easy (bounded) extension to a function in $U$, the heat problem for such an initial condition can be reduced to a heat problem with a homogeneous initial condition. 
\end{remark}
%

\subsection{A posteriori error control}\label{subsec:Aposteriori}
A notable advantage of minimal residual methods is their built-in error control. Specifically, the error is equivalent to the computable residual, supplemented by a data approximation error, see for example \cite[Sec.~3.4]{MonsuurStevensonStorn23}. For our method this leads with solution $u\in U$ to \eqref{eq:Heat}, any approximation $u_h \in U_h$, and the Fortin operator $F \colon V \to V_h$ in Theorem~\ref{thm:FortinOperator} to the estimate
\begin{equation}\label{eq:Aposteriori}
\lVert u - u_h \rVert_U^2 \eqsim (Bu_h-f) (G_h(Bu_h-f)) + \lVert u_h(0) - u_0\rVert^2_{L^2(\Omega)} + \lVert f \circ (\identity - F)\rVert^2_{V'}.
\end{equation} 
The first two terms on the right-hand side can be computed.
The following lemma, involving the $L^2(K)$ orthogonal projection $P_K\colon L^2(K) \to \mathbb{P}_{p_t-1,p_x}(K) \oplus \mathbb{P}_{p_t,p_x-2}(K)$ for all $K\in \tria$, allows us to bound the third term by some higher-order data oscillation term.
\begin{lemma}[Data oscillation term]
For any right-hand side $f\in L^2(Q)$ one has
\begin{equation*}
\lVert f \circ (\identity - F)\rVert^2_{V'} \lesssim \sum_{K\in \tria} h_{K,x}^2 \lVert f - P_K f\rVert_{L^2(K)}^2.
\end{equation*}
\end{lemma}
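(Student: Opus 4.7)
The plan is to combine two ingredients: the elementwise $L^2$-orthogonality of $(\identity-F)v$ against $\mathbb{P}_{p_t-1,p_x}(K) \oplus \mathbb{P}_{p_t,p_x-2}(K)$, which is built into $F$ via the volume-bubble correction $\mathcal{C}_2$, and the local Fortin stability estimate
\begin{equation*}
\sum_{K\in\tria} h_{K,x}^{-2}\,\|(\identity-F)v\|_{L^2(K)}^2 \lesssim \|v\|_V^2
\end{equation*}
established as \eqref{eq:G} in the proof of Theorem~\ref{thm:FortinOperator}.

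First, I would unpack the definition of $F$ from Section~\ref{subsec:Fortin} to see that $(\identity-F)v$ equals $(\identity-\mathcal{C}_2)w$ where $w=\delta-\mathcal{C}_1\delta$ and $\delta=v-\mathcal{I}v$. The defining property \eqref{eq:DefC2} of the correction operator $\mathcal{C}_2$ then gives, for every $K\in\tria$ and every test polynomial $\theta\in\mathbb{P}_{p_t-1}(K_t)\otimes\mathbb{P}_{p_x}(K_x)+\mathbb{P}_{p_t}(K_t)\otimes\mathbb{P}_{p_x-2}(K_x)$, the orthogonality
\begin{equation*}
\int_K \theta\,(v-Fv)\dtx = 0.
\end{equation*}
Since $P_K f$ lies in precisely this space, it may be subtracted from $f$ element by element without changing the dual pairing.

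Second, for any $v\in V$ I would rewrite
\begin{equation*}
f\big((\identity-F)v\big) = \sum_{K\in\tria}\int_K (f-P_Kf)(v-Fv)\dtx,
\end{equation*}
then apply Cauchy--Schwarz on each element and, after inserting the weights $h_{K,x}$ and $h_{K,x}^{-1}$, apply Cauchy--Schwarz on the sum to obtain
\begin{equation*}
|f((\identity-F)v)| \leq \Big(\sum_{K\in\tria} h_{K,x}^2\,\|f-P_Kf\|_{L^2(K)}^2\Big)^{1/2}\Big(\sum_{K\in\tria} h_{K,x}^{-2}\,\|(\identity-F)v\|_{L^2(K)}^2\Big)^{1/2}.
\end{equation*}

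Third, the second factor is controlled by $\|v\|_V$ thanks to \eqref{eq:G}. Taking the supremum over $v\in V$ with $\|v\|_V=1$ then yields the claimed bound on $\|f\circ(\identity-F)\|_{V'}^2$.

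I do not expect a serious obstacle here: the only delicate point is checking that the bubble-correction structure of $F$ really does leave the remainder $L^2$-orthogonal to the stated polynomial space on every element. Once that is in hand the rest is a weighted Cauchy--Schwarz argument combined with the Fortin stability bound already proven in Theorem~\ref{thm:FortinOperator}.
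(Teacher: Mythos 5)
Your proposal is correct and follows essentially the same route as the paper: insert $P_K f$ elementwise using the orthogonality built into $F$ through $\mathcal{C}_2$ (property \eqref{eq:DefC2}), apply a weighted Cauchy--Schwarz over the elements, and bound $\sum_{K} h_{K,x}^{-2}\|(\identity-F)v\|_{L^2(K)}^2$ by $\|v\|_V^2$ via the stability estimate \eqref{eq:G} from the proof of Theorem~\ref{thm:FortinOperator}. This is exactly the paper's argument.
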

\begin{proof}
Let $v\in V$. Then the design of the Fortin operator $F$ in Theorem~\ref{thm:FortinOperator} with the annihilation property in \eqref{eq:DefC2} yields
\begin{align*}
\int_Q f (v-F v) \dz &= \sum_{K\in \tria} \int_K (f-P_K f) (v-F v) \dz \\
&\leq  \Big(\sum_{K\in \tria} h_{K,x}^2 \lVert f - P_K f\rVert_{L^2(K)}^2 \Big)^{1/2} \Big(\sum_{K\in \tria} h_{K,x}^{-2} \lVert v - F v \rVert_{L^2(K)}^2 \Big)^{1/2}.
\end{align*}
The local approximation properties of the operator $F$ displayed in the proof of Theorem~\ref{thm:FortinOperator} show that the last contribution on the right-hand side is bounded by a multiple of $\lVert v \rVert_V$, concluding the proof.
\end{proof}
Suppose that $f\in L^2(Q)$ and define $w_h \coloneqq G_h(Bu_h-f)$. Then the localized contributions of the error estimator in \eqref{eq:Aposteriori}, without the data-oscillation term, read for all $K\in \tria$,
\begin{equation}\label{eq:LocErrorEst}
\begin{aligned}
\eta^2(u_h,K) \coloneqq \eta^2(K) &\coloneqq \int_K \partial_t u_h\, w_h \dz + \int_K \nabla_x u_h \cdot \nabla_x w_h\dz - \int_K fw_h \dz\\
&\quad  + \int_{K\cap \lbrace t = 0\rbrace } |u_h(0) - u_0|^2 \dx,
\end{aligned}
\end{equation}
that is, we have with $\eta^2(u_h,\tria) \coloneqq \eta^2(\tria)\coloneqq \sum_{K\in \tria}\eta^2(K)$ the equivalence
\begin{equation}\label{eq:defEta}
\lVert u - u_h \rVert_U^2 \eqsim \eta^2(\tria) + \lVert f \circ (\identity - F)\rVert^2_{V'}.
\end{equation}
Ignoring the data-oscillation term, we drive our adaptive scheme in Section~\ref{sec:NumExp} below by these local error indicators $\eta^2(u_h,K)$.
\begin{remark}[Negative error indicators]\label{rem:NegErrorInd}
Note that the error indicator $\eta^2(K)$ may take negative values for some $K\in \tria$. A resulting notable difference from typical adaptive schemes with non-negative error indicators is that bulk parameters $\theta$ close to or equal to one in our D\"orfler marking strategy might lead to the refinement of only a few elements. For example, if $( \eta^2(K) )_{K \in \tria} = (1, 0.1, 0.1, -0.1, -0.1)$ and $\theta = 1$, the smallest set $\mathcal{M}$ of simplices satisfying $\sum_{K \in \mathcal{M}} \eta^2(K) \leq \theta \sum_{K \in \tria} \eta^2(K)$ will contain only the element $K \in \tria$ with $\eta^2(K) = 1$. This appears indeed to be a problem in the numerical experiment in Section~\ref{subsec:ExpSmootSol} below.
We explored various modifications to enhance performance such as taking the absolute value $\tilde{\eta}^2(K) \coloneqq |\eta^2(K)|$. We found that the best results were achieved by discarding negative values, leading to the definition $\tilde{\eta}^2(K) \coloneqq \max\lbrace 0, \eta^2(K)\rbrace$. Addressing this heuristic adjustment through the introduction of localizable error estimators remains an avenue for future research.
\end{remark}
\subsection{Stopping criterion}\label{subsec:StoppingCriterion}
Our PCG-scheme computes iterates $(u_h^n)_{n\in \mathbb{N}_0} \subset U_h$ that converge to the solution $u_h \in U_h$ to the discretized problem in~\eqref{eq:MatrixProb}. To improve the efficiency of our computation, we want to stop the iterative scheme if our current iterate $u_h^n \in U_h$ satisfies
\begin{equation*}
\frac{\lVert u_h - u_h^n \rVert_U}{\lVert u - u_h^n \rVert_U} \leq 1/2,
\end{equation*}
since then the triangle inequality verifies the quasi-optimality result
\begin{equation*}
\lVert u - u_h^n \rVert_U \leq 2\, \lVert u - u_h \rVert_U \lesssim \min_{w_h\in U_h}\lVert u - w_h \rVert_U.
\end{equation*}
Due to \eqref{eq:Aposteriori} we can control the term $\lVert u - u_h^n \rVert^2_U$ by 
\begin{equation*}
\begin{aligned}
\eta^2(u_h^n,\tria) + \lVert f \circ (\identity - F)\rVert^2_{V'} \eqsim \lVert u - u_h^{n} \rVert^2_U.
\end{aligned}
\end{equation*}

Pretending that our preconditioner $K_h$ satisfies the norm equivalence~\eqref{eq:EquiKh} (which is only nearly true\footnote{Taking into account \eqref{eq:inequalities}, \eqref{eq:Defdres} will read as $1 \lesssim \frac{\mathtt{alg\_est}(u_h^n)^2}{\lVert u_h - u_h^n \rVert_U^2} \lesssim h_{t,\min}^{-2\varsigma}$}, we can use it to estimate the norm of the algebraic error $\lVert u_h - u_h^n \rVert_U$ as follows:
The definition of the inner product $(K_h^{-1}\bigcdot)(\bigcdot)$ on $U_h \times U_h$ shows that $K_h$ is the inverse Riesz map associated to the Hilbert space $\big(U_h,(K_h^{-1}\bigcdot)(\bigcdot)\big)$. Indeed, we have $g(\bigcdot)=(K_h^{-1}K_h g)(\bigcdot)$ for all $g \in U_h'$.
From the Riesz map being an isometric isomorphism, we infer that
\begin{equation*}
\sup_{0 \neq v \in U_h}\frac{g(v)^2}{(K_h^{-1}v)(v)}=(K_h^{-1} K_h g)(K_h g).
\end{equation*}
Hence, by substituting $g=A_h w$, we obtain
\begin{equation*}
(A_h w)(K_h A_h w) =\sup_{0 \neq v \in U_h}\frac{(A_h w)(v)^2}{(K_h^{-1}v)(v)} \eqsim \sup_{0 \neq v \in U_h}\frac{(A_h w)(v)^2}{(A_h v)(v)}= (A_h w)(w) \eqsim \|w\|_U^2.
\end{equation*}
For $w=u_h - u_h^n$, this yields the computable error estimator
\begin{equation}\label{eq:Defdres}
\lVert u_h - u_h^n \rVert_U^2 \eqsim (F_h-A_h u_h^n)(K_h(F_h-A_h u_h^n))\eqqcolon \mathtt{alg\_est}(u_h^n)^2.
\end{equation}%
We thus terminate the PCG-scheme if the current iterate $u_h^n\in U_h$ satisfies for some fixed positive value $\varepsilon \ll 1$ the stopping criterion 
\begin{equation}\label{eq:StoppCrit}
\frac{\lVert u_h - u_h^n \rVert^2_U}{\lVert u - u_h^n \rVert^2_U} \eqsim \frac{\mathtt{alg\_est}(u_h^n)^2}{\eta^2(u_h^n,\tria) + \lVert f \circ (\identity - F)\rVert^2_{V'} } \leq \varepsilon.
\end{equation}

\section{Numerical experiments}\label{sec:NumExp}
We conclude our investigation by conducting a numerical study of the proposed method. We use the ansatz space $U_h = W_h^{p_t, p_x}$, as defined in \eqref{eq:DefW_h0ptpx}, with polynomial degrees $p_t = 1$ and $p_x = 1$ or $p_x = 3$, where the latter choice follows the suggestion in Remark~\ref{rem:1}.
Our test spaces read $V_h = W_h^{p_t+2, p_x+3} \cap V$, which includes the space defined in \eqref{eq:testSpace} verifying discrete inf-sup stability.
Unlike for the space in \eqref{eq:testSpace}, this definition of $V_h$ leads to nested spaces under mesh refinement, allowing for the application of the additive  subspace correction in Section~\ref{subsec:AddSubspaceCorr} as preconditioner without exploiting the split discussed in Section~\ref{subsec:Splitting}. Our implementation, however, uses the split $V_h = V_{h,0} \oplus \mathcal{B}_h$ with $\mathcal{B}_h = \lbrace v_h \in V_h\colon v_h(j) = 0\text{ for all vertices } j \text{ in }\tria\rbrace$ and the corresponding preconditioner in \eqref{eq:SplittedPrecond}.

We employ the PCG-scheme detailed in Section~\ref{subsec:PCGscheme}, using the stopping criterion from \eqref{eq:StoppCrit} with a tolerance of $\varepsilon = 0.01$.
The initial iterate $u_h^0$ equals zero on the initial mesh and equals the final iterate from the previous coarser mesh for refined meshes.
Our adaptive mesh refinement scheme is driven by the error estimator $\eta^2(\tria) = \sum_{K\in \tria}\eta^2(K)$ in \eqref{eq:LocErrorEst} and the D\"orfler marking strategy with bulk parameter $\theta = 0.5$.

\subsection{Smooth solution}\label{subsec:ExpSmootSol}
Our first experiment investigates the equivalence of the computable residual $\eta(\tria) \coloneqq \eta^2(\tria)^{1/2}$ and the error. Since we cannot directly evaluate the error in the norm $\lVert \bigcdot \rVert_U$, we instead focus on the contribution $\lVert \nabla_x (u-u_h) \rVert_{L^2(Q)} \leq \sqrt{2}\, \lVert u-u_h \rVert_U$.
The domain $\Omega = (0,1)^2$ is the unit square and the time interval equals $\cJ = (0,1)$. The exact solution is given by
\begin{equation*}
u(t,x,y) = t(x-x^2)(y-y^2) \qquad \text{for all } (t,x,y) \in Q = \cJ \times \Omega.
\end{equation*}
We solve the problem using both uniform and adaptive mesh refinements. The corresponding convergence history is depicted in Figure~\ref{fig:smoothSol}, demonstrating a favorable efficiency index for the residual $\eta(\tria)$.

As noted in Remark~\ref{rem:1}, we anticipate convergence rates of $\mathcal{O}(\textup{ndof}^{-1/4})$ and $\mathcal{O}(\textup{ndof}^{-3/4})$ for $p_x = 1$ and $p_x = 3$, respectively. The convergence plots confirm this expected rate for $p_x=3$. For $p_x=1$ the rate is slightly worse, which might be caused by the fact that the equivalence constants in $(G_h^{-1} \bigcdot)(\bigcdot) \eqsim \|\bigcdot\|_V^2$ on $V_h$, although being uniformly bounded, are still pre-asymptotically dependent on the underlying meshes. In particular multi-level preconditioners are known to be better on coarser meshes than on finer ones, although the difference `saturates' when the meshes get finer. 

The adaptive refinement strategy exhibits challenges for $p_x = 3$ 
in the sense that in each adaptive loop only a small number of elements is marked for refinement. 
As discussed in Remark~\ref{rem:NegErrorInd}, this is due to negative contributions from the error estimator, resulting in the small number of elements being refined. This observation suggests the potential need for either an alternative error estimator or a modification of the marking strategy.
\begin{figure}
\begin{tikzpicture}
\begin{axis}[
clip=false,
width=.5\textwidth,
height=.45\textwidth,
ymode = log,
xmode = log,
cycle multi list={\nextlist MyColors},
scale = {1},
clip = true,
legend cell align=left,
legend style={legend columns=1,legend pos= south west,font=\fontsize{7}{5}\selectfont}
]
	\addplot table [x=ndof,y=res] {experiments/new/ST_KnownSol_unif_px_1.txt};
	\addplot table [x=ndof,y=error] {experiments/new/ST_KnownSol_unif_px_1.txt};
	\addplot table [x=ndof,y=res] {experiments/new/ST_KnownSol_adapt_px_1.txt};
	\addplot table [x=ndof,y=error] {experiments/new/ST_KnownSol_adapt_px_1.txt};
	\legend{{$\eta(\tria)$},{$\lVert \nabla_x (u{-}u_h)\rVert_{L^2(Q)}$}};
	\addplot[dashed, sharp plot,update limits=false] coordinates {(1e1,6e-2) (1e5,6e-3)};
\end{axis}
\end{tikzpicture}
\begin{tikzpicture}
\begin{axis}[
clip=false,
width=.5\textwidth,
height=.45\textwidth,
ymode = log,
xmode = log,
cycle multi list={\nextlist MyColors},
scale = {1},
clip = true,
legend cell align=left,
legend style={legend columns=1,legend pos= south west,font=\fontsize{7}{5}\selectfont}
]
	\addplot table [x=ndof,y=res] {experiments/new/ST_KnownSol_unif_px_3.txt};
	\addplot table [x=ndof,y=error] {experiments/new/ST_KnownSol_unif_px_3.txt};
	\addplot table [x=ndof,y=res] {experiments/new/ST_KnownSol_adapt_px_3.txt};
	\addplot table [x=ndof,y=error] {experiments/new/ST_KnownSol_adapt_px_3.txt};	
	\legend{{$\eta(\tria)$},{$\lVert \nabla_x (u-{u}_h)\rVert_{L^2(Q)}$}};
	\addplot[dashdotted, sharp plot,update limits=false] coordinates {(1e2,3e-3) (1e6,3e-6)};
\end{axis}
\end{tikzpicture}
\caption{Convergence of the residual $\eta(\tria)$ and the error $\lVert \nabla_x (u-u_h)\rVert_{L^2(Q)}$ plotted against $\textup{ndof} \coloneqq \dim U_h$ with uniform (solid line) and adaptive (dotted line) mesh refinements with polynomial degree $p_x = 1$ (left) and $p_x = 3$ (right) in the experiment in Section~\ref{subsec:ExpSmootSol} (Smooth solution). The dashed line (left) indicates the slope $\textup{ndof}^{-3/4}$ and the dash-dotted line (right) the slope $\textup{ndof}^{-1/4}$.}\label{fig:smoothSol}
\end{figure}
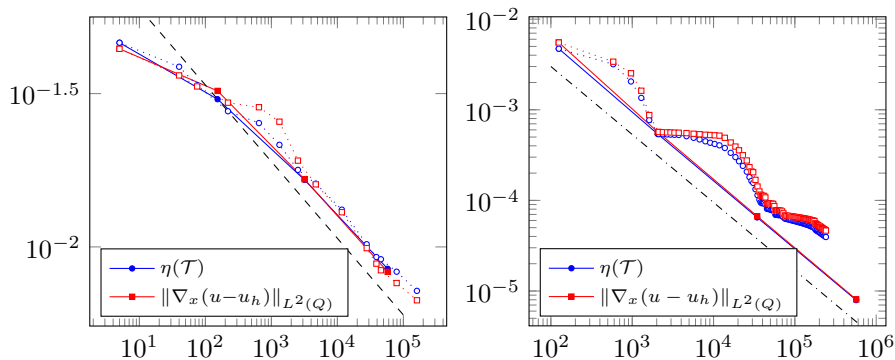
\subsection{L-shaped domain}\label{subsec:ExpLShape}
Our second experiment addresses the heat equation on the L-shaped domain $\Omega = (-1,1)^2 \setminus [0,1)^2$ over the time interval $\cJ = (0,1)$, with constant right-hand side $f = 1$ and initial condition $u_0 = 0$. The resulting convergence history of the residual $\eta(\tria)$ is shown in Figure~\ref{fig:ExpLShape}. Interestingly, the adaptive refinement strategy for $p_x = 1$ yields the same convergence rate as the uniform mesh refinement, specifically $\textup{ndof}^{-1/5}$. In fact, the meshes generated by adaptive refinement (not shown in this paper) appear to be nearly uniform. 
For $p_x = 3$, we achieve a much better approximation. The adaptively refined meshes, displayed in Figure~\ref{fig:ExpLshapeMesh}, capture the corner singularity at the origin of $\Omega$, leading to an improved convergence rate that appears to behave better than $\textup{ndof}^{-1/3}$.

\begin{figure}
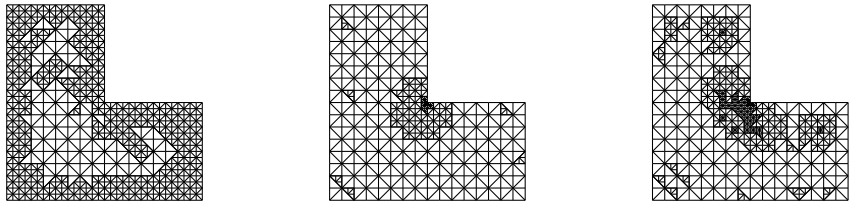

{\centering
\hbox{
\begin{minipage}{.03\textwidth}
\ 
\end{minipage}
\begin{minipage}{0.33\textwidth}
\include{experiments/RefLShapeT0.tex}
\end{minipage}
\begin{minipage}{0.33\textwidth}
\include{experiments/RefLShapeThalf.tex}
\end{minipage}
\begin{minipage}{0.33\textwidth}
\include{experiments/RefLShapeTend.tex}
\end{minipage}
}}
\vspace*{-.8cm}
\caption{Meshes at $t = 0$ (left), $t=0.5$ (center), and $t=1$ (right) at level $\ell = 17$ ($\textup{ndof} = 752\, 258$) of the adaptive mesh refinement routine with $p_x = 3$.}\label{fig:ExpLshapeMesh}
\end{figure}

\begin{figure}
\begin{tikzpicture}
\begin{axis}[
clip=false,
width=.7\textwidth,
height=.45\textwidth,
ymode = log,
xmode = log,
cycle multi list={\nextlist MyColors},
scale = {1},
clip = true,
legend cell align=left,
legend style={legend columns=1,legend pos= north east,font=\fontsize{7}{5}\selectfont}
]
	\addplot table [x=ndof,y=res] {experiments/new/ST_Lshape_unif_px1.txt};
	\addplot table [x=ndof,y=res] {experiments/new/ST_Lshape_unif_px3.txt};
	\addplot table [x=ndof,y=res] {experiments/new/ST_Lshape_adapt_px1.txt};
	\addplot table [x=ndof,y=res] {experiments/new/ST_Lshape_adapt_px3.txt};
	\addplot[dashdotted, sharp plot,update limits=false] coordinates {(2e1,2e-1) (2e6,2e-2)};
	\addplot[dashed, sharp plot,update limits=false] coordinates {(1e3,2.5e-2) (1e6,2.5e-3)};
	\legend{{$p_x=1$},{$p_x=3$}};
\end{axis}
\end{tikzpicture}
\caption{Convergence history plot of the residual $\eta(\tria)$ in the experiment of Section~\ref{subsec:ExpLShape} (L-shaped domain) plotted against $\textup{ndof} \coloneqq \dim U_h$ with uniform (solid line) and adaptive (dotted line) mesh refinements. The dash-dotted line indicates the slop $\textup{ndof}^{-1/5}$ and the dashed line indicates the slope $\textup{ndof}^{-1/3}$.}\label{fig:ExpLShape}
\end{figure}
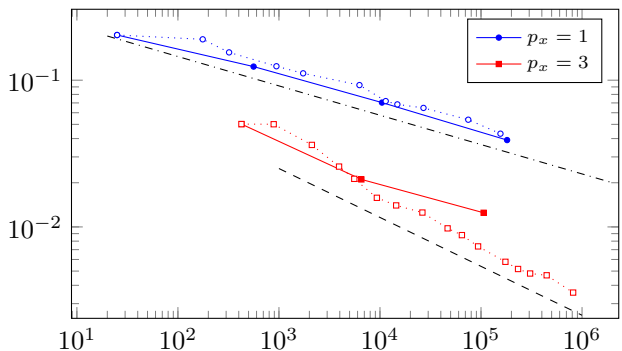
\subsection{Rough initial data}\label{subsec:ExpRoughInit}
This experiment examines the approximation of the heat equation with homogeneous right-hand side $f = 0$ and non-matching initial data $u_0 = 1$, defined over the domain $\Omega = (0,1)^2$ and time interval $\cJ = (0,1)$. The solution $u$ exhibits a pronounced singularity at the boundary $\partial \Omega$ at initial time $t=0$. 
Existing adaptive time-space methods face considerable challenges with this benchmark problem. Specifically, the adaptive schemes proposed in \cite{FuehrerKarkulik21} yield convergence rates of $\textup{ndof}^{-0.07}$ and $\textup{ndof}^{-0.08}$ for uniform and adaptive mesh refinements, respectively. Similarly, the scheme in \cite{GantnerStevenson23} achieve rates of $\textup{ndof}^{-0.09}$ and $\textup{ndof}^{-0.14}$ for uniform and adaptive refinements, respectively. 
In contrast, the convergence history shown in Figure~\ref{fig:ExpInitOne} demonstrates a remarkable improvement using our novel approach.
This illustrates the advantages when using parabolic scaling and furthermore indicates that the norms used in the papers mentioned before are too strong for these kind of highly singular solutions. We obtain convergence rates of $\textup{ndof}^{-1/8}$ and $\textup{ndof}^{-1/4}$ for $p_x = 1$ with uniform and adaptive refinements, respectively, and rates of $\textup{ndof}^{-1/8}$ and $\textup{ndof}^{-2/5}$ for $p_x = 3$. Notably, the rate achieved by the adaptive scheme for $p_x=1$ appears to be optimal.

\tdplotsetmaincoords{120}{40} 
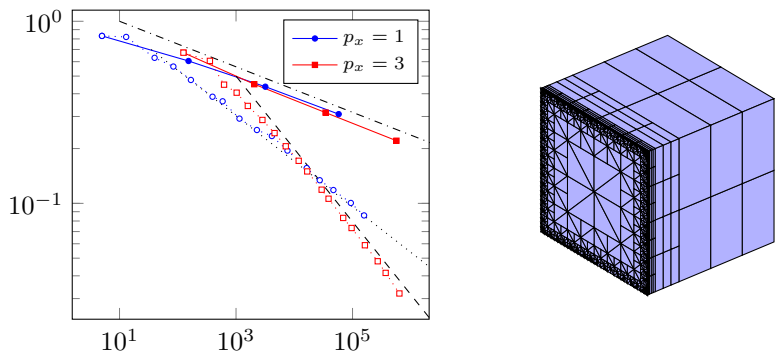
\begin{figure}
\begin{subfigure}[b]{0.5\textwidth}
\begin{tikzpicture}
\begin{axis}[
clip=false,
width= 1\textwidth,
height=.9\textwidth,
ymode = log,
xmode = log,
cycle multi list={\nextlist MyColors},
scale = {1},
clip = true,
legend cell align=left,
legend style={legend columns=1,legend pos= north east,font=\fontsize{7}{5}\selectfont}
]
	\addplot table [x=ndof,y=res] {experiments/new/ST_InitOne_unif_px1.txt};
	\addplot table [x=ndof,y=res] {experiments/new/ST_InitOne_unif_px3.txt};
	\addplot table [x=ndof,y=res] {experiments/new/ST_InitOne_adapt_px1.txt};
	\addplot table [x=ndof,y=res] {experiments/new/ST_InitOne_adapt_px3.txt};
	\addplot[dashdotted, sharp plot,update limits=false] coordinates {(1e1,1e0) (1e9,1e-1)};
	\addplot[dashed, sharp plot,update limits=false] coordinates {(1e3,5e-1) (1e8,5e-3)};
	\addplot[dotted, sharp plot,update limits=false] coordinates {(3.5e1,7e-1) (3.5e9,7e-3)};
		\legend{{$p_x=1$},{$p_x=3$}};
\end{axis}
\end{tikzpicture}
\end{subfigure}
\begin{subfigure}[b]{0.05\textwidth}
\ 
\end{subfigure}
\begin{subfigure}[b]{0.4\textwidth}
\include{experiments/corrected/MeshInitOne.tex}
\end{subfigure}
\caption{Left. Convergence history of the residual $\eta(\tria)$ plotted against $\textup{ndof} = \dim U_h$ for the experiment in Section~\ref{subsec:ExpRoughInit} (Rough initial data) with uniform (solid) and adaptive (dotted) refinements. The dashed line indicates the rate $\textup{ndof}^{-2/5}$, the dash-dotted line $\textup{ndof}^{-1/8}$, and the dotted line $\textup{ndof}^{-1/4}$.\\
Right. The surface of the mesh resulting from the experiment with $p_x = 3$ and $\textup{ndof} = 13\, 688$.}\label{fig:ExpInitOne}
\end{figure}

\subsection{Fundamental solution}\label{subsec:expFundSol}
This experiment aims at the approximation of the fundamental solution to the heat equation 
\begin{equation}\label{eq:FundSol}
u(t,x) = \frac{1}{4\pi (t+t_s)} \exp\left(-\frac{|x|}{4(t+t_s)}\right)   \qquad\text{with time shift }t_s \coloneqq 10^{-3}.
\end{equation}
We included the time shift $t_s$ to obtain initial data $u_0 \coloneqq u(0,\bigcdot)$ in $L^2(\Omega)$. 
The right-hand side reads $f =0$ and we use the inhomogeneous Dirichlet boundary condition dictated by \eqref{eq:FundSol} on the lateral boundary of the time-space cylinder $Q = \cJ \times \Omega$ with $\cJ = (0,1)$ and $\Omega = (-1,1)^2$. Figure~\ref{fig:fundSol} displays the resulting convergence history plot of the residuals. As for the experiment in Section~\ref{subsec:ExpSmootSol}, our marking strategy causes the refinement of solely a few elements. We remedied this by using the contribution $\tilde{\eta}^2(K) \coloneqq \max\lbrace \eta^2(K),0\rbrace$ for all $K\in \tria$ as refinement indicator in the D\"orfler marking strategy, leading to residuals illustrated by dashed lines.

The adaptive refinement leads to significantly improved results. However, the rate for $p_x=1$ seems to be slightly worse that $\textup{ndof}^{-1/4}$ which should be the optimal rate. This can be a pre-asymptotic effect caused by the fact that the quadrature of the initial data $u_0$ improves as the mesh is refined. For $p_x =3$ the rate of convergence seems to be approximately twice as good, which is similar to the rate in the experiment of Section~\ref{subsec:ExpRoughInit}.
 
\begin{figure}
\begin{tikzpicture}
\begin{axis}[
clip=false,
width=.7\textwidth,
height=.45\textwidth,
ymode = log,
xmode = log,
cycle multi list={\nextlist MyColors},
scale = {1},
clip = true,
legend cell align=left,
legend style={legend columns=1,legend pos= outer north east,font=\fontsize{7}{5}\selectfont}
]

	\addplot table [x=ndof,y=res] {experiments/new/ST_Dirac_unif_px1.txt};
	\addplot table [x=ndof,y=res] {experiments/new/ST_Dirac_unif_px3.txt};
	\addplot table [x=ndof,y=res] {experiments/new/ST_Dirac_adapt_px1.txt};
	\addplot table [x=ndof,y=res] {experiments/new/ST_Dirac_adapt_px3.txt};
	\addplot table [x=ndof,y=res] {experiments/new/ST_Dirac_adapt_noZeros_px1.txt};		
	\addplot table [x=ndof,y=res] {experiments/new/ST_Dirac_adapt_noZeros_px3.txt};	
	\addplot[dashed, sharp plot,update limits=false] coordinates {(1e3,2e0) (1e7,2e-1)};	
	\addplot[dotted, sharp plot,update limits=false] coordinates {(1e3,1e0) (1e7,1e-2)};	
	
	\legend{{$p_x=1$},{$p_x=3$}};
	
\end{axis}
\end{tikzpicture}
\caption{Convergence history plot of the residual $\eta(\tria)$ in the experiment of Section~\ref{subsec:expFundSol} (Fundamental solution) with uniform (solid) and adaptive refinements with error indicator $\eta^2(K)$ (dotted) and $\tilde{\eta}^2(K) \coloneqq \max\lbrace 0 ,\eta^2(K)\rbrace$ (dashed). The dashed line indicates the rate $\textup{ndof}^{-1/4}$ and the dotted $\textup{ndof}^{-1/2}$.}\label{fig:fundSol}
\end{figure}
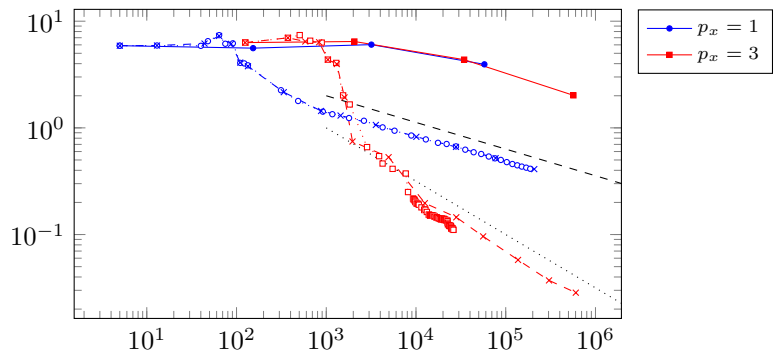

\subsection{PCG-scheme}\label{subsec:ExpPreCond}
We conclude our numerical study by analyzing the number of iterations required by the preconditioned conjugate gradient method with iterates $(u_h^n)_{n\in \mathbb{N}_0} \subset U_h$. 
Figure~\ref{fig:nrCGiter} illustrates the number of PCG-iterations needed for the computations presented in Section~\ref{subsec:ExpLShape} and Section~\ref{subsec:ExpRoughInit} with initial value $u_h^0$ being the solution on the previous mesh. For uniform mesh refinements, the results indicate that the number of PCG-iterations remains uniformly bounded or, at most, grows very slowly. In contrast, the number of PCG-iterations under the adaptive refinement strategy appears to grow. This behavior may be a pre-asymptotic effect, since multi-level preconditioners are known to be better on coarser meshes than on finer ones. Recalling that our preconditioner $G_h\colon V_h \rightarrow V_h'$  is defined as $E_{h,0} G_{h,0} E_{h,0}'+E_{h,1} G_{h,1} E_{h,1}'$, a possible remedy of this pre-asympotic effect can be a proper scaling the multi-level preconditioner $G_{h,0}$. Indeed, the dashed line in Figure~\ref{fig:nrCGiter} displays the number of PCG-iterations when $G_{h,0}$ is replaced by its weighted version $0.1\cdot G_{h,0}$, leading in the experiment of Section~\ref{subsec:ExpLShape} with $p_x=3$ to a seemingly uniformly bounded number of iterations.
\begin{figure}
\begin{tikzpicture}
\begin{axis}[
clip=false,
width= .5\textwidth,
height=.45\textwidth,
ymode = log,
xmode = log,
cycle multi list={\nextlist MyColors},
scale = {1},
clip = true,
legend cell align=left,
legend style={legend columns=1,legend pos= north west,font=\fontsize{7}{5}\selectfont}
]
	\addplot table [x=ndof,y=nrCGiter] {experiments/corrected/ST_Lshape_unif_px1.txt};
	\addplot table [x=ndof,y=nrCGiter] {experiments/corrected/ST_Lshape_unif_px3.txt};
		\addplot table [x=ndof,y=nrCGiter] {experiments/corrected/ST_Lshape_adapt_px1.txt};
	\addplot table [x=ndof,y=nrCGiter] {experiments/corrected/ST_Lshape_adapt_px3.txt};
		\addplot table [x=ndof,y=nrCGiter] {experiments/corrected/ST_Lshape_scaled_adapt_px1.txt};
	\addplot table [x=ndof,y=nrCGiter] {experiments/corrected/ST_Lshape_scaled_adapt_px3.txt};	

		\legend{{$p_x=1$},{$p_x=3$}};
\end{axis}
\end{tikzpicture}
\begin{tikzpicture}
\begin{axis}[
clip=false,
width= .5\textwidth,
height=.45\textwidth,
ymode = log,
xmode = log,
cycle multi list={\nextlist MyColors},
scale = {1},
clip = true,
legend cell align=left,
legend style={legend columns=1,legend pos= north west,font=\fontsize{7}{5}\selectfont}
]
	\addplot table [x=ndof,y=nrCGiter] {experiments/new/ST_InitOne_unif_px1.txt};
	\addplot table [x=ndof,y=nrCGiter] {experiments/new/ST_InitOne_unif_px3.txt};
		\addplot table [x=ndof,y=nrCGiter] {experiments/new/ST_InitOne_adapt_px1.txt};
	\addplot table [x=ndof,y=nrCGiter] {experiments/new/ST_InitOne_adapt_px3.txt};		
	\addplot table [x=ndof,y=nrCGiter] {experiments/new/ST_InitOne_scaled_adapt_px1.txt};
	\addplot table [x=ndof,y=nrCGiter] {experiments/new/ST_InitOne_scaled_adapt_px3.txt};	

		\legend{{$p_x=1$},{$p_x=3$}};
\end{axis}
\end{tikzpicture}
\caption{Number of PCG-iterations needed to satisfy the stopping criterion in the experiment in Section~\ref{subsec:ExpLShape} (left) and Section~\ref{subsec:ExpRoughInit} (right) with uniform (solid line) and adaptive (dotted line) mesh refinements. The dashed line results from adaptive mesh refinements and the replacement of the multi-level preconditioner $G_{h,0}$ by a scaled version $0.1\cdot G_{h,0}$.}\label{fig:nrCGiter}
\end{figure}
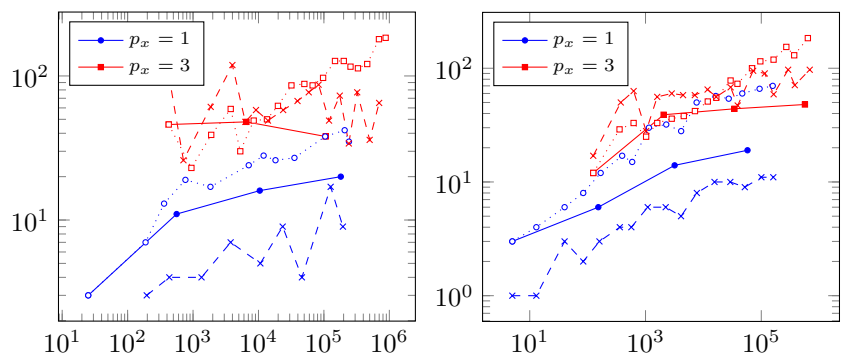


\printbibliography

\end{document}